\newtheorem{proposition}{Proposition}[section]
\newtheorem{theorem}{Theorem}[section]
\newtheorem{lemma}{Lemma}[section]
\newtheorem{remark}{Remark}[section]
\newtheorem{definition}{Definition}[section]
\newcommand{\aast}{a^{\ast}}
\newcommand{\Btensor}{\mathcal{B}}
\newcommand{\dell}{\partial}
\newcommand{\dellmat}{\dell^{\bullet}}
\DeclareMathOperator{\dist}{dist}
\DeclareMathOperator{\supp}{supp}
\newcommand{\diff}{\frac{\d}{\d t}}
\newcommand{\Ga}{\Gamma}
\newcommand{\Gat}{\Gamma(t)}
\newcommand{\GT}{\mathcal{G}_T}
\newcommand\laplace{\Delta}
\newcommand{\nbg}{\nabla_{\Gamma}}
\newcommand{\nbgh}{\nabla_{\Gamma_h}}
\newcommand{\mat}{\partial^{\bullet}}
\newcommand{\co}{continuous}
\def \d {\mathrm{d}}
\newcommand{\disp}{\displaystyle}
\newcommand{\eps}{\varepsilon}
\newcommand{\N}{\mathbb{N}}
\newcommand{\nb}{\nabla}
\newcommand{\pa}{\partial}
\newcommand{\R}{\mathbb{R}}
\newcommand{\spn}{\textnormal{span}}
\newcommand{\st}{such that}
\def \t {(t)}
\newcommand{\Th}{\mathcal{T}_h}
\def \to {\rightarrow}
\newcommand{\vphi}{\varphi}
\DeclareMathOperator{\linspan}{lin}
\newcommand\Linfty[2][]{%
  \lVert #2\rVert_{%
    L^{\infty}%
    \ifx\\#1\\\else(#1)\fi%
  }%
}
\newcommand{\Linftyy}[2][]{%
  \lVert #2\rVert_{
    L^{\infty}%
    \ifthenelse{\isempty{#1}}{}{(#1)}
  }
}
\newcommand\Ltwo[2][]{%
  \lVert #2\rVert_{%
    L^{2}%
    \ifx\\#1\\\else(#1)\fi%
  }%
}
\newcommand\Lone[2][]{%
  \lVert #2\rVert_{%
    L^{1}%
    \ifx\\#1\\\else(#1)\fi%
  }%
}
\newcommand\abs[1]{ \lvert #1\rvert }
\newcommand\bigabs[1]{ \bigl\lvert #1\bigr\rvert }
\DeclareDocumentCommand{\wtL}{O{\alpha} O{} m}{
  \lVert #3\rVert_{L^{2},\ifthenelse{\isempty{#2}}{}{#2,}#1}
}
\DeclareDocumentCommand{\wtH}{O{\alpha} O{} m}{
  \lVert #3\rVert_{H^{1},\ifthenelse{\isempty{#2}}{}{#2,}#1}
}
\DeclareDocumentCommand{\BigwtH}{O{\alpha} O{} m}{
  \Bigl\lVert #3\Bigr\rVert_{H^{1},\ifthenelse{\isempty{#2}}{}{#2,}#1}
}
\DeclareDocumentCommand{\lrwtH}{O{\alpha} O{} m}{
  \left\lVert #3\right\rVert_{H^{1},\ifthenelse{\isempty{#2}}{}{#2,}#1}
}
\DeclareDocumentCommand{\wtHk}{O{k} O{\alpha} m}{
  \lVert #3\rVert_{H^{#1},#2}
}
\DeclareDocumentCommand{\Lnorm}{%
  O{} O{2} m%
}{\lVert #3\rVert_{%
    L^{#2}%
    \ifx\\#1\\\else(#1)\fi%
  }%
}
\DeclareDocumentCommand{\Wnorm}{%
  O{} O{1,\infty} m%
}{\lVert #3\rVert_{%
    W^{#2}%
    \ifx\\#1\\\else(#1)\fi%
  }%
}
\DeclareDocumentCommand{\Hnorm}{%
  O{} O{1} m%
}{\lVert #3\rVert_{%
    H^{#2}%
    \ifx\\#1\\\else(#1)\fi%
  }%
}
\newcommand\ddxi[2]{\frac{\partial #1}{\partial #2}}
\newcommand\ddti[2]{\frac{\mathrm{d}#1}{\mathrm{d}#2}}
\newcommand\green{\mathbf{G}}
\newcommand\surface{\Ga}
\newcommand\lb{\laplace_{\Ga}}
\newcommand\Lproj{P_{0}}
\newcommand\Rmap{P_{h,1}}
\newcommand\lRmap{P_{1}}
\newcommand\lrmap{\lRmap}
\newcommand\Rmapl{\lRmap}
\newcommand\ipol{\widetilde{I_{h}}}
\newcommand\ipoll{I_{h}}
\newcommand\pwt{\sigma}
\newcommand\riemdist{\textnormal{dist}_{\surface\t}}
\newcommand{\oset}[3][0ex]{%
  \mathrel{\mathop{#3}\limits^{
    \vbox to#1{\kern-2\ex@
    \hbox{$\scriptstyle#2$}\vss}}}}
\DeclareDocumentCommand{\HspaceO}{%
  O{1} m%
}{\oset[0ex]{\circ\hphantom{#1}}{H^{#1}}\!\! (#2)%
}
\DeclareDocumentCommand{\WspaceO}{%
  O{k,p} m%
}{\oset[0ex]{\circ\hphantom{#1}}{W^{#1}}\!\! (#2)%
}
\DeclareDocumentCommand{\Hspace}{%
  O{1} m%
}{H^{#1}(#2)%
}
\DeclareDocumentCommand{\Wspace}{%
  O{k,p} m%
}{W^{#1}(#2)%
}
\begin{document}

\title{Maximum norm stability and error estimates for the evolving surface
  finite element method}
\author{Bal\'{a}zs Kov\'{a}cs\footnote{Mathematisches Institut, University of
  T\"{u}bingen, Auf der Morgenstelle 10, 72076 T\"{u}bingen, Germany, E-mail
  address: $\{$kovacs,power$\}$@na.uni-tuebingen.de} \, and
Christian Andreas Power Guerra\footnotemark[\value{footnote}]
}
\maketitle

\begin{abstract}
  We show convergence in the natural $L^{\infty}$- and $W^{1,\infty}$-norm for a
  semidiscretization with linear finite elements of a linear parabolic partial
  differential equations on
  evolving surfaces.  To prove this we show error estimates for a Ritz map,
  error estimates for the material derivative of a Ritz map and a weak discrete
  maximum principle.
\end{abstract}

\noindent Keywords: evolving surfaces, parabolic PDE, ESFEM, weighted norm, weak
discrete maximum principle;

\section{Introduction}
% \label{sec:introduction}

Many important problems can be modeled by partial differential equations \linebreak (PDEs)
on evolving surfaces.  Examples for such equations are given in
material sciences, fluid mechanics and biophysics
\cite{Fife,2004james_lowengrub,2010elliott_stinner}.  The basic linear parabolic
PDE on a moving surface is
\begin{equation*}
  \mat u + u \nb_{\Gat} \cdot v - \laplace_{\Gat} u = f \quad
  \text{on } \surface(t).
\end{equation*}
Here the velocity \(v\) is explicitly given and we seek to compute a numerical approximation to the exact
solution \(u\).  Dziuk and Elliott \cite{DziukElliott_SFEM} introduced the evolving
surface finite element method (ESFEM) to solve this problem.  Error estimates
for the semidiscretization with piecewise linear finite elements in
the \(L^{2}\)- and \(H^{1}\)-norm are given in
\cite{DziukElliott_L2,DziukElliott_ESFEM}.

\bigskip
The aim of this work is to give error bounds for the semidiscretization with linear finite elements in the
\(L^{\infty}\)- and \(W^{1,\infty}\)-norm. The authors are not aware of any other maximum norm convergence results for \emph{evolving surface} PDEs.

Such estimates are of interest for nonlinear
parabolic PDEs on evolving surfaces and if the velocity
\(v\) is not explicitly given, but depends on the exact solution \(u\).
Example of such problems are given in
\cite{Deckelnick2001,Fife,BarrettGarckeNurnberg,ElliottStyles_ALEnumerics,CGG}
and the references therein. The first convergence results for such coupled problems have been recently shown in \cite{soldriven}. The treatment of such general equations are beyond the scope of this paper.

\bigskip
Our convergence proof for the semidiscretization of the linear heat equation on evolving surfaces relies on three main results.  

\begin{itemize}
\item 
We give some error bounds in the \(L^{\infty}\)- and \(W^{1,\infty}\)-norms for a suitable \emph{time dependent} Ritz map (also used in \cite{LubichMansour_wave}, which is not the same as the one in \cite{DziukElliott_L2,Demlow2009}). The proofs of these results are based on Nitsche's weighted norm technique \cite{Nitsche}.

\item 
Since the surface evolves in time the Ritz map is time dependent, hence it does not commute with the time derivative. We therefore need the essential novel results: the \(L^{\infty}\)- and \(W^{1,\infty}\)-norm error bounds in the \emph{material derivatives} of the Ritz map. Up to our knowledge such maximum norm estimates have not been shown in the literature until now.

\item 
We extend the weak finite element maximum principle, which is originally due to Schatz, Thom\'{e}e and Wahlbin \cite{1980_stw} for Euclidian domains, to the evolving surface case.  In \cite{1980_stw} they use basic properties of the semigroup corresponding to the linear heat equation on a bounded domain.  Since there is no semigroup theory for the linear heat equation on evolving surfaces we are going to use a different approach.
\end{itemize}

\medskip
%\rev{Theorem 4.1 ($L^\infty$ and $W^{1,\infty}$ estimates for static surface FEM) is both suboptimal and already essentially proved with correct logarithmic factors elsewhere. In parallel with the Euclidean case there should be no logarithmic factors in the $W^{1,\infty}$ estimate and one in the $L^\infty$ case. The paper [Rannacher \& Scott (1982)] contains the first proof of logarithm-free $W^{1,\infty}$ bounds and refers to several papers which have the correct single logarithmic factor in the $L^\infty$ bound; cf.\ [Haverkamp (1984)] or [Schatz (1998)].}{
We expect that the results presented here may be improved to have optimal logarithmic factors, shown using more involved proof techniques generalised from the Euclidean domain case, see for instance \cite{Haverkamp,RannacherScott,Schatz} and especially the proof of the logarithm-free discrete maximum principle proved in \cite{1998_stw}. However, such logarithmically optimal bounds are not in the scope of the present work, since such a refined analysis would easily double the length of the paper.
%}

%\rev{A recent preprint of H.~Kr\"{o}ner (available on arXiv) also contains $L^\infty$ bounds for surface FEM using a different proof technique.}{
In a recent preprint of Kr\"{o}ner \cite{Kroner}, $L^\infty$ estimates -- of order $O(|\log(h)|h + \tau^{1/2})$ -- are shown for full discretisations of parabolic PDEs on stationary surfaces. The results of that paper are obtained by using different proof techniques.%, independent of \cite{1980_stw,1998_stw}.
%}

\medskip
The layout of the paper is as follows.  We begin in
Section~\ref{sec:parab-probl-evolv} by fixing some notation and introducing the
most basic notion.
In the first three subsection of Section~\ref{sec:preliminaries} we quickly
develop the evolving surface finite element method (ESFEM) and recall basic
results and estimates.  In the following three subsection we introduce a surface
version of Nitsche's weighted norms and finish with an \(L^{2}\)-projection.
In Section~\ref{sec:ritz-map-some} we give error bound in the maximum norm for
our Ritz map.  In Section~\ref{sec:weak-discr-maxim-1} we derive a weak
ESFEM maximum principle.  In Section~\ref{sec:conv-semid} we give error
bounds for the semi discretization of the linear heat equation on evolving
surfaces in the \(L^{\infty}\)- and \(W^{1,\infty}\)-norm.  In
Section~\ref{sec:numerical-experiment} we present the results of a numerical
experiment.  We gather technical details for calculations with our weight functions in
Appendix~\ref{sec:extens-some-auxil}.
% ; consider in particular \eqref{eq:3} and \eqref{eq:41}.

\section{A parabolic problem on evolving surfaces}
\label{sec:parab-probl-evolv}

Let us consider a smooth evolving closed hypersurface $\Ga\t \subset \R^{m+1}$
(our main focus is on the case $m=2$, but some of our results hold for more general cases), $0 \leq t \leq T$, which moves with a given smooth velocity
$v$. More precise we assume that there exists a smooth dynamical system
\(\Phi\colon \surface_{0}\times [0,T]\to \R^{m+1}\), such that for each \(t\in
[0,T]\) the map \(\Phi_{t}:= \Phi(\, .\, , t)\) is an embedding.  We define
\(\surface(t):= \Phi_{t}(\surface_{0})\) and define the velocity \(v\) via the
equation \(\dell_{t}\Phi(x,t)= v\bigl(\Phi(x,t),t\bigr)\). Let $\mat u = \pa_{t}
u + v \cdot \nb u$ denote the material derivative of the function $u$. The
tangential gradient is given by $\nbg u = \nb u -\nb u \cdot \nu \nu$, where
\(\nu\) is the unit normal and finally we define the Laplace--Beltrami operator
via \(\lb u = \nbg \cdot \nbg u\).  This article shares the setting
of Dziuk and Elliott \cite{DziukElliott_ESFEM,DziukElliott_L2}, and \cite{diss_Mansour}.

We consider the following linear problem derived in \cite[Section~3]{DziukElliott_ESFEM}:
\begin{equation}
    \label{eq_ES-PDE-strong-form}
    \begin{cases}
        \begin{alignedat}{3}
          \mat u + u \nb_{\Gat} \cdot v - \laplace_{\Gat} u =&\ f & \qquad & \textrm{ on } \Ga\t ,\\
            u(.,0) =&\ u_0 & \qquad & \textrm{ on } \Ga(0).
        \end{alignedat}
    \end{cases}
\end{equation}
We use Sobolev spaces on surfaces: For a sufficiently smooth surface
$\Gamma$ and $1\leq p \leq \infty$ we define
\begin{equation*}
  W^{1,p}(\Gamma) = \bigl\{ \eta \in L^{p}(\Gamma) \mid \nbg \eta \in
  L^{p}(\Gamma)^{m+1} \bigr\},
\end{equation*}
and analogously $W^{k,p}(\Gamma)$ for $k\in \N$
\cite[Section~2.1]{DziukElliott_ESFEM}. We set \(H^{k}(\Gamma) =
W^{k,2}(\Gamma)\).  Finally, $\GT$ denotes the space-time
manifold, i.e.\ $\GT:= \cup_{t\in[0,T]}
\Ga\t\times\{t\}$. \par
If \(f=0\) then a weak formulation of this problem reads as follows.
\begin{definition}[weak solution, \cite{DziukElliott_ESFEM} Definition 4.1]
% \label{def_ES-PDE-weak}
A function $u\in H^1(\GT)$ is called a \emph{weak solution} of
\eqref{eq_ES-PDE-strong-form}, if for almost every $t\in[0,T]$
\begin{equation*}
%\label{eq_ES-PDE-weak-form}
    \diff \int_{\Gat}\!\!\!\! u \vphi + \int_{\Gat}\!\!\!\! \nb_{\Gat} u
    \cdot \nb_{\Gat} \vphi = \int_{\Gat}\!\!\!\! u\mat \vphi
\end{equation*}
holds for every  $\vphi \in H^1(\GT)$ and $u(.,0)=u_0$.
\end{definition}

For suitable $f$ and $u_{0}$ existence and uniqueness results, for the strong
and the weak problem, were obtained in \cite[Section~4]{DziukElliott_ESFEM}.  \par
Throughout this article we assume that \(f\) and \(u_{0}\) a such regular that \(u\in
W^{3,\infty}(\GT)\).  Furthermore we set for simplicity reasons in all sections
$f=0$, since the extension of our results to the inhomogeneous case are
straightforward.

\section{Preliminaries}
\label{sec:preliminaries}

We give a summary of this section.  In Section~\ref{sec:semi-dis-creti} we
introduce the ESFEM, which is due to Dziuk and Elliott
\cite{DziukElliott_ESFEM}.  In Section~\ref{subsection: lift} we recall the
lifting process, which originates in Dziuk \cite{Dziuk88}.  In
Section~\ref{sec:geom-estim-bilin} we collect important results from Dziuk
and Elliott \cite{DziukElliott_L2} and sometimes state them in a slightly more
general fashion.  In Section~\ref{sec:weight-norms-estim} we introduce
weighted norms, which are due to Nitsche \cite{Nitsche}, and give connections to
the \(L^{\infty}\)-norm.  In
Section~\ref{sec:interp-inverse-estim} we give interpolation estimates in the
\(L^{2}\)-, \(L^{\infty}\)- and weighted norms and further give some special
interpolation estimates in weighted norms.  The latter two were first stated in
Nitsche \cite{Nitsche}.  In
Section~\ref{sec:estimates-an-l2} we introduce an \(L^{2}\)-projection, give
a stability bound in \(L^{p}\)-norms and finish with a error estimate with respect to a
different weight function.  The basic reference for this is Douglas, Dupont,
Wahlbin \cite{1975_ddw} and Schatz, Thom\'{e}e, Wahlbin \cite{1980_stw}.

\subsection{Semidiscretization with the evolving surface finite
  element method}
\label{sec:semi-dis-creti}

The smooth surface $\Gat$ is approximated by a triangulated one denoted by $\Ga_h(t)$, whose vertices $a_j\t = \Phi(a_{j}(0),t)$ are sitting on the surface for all time, such that
\begin{equation*}
    \Ga_h(t) = \bigcup_{E(t)\in \Th(t)} E(t).
\end{equation*}
We always assume that the (evolving) simplices $E(t)$ are forming an admissible
triangulation $\Th(t)$, with $h$ denoting the maximum diameter.  Admissible triangulations were introduced in \cite[Section~5.1]{DziukElliott_ESFEM}: Every $E(t)\in \Th(t)$ satisfies that the inner radius $\sigma_{h}$ is bounded from below by $ch$ with $c>0$, and $\Ga_{h}(t)$ is not a global double covering of $\Ga(t)$. The discrete tangential gradient on the discrete surface $\Ga_h\t$ is given by
\begin{equation*}
    \nb_{\Ga_h\t} \phi := \nb {\phi} - \nb {\phi} \cdot \nu_h \nu_h,
\end{equation*}
understood in a piecewise sense, with $\nu_h$ denoting the normal to $\Ga_h(t)$ (see \cite{DziukElliott_ESFEM}).

For every $t\in[0,T]$ we define the finite element subspace $S_h\t$ spanned by the continuous, piecewise linear evolving basis functions $\chi_j$, satisfying
\[
\chi_j(a_i\t,t) = \delta_{ij} \quad  \text{for all }i,j = 1, 2, \dotsc, N,
\]
therefore
%\begin{equation*}
$S_h\t = \spn\big\{ \chi_1( \, . \,,t), \chi_2( \, . \,,t), \dotsc, \chi_N( \, . \,,t) \big\}$.
%\end{equation*}

We interpolate the dynamical system \(\Phi\) by
\(\Phi_{h}\colon \surface_{h}(0) \to \R^{m+1}\), the discrete dynamical system of \(\surface_{h}(t)\).
This defines a discrete surface velocity \(V_{h}\) via \(\dell_{t} \Phi_{h}(y_{h},t) = V_{h}\bigl( \Phi_{h}(y_{h},t),t\bigr)\). Then the discrete material derivative is given by
\begin{equation*}
    \mat_h \phi_h = \pa_t \phi_h + V_h \cdot \nb \phi_h  \qquad (\phi_h \in S_h\t).
\end{equation*}

The key \textit{transport property} derived in \cite[Proposition 5.4]{DziukElliott_ESFEM}, is the following
\begin{equation}\label{eq: transport property}
    \mat_h \chi_k = 0 \qquad \textrm{for} \quad k=1,2,\dotsc,N.
\end{equation}

The spatially discrete problem for evolving surfaces is: Find a $U_h\in S_h\t$ with $\mat_h U_h\in S_h\t$ and temporally smooth \st, for every $\phi_h \in S_h\t$  with $\mat_h \phi_h \in S_h\t$,
\begin{equation}
\label{eq: semidiscrete problem}
        \diff \!\!\int_{\Ga_h\t}\!\! U_h \phi_h
        + \int_{\Ga_h\t}\!\!\! \nbgh U_h \cdot \nbgh \phi_h = \int_{\Ga_h\t}\!\! U_h \mat_h \phi_h ,
\end{equation}
with the initial condition $U_h( \, . \,,0)=U_h^0\in S_h(0)$ being a sufficient approximation to $u_0$.

\subsection{Lifts}
\label{subsection: lift}

In the following we recall the so called \emph{lift operator}, which was introduced in \cite{Dziuk88} and further investigated in \cite{DziukElliott_ESFEM,DziukElliott_L2}. The lift operator projects a finite element function on the discrete surface onto a
function on the smooth surface.

Using the \emph{oriented distance function} $d$ (\cite[Section~2.1]{DziukElliott_ESFEM}), for a \co\ function $\eta_h \colon \Ga_h\t \to \R$ its lift is define as
\begin{equation*}
    \eta_{h}^{l}(x^{l},t) := \eta_h(x,t), \qquad x\in\Ga\t,
\end{equation*}
where for every $x\in \Ga_{h}\t$ the value $x^{l}=x^{l}(x,t)\in\Gat$ is uniquely defined via $x = x^{l} + \nu(x^{l},t) d(x,t)$. This notation for $x^l$ will also be used later on. By $\eta^{-l}$ we mean the function whose lift is $\eta$, and by \(E_{h}^{l}\) we mean the lift of the triangle \(E_{h}\).

The following pointwise estimate was shown in the proof of Lemma~3 from Dziuk
\cite{Dziuk88}:
\begin{equation}
  \label{eq:66}
  \frac{1}{c} \abs{\nbg \eta_{h}^{l}(x^{l})}
  \leq \abs{\nbgh \eta_{h}(x)}
  \leq c \abs{\nbg \eta_{h}^{l}(x^{l})}.
\end{equation}

We now recall some notions using the lifting process from \cite{Dziuk88,DziukElliott_ESFEM}. We have the lifted finite element space
\begin{equation*}
    S_h^l\t := \big\{ \vphi_h = \phi_h^l \, | \, \phi_h\in S_h\t \big\}.
\end{equation*}
By $\delta_h$ we denote the quotient between the \co\ and discrete surface measures, $\d A$ and $\d A_h$, defined as $\delta_h \d A_h = \d A$.
% Further, we recall that
% \begin{equation*}
%     \pr := \big(\delta_{ij} - \nu_{i}\nu_{j}\big)_{i,j=1}^{m+1} \quad \textrm{and} \quad \prh := \big(\delta_{ij} - \nu_{h,i}\nu_{h,j}\big)_{i,j=1}^{m+1}
% \end{equation*}
% are the projections onto the tangent spaces of $\Ga$ and $\Ga_h$. Further, from \cite{DziukElliott_L2}, we recall the notation
% \begin{equation*}
%     Q_h = \frac{1}{\delta_h} (I-d\wein) \pr \prh \pr (I-d\wein),
% \end{equation*}
% where $\wein$ ($\wein_{ij} = \pa_{x_j}\nu_i$) is the (extended) Weingarten map.
For these quantities we recall some results from \cite[Lemma 5.1]{DziukElliott_ESFEM}, \cite[Lemma 5.4]{DziukElliott_L2} and \cite[Lemma 6.1]{diss_Mansour}.
\begin{lemma}
%\label{lemma: geometric est}
  % Assume that $\Ga_h\t$ and $\Ga\t$ is from the above setting, then we have the estimates:
  For sufficiently small $h$, we have the estimates
  \begin{gather*}
    \|d\|_{L^\infty(\Ga_h\t)} \leq c h^2,
    \quad \|1-\delta_h\|_{L^\infty(\Ga_h\t)} \leq c h^2,
    % \\
    % \|\mat_h d \|_{L^\infty(\Ga_h\t)} \leq c h, \quad \|\pr - Q_h\|_{L^\infty(\Ga_h\t)} \leq c h^2, \quad \|\pr(\mat_hQ_h)\pr\|_{L^\infty(\Ga_h\t)} \leq c h^2 ,
  \end{gather*}
  with constants independent of \(t\) and \(h\).
  % depending on $\GT$, but not on $t$.
\end{lemma}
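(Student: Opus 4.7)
The plan is to reduce both estimates to two standard geometric facts about the piecewise-linear interpolation $\Ga_h\t$ of the smooth evolving surface $\Ga\t$: that the signed distance $d$ is of order $h^{2}$, and that the continuous and discrete unit normals $\nu, \nu_{h}$ differ by order $h$. By the smoothness of $\Phi$ and compactness of $[0,T]$, all geometric quantities of $\Ga\t$ (curvatures, second derivatives of $d$, size of a tubular neighbourhood containing $\Ga_h\t$) are bounded uniformly in $t$, so every constant below may be chosen independently of $t$ and $h$.

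For the first bound, fix $t$ and a simplex $E\t \in \Th\t$ with vertices $a_{0}\t, \dotsc, a_{m}\t$. These lie on $\Ga\t$ by construction, so $d(a_{j}\t,t)=0$. For $x \in E\t$, a second-order Taylor expansion of $d(\,\cdot\,,t)$ around $a_{0}\t$ gives
\begin{equation*}
    d(x,t) = \nb d(a_{0}\t,t)\cdot(x - a_{0}\t) + R(x), \qquad \abs{R(x)} \leq c\, \abs{x - a_{0}\t}^{2} \leq c h^{2},
\end{equation*}
using the uniform bound on $\nb^{2} d$ on a fixed tubular neighbourhood. Since $\nb d(a_{0}\t,t) = \nu(a_{0}\t,t)$, the linear term is a convex combination of the quantities $\nu(a_{0}\t,t) \cdot (a_{j}\t - a_{0}\t)$, each of which is $O(h^{2})$ by the secant bound obtained from writing $\Ga\t$ locally as a graph over $T_{a_{0}\t}\Ga\t$ and Taylor-expanding the graph function to second order. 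Hence $\abs{d(x,t)} \leq c h^{2}$.

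For the second bound, the surface measure on $\Ga\t$ pulls back to $\Ga_h\t$ via the lift $x \mapsto x^{l}$, and the resulting Jacobian factors (as computed in \cite{DziukElliott_L2,diss_Mansour}) into a term of the form $1 + O(d)$ coming from the normal displacement in a tubular neighbourhood, controlled by uniformly bounded Weingarten data, and a factor $\nu(x^{l},t) \cdot \nu_{h}(x,t)$ accounting for the tilt between the two surfaces. The first factor is $1 + O(h^{2})$ by the first estimate; the second equals $1 - \tfrac{1}{2}\abs{\nu - \nu_{h}}^{2} = 1 + O(h^{2})$ by the classical normal-error estimate $\abs{\nu(x^{l},t) - \nu_{h}(x,t)} \leq c h$, which itself reduces to the same local-graph argument applied to tangent vectors at a vertex. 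Multiplying the two factors gives $\abs{1 - \delta_{h}(x,t)} \leq c h^{2}$.

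The only non-bookkeeping ingredient is thus the pair of local-graph estimates $\abs{\nu(a_{0}\t,t) \cdot (a_{j}\t - a_{0}\t)} \leq c h^{2}$ and $\abs{\nu - \nu_{h}} \leq c h$, both uniform in $t$ by the smoothness hypothesis on $\Phi$; once these are granted, both displayed inequalities follow by unpacking bounded geometric quantities, which is why these bounds are usually invoked as folklore with only a reference.
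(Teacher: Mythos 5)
Your argument is correct. The paper gives no proof of this lemma at all --- it simply recalls the estimates from \cite[Lemma~5.1]{DziukElliott_ESFEM}, \cite[Lemma~5.4]{DziukElliott_L2} and \cite[Lemma~6.1]{diss_Mansour} --- and your two ingredients (the Taylor expansion of $d$ at a vertex combined with the secant bound $\nu(a_0)\cdot(a_j-a_0)=O(h^2)$, and the factorization of $\delta_h$ into $\nu\cdot\nu_h=1-\tfrac12\abs{\nu-\nu_h}^2$ times a $1+O(d)$ curvature product) are precisely the standard proofs given in those references, with uniformity in $t$ handled as you say by smoothness of $\Phi$ and compactness of $[0,T]$.
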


\subsection{Geometric estimates and bilinear forms}
\label{sec:geom-estim-bilin}

Let us denote by \(\Phi_{h}^{l}\colon \surface_{0}\times [0,T]\to \R^{m+1}\) the
lift of \(\Phi_{h}\). We define the velocity $v_{h}$ via the formula
\(\dell_{t}\Phi_{h}^{l}(x,t) = v_{h}\bigl(\Phi_{h}^{l}(x,t),t\bigr)\).  Then the
discrete material derivative on \(\surface(t)\) is given by
\begin{equation*}
  \mat_{h} u = \dell_{t} u + v_{h} \cdot \nb u,
\end{equation*}
which satisfies the following relations, cf.~\cite{DziukElliott_L2}:
\begin{gather}
  \label{eq:40}
  \mat u
  = \mat_{h}u + (v_{h}-v)\cdot \nbg u, \\
  \label{eq: velocity L infty bound}
  \Lnorm[\surface(t)][\infty]{v - v_{h}}
  + h \Wnorm[\surface(t)][\infty]{v-v_{h}}
  \leq c h^{2} \Wnorm[\surface(t)][2,\infty]{v},
\end{gather}

We use the time dependent bilinear forms defined in \cite[Section 3.3]{DziukElliott_L2}: for $z,\vphi \in H^1(\Ga\t)$ and $Z_h, \phi_h \in H^{1}(\surface_{h}(t))$:
\begin{align*}
\begin{aligned}[c]
    a(t;z,\vphi)   &= \int_{\Ga\t}\!\!\! \nbg z \cdot \nbg \vphi, \\
    m(t;z,\vphi)   &= \int_{\Ga\t}\!\!\! z \vphi, \\
    g(t;v;z,\vphi) &= \int_{\Ga\t}\!\!\! (\nbg \cdot v) z\vphi,\\
    % b(t;v;z,\vphi) &= \int_{\Ga\t}\!\!\! \Btensor(v) \nbg z \cdot \nbg \vphi,
\end{aligned}
& \quad
\begin{aligned}[c]
    a_h(t;Z_h,\phi_h)     &= \sum_{E\in \Th}\! \int_E \!\!\! \nbgh Z_h \cdot \nbgh \phi_h, \\
    m_h(t;Z_h,\phi_h)     &= \int_{\Ga_h\t}\!\!\! Z_h \phi_h\\
    g_h(t;V_h;Z_h,\phi_h) &= \int_{\Ga_h\t}\!\!\! (\nbgh \cdot V_h) Z_h \phi_h,\\
    % b_h(t;V_h;Z_h,\phi_h) &= \sum_{E\in \Th}\! \int_E \!\!\! \Btensor_h(V_h) \nbgh Z_h \cdot \nbgh \phi_h,
\end{aligned} \\
%\end{align*}
%\begin{align*}
%   % CAPG: To avoid the warning message. Let's thing about this later, when we
%   % know which journal we want.
%   a(t;z,\vphi)   &= \int_{\Ga\t} \nbg z \cdot \nbg \vphi, \\
%   m(t;z,\vphi)   &= \int_{\Ga\t} z \vphi, \\
%   g(t;v;z,\vphi) &= \int_{\Ga\t} (\nbg \cdot v) z\vphi,\\
  b(t;v;z,\vphi) &= \int_{\Ga\t} \Btensor(v) \nbg z \cdot \nbg \vphi,\\
%   a_h(t;Z_h,\phi_h)     &= \sum_{E\in \Th} \int_E \nbgh Z_h
%                           \cdot \nbgh \phi_h, \\
%   m_h(t;Z_h,\phi_h)     &= \int_{\Ga_h\t} Z_h \phi_h\\
%   g_h(t;V_h;Z_h,\phi_h) &= \int_{\Ga_h\t} (\nbgh \cdot V_h) Z_h \phi_h,\\
  b_h(t;V_h;Z_h,\phi_h) &= \sum_{E\in \Th} \int_E \Btensor_h(V_h) \nbgh Z_h
%                           \cdot \nbgh \phi_h,
\end{align*}
where the discrete tangential gradients are understood in a piecewise sense, and
with the matrices
% CAPG: Bad box message.
% \begin{alignat*}{3}
%     \Btensor(v)_{ij} &= \delta_{ij} (\nbg \cdot v) - \big( (\nbg)_i v_j + (\nbg)_j v_i \big), \quad && (i,j=1,2,\dotsc,m+1),\\
%     \Btensor_h(V_h)_{ij} &= \delta_{ij} (\nbg \cdot V_h) - \big( (\nbgh)_i (V_h)_j + (\nbgh)_j (V_h)_i \big), \quad && (i,j=1,2,\dotsc,m+1).
%   \end{alignat*}
\begin{gather*}
  \Btensor(v)_{ij} = \delta_{ij} (\nbg \cdot v) -
                     \big( (\nbg)_i v_j + (\nbg)_j v_i \big), \\
  \Btensor_h(V_h)_{ij} = \delta_{ij} (\nbg \cdot V_h)
                         - \big( (\nbgh)_i (V_h)_j + (\nbgh)_j (V_h)_i \big),
\end{gather*}
where $i,j= 1,2,\dotsc, m+1$.

The time derivatives of the bilinear forms are given in the following lemma.
\begin{lemma}[Discrete transport property]
\label{lemma: transport properties}
  For \(z,\varphi\in H^{1}(\surface(t))\) we have
  \begin{equation}
    \label{eq: continuous transport properties}
    \begin{aligned}
      \frac{\d}{\d t} m(z,\vphi)
      & = m(\mat_{h} z, \vphi) + m(z, \mat_{h} \vphi)
      + g(v_{h}; z, \vphi),\\
      \frac{\d}{\d t} a(z,\vphi)
      & = a(\mat_{h} z, \vphi) + a(z, \mat_{h} \vphi)
      + b(v_{h}; z, \vphi).
    \end{aligned}
  \end{equation}
  Similarly for $Z_{h},\phi_{h}\in H^{1}(\surface_{h}(t))$ we have
  \begin{equation}
    \label{eq: discrete transport properties}
    \begin{aligned}
      \ddti{}{t} m_{h}(Z_{h},\phi_{h})
      &= m_{h}(\dellmat Z_{h},\phi_{h}) + m_{h}( Z_{h},\dellmat \phi_{h})
      + g_{h}(V_{h};Z_{h},\phi_{h}), \\
      \ddti{}{t} a_{h}(Z_{h},\phi_{h})
      &= a_{h}(\dellmat Z_{h},\phi_{h}) + a_{h}(Z_{h},\dellmat \phi_{h})
      + b_{h}(V_{h}; Z_{h},\phi_{h}).
    \end{aligned}
  \end{equation}
\end{lemma}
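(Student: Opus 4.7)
The plan is to prove the discrete identities on $\Gamma_h(t)$ first, as direct consequences of the standard Leibniz formula for integrals on an evolving surface, and then to deduce the continuous identities on $\Gamma(t)$ by relating the true surface velocity $v$ to the lifted discrete velocity $v_h$. For the discrete case, since $V_h$ is by definition the velocity of the polyhedral surface $\Gamma_h(t)$, the Leibniz formula applies piecewise on each $E(t)\in\mathcal{T}_h(t)$:
\begin{equation*}
  \frac{\d}{\d t}\int_{\Gamma_h(t)} f = \int_{\Gamma_h(t)} \big(\partial^\bullet f + f\,\nabla_{\Gamma_h}\!\cdot V_h \big).
\end{equation*}
Taking $f = Z_h\phi_h$ together with the product rule $\partial^\bullet(Z_h\phi_h) = (\partial^\bullet Z_h)\phi_h + Z_h(\partial^\bullet\phi_h)$ yields the $m_h$ identity. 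For the $a_h$ identity, apply the same principle to the piecewise smooth integrand $\nabla_{\Gamma_h}Z_h\cdot\nabla_{\Gamma_h}\phi_h$; the formula for the material derivative of a tangential gradient (cf.\ Dziuk--Elliott) then produces the two symmetric gradient terms plus a correction which, by the very definition of $\mathcal{B}_h(V_h)$, assembles into $b_h(V_h;Z_h,\phi_h)$.

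The continuous case is more delicate because $v_h$ is \emph{not} the velocity of $\Gamma(t)$ -- that role is played by $v$. The classical transport theorem gives
\begin{equation*}
  \frac{\d}{\d t}\int_{\Gamma(t)} f = \int_{\Gamma(t)} \big(\mat f + f\,\nbg\!\cdot v\big)
\end{equation*}
in terms of $v$. The key observation allowing $v$ to be replaced by $v_h$ is that $v - v_h$ is \emph{tangential} to $\Gamma(t)$: by construction, for every $x\in\Gamma_0$ the lifted trajectory $t\mapsto\Phi_h^l(x,t)$ lies on $\Gamma(t)$, so its velocity shares the normal component of the surface, giving $v_h\cdot\nu = v\cdot\nu$. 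Substituting $\mat z = \mat_h z + (v-v_h)\cdot\nbg z$ (and similarly for $\varphi$) into the classical identity with $f = z\varphi$, the discrepancy between $\frac{\d}{\d t} m(z,\varphi)$ and the claimed right-hand side telescopes into $\int_{\Gamma(t)} \nbg\!\cdot \big((v-v_h) z\varphi\big)$, which vanishes on the closed hypersurface because the integrand is the surface divergence of a tangential vector field. The $a$-identity follows from the same strategy applied to $\nbg z\cdot \nbg\varphi$, using additionally the formula for $\mat(\nbg z)$; the resulting velocity-gradient terms assemble into $\mathcal{B}(v_h)$ after the same tangentiality substitution.

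The main obstacle is the verification that $v_h\cdot\nu = v\cdot\nu$; once this is in hand, all four identities reduce to combinations of the Leibniz formula, the product rule for material derivatives, and the surface divergence theorem on a closed hypersurface. The $a$-identities are the most computationally heavy, but their structure parallels that of the $m$-identities once the material derivative of the tangential gradient is brought in.
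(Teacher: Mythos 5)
Your proposal is correct in substance, but for the continuous identities it takes a noticeably longer route than the one the paper (following Dziuk and Elliott) intends. The intended argument is simply this: the lifted discrete flow $\Phi_h^l$ is \emph{itself} a dynamical system for the same family $\Gat$, so the general Leibniz/transport lemma of Dziuk--Elliott---which holds for \emph{any} velocity field whose flow carries $\Ga(0)$ onto $\Gat$---applies verbatim with velocity $v_h$ and material derivative $\mat_h$, yielding both continuous identities at once with no comparison between $v$ and $v_h$ required. You instead start from the transport theorem for $v$ and convert to $v_h$ using the tangentiality of $v-v_h$; your verification that $v\cdot\nu = v_h\cdot\nu$ (both flows parametrize the same surfaces, so they share the normal velocity) is correct, and for the $m$-identity the discrepancy does indeed collapse to $\int_{\Gat}\nbg\cdot\bigl((v-v_h)z\vphi\bigr)=0$. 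Your treatment of the discrete identities (elementwise Leibniz formula with $V_h$, which is legitimate since each $E(t)$ is the image of $E(0)$ under $\Phi_h$) coincides with the standard argument.

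The one place where your route incurs real, unacknowledged work is the continuous $a$-identity. After substituting $\mat z=\mat_h z+(v-v_h)\cdot\nbg z$ the leftover terms are
\begin{equation*}
  \int_{\Gat}\Bigl[\nbg\bigl(e\cdot\nbg z\bigr)\cdot\nbg\vphi
  +\nbg z\cdot\nbg\bigl(e\cdot\nbg\vphi\bigr)
  +\Btensor(e)\nbg z\cdot\nbg\vphi\Bigr],\qquad e:=v-v_h,
\end{equation*}
and showing this vanishes for tangential $e$ is not a one-line divergence-theorem argument as in the $m$-case: it is precisely the statement that the Dirichlet form is invariant under tangential reparametrizations, and proving it directly requires integrating by parts through second-derivative and curvature terms. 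The invariance is true (it follows from subtracting the two instances $w=v$ and $w=v_h$ of the general transport lemma), but invoking it brings you back to the general lemma you were trying to avoid---which is why the direct application of that lemma to the flow $\Phi_h^l$ is the cleaner path.
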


Important and often used results are the bounds of the geometric perturbation errors in the bilinear forms.
\begin{lemma}
\label{lemma: geometric errors of forms - Holder}
    For all $1\leq p,q\leq \infty$, that are conjugate, $p^{-1}+q^{-1}=1$, and for arbitrary $Z_{h}\in  L^{p}\bigl(\surface_{h}(t)\bigr)$ and $\phi_{h}\in L^{q}\bigl(\surface_{h}(t)\bigr)$, with corresponding lifts $z_{h}\in  L^{p}\bigl(\Ga(t)\bigr)$ and $\vphi_{h}\in L^{q}\bigl(\Ga(t)\bigr)$ we have the following estimates:
    \begin{align*}
        \abs{m(z_h,\vphi_h) - m_{h}(Z_{h},\phi_{h})} & \leq c h^{2} \Lnorm[\surface(t)][p]{z_h} \Lnorm[\surface(t)][q]{\vphi_h}, \\
        \bigabs{ a(z_h,\vphi_h) - a_{h}(Z_h, \phi_h) } & \leq ch^{2} \Lnorm[\surface(t)][p]{\nbg z_h} \Lnorm[\surface(t)][q]{\nbg \vphi_h} , \\
        \bigabs{ g(v_{h};z_h,\vphi_h) - g_{h}(V_{h}; Z_h, \phi_h) }  & \leq ch^{2} \Lnorm[\surface(t)][p]{z_h} \Lnorm[\surface(t)][q]{\vphi_h}, \\
        \bigabs{ b(v_{h};z_h,\vphi_h) - b_{h}(V_{h}; Z_h, \phi_h) } & \leq ch^{2} \Lnorm[\surface(t)][p]{\nbg z_h} \Lnorm[\surface(t)][q]{\nbg \vphi_h},
    \end{align*}
    where the constant $c>0$ is independent from $t\in[0,T]$ and the mesh width $h$.
\end{lemma}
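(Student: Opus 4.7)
The plan is to reduce each of the four estimates to a pointwise geometric residual on $\Gamma_h(t)$ combined with H\"older's inequality in conjugate exponents $p,q$. The structure parallels the $p=q=2$ arguments already carried out in \cite[Section~5]{DziukElliott_L2}; the only real change is that Cauchy--Schwarz is replaced by H\"older and the resulting $L^p$ and $L^q$ norms on $\Gamma_h(t)$ are then transferred back to $\Gamma(t)$ through the equivalence induced by the lift.

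The first step is to rewrite each continuous bilinear form as an integral over $\Gamma_h(t)$. For the $m$-form, using $\d A = \delta_h \, \d A_h$ together with $Z_h = z_h \circ (\cdot)^{l}$ and $\phi_h = \varphi_h \circ (\cdot)^{l}$, this gives
\[
    m(z_h,\varphi_h) - m_h(Z_h,\phi_h) = \int_{\Gamma_h(t)} (\delta_h - 1)\, Z_h \phi_h \, \d A_h,
\]
and the $g$-form is treated identically with the weight $(\nbg \cdot v_h)^{-l}\delta_h - \nbgh \cdot V_h$ replacing $\delta_h - 1$. For the $a$-form, the standard computation in \cite{Dziuk88,DziukElliott_ESFEM} provides a bounded matrix field $\mathcal{A}_h$ on $\Gamma_h(t)$ with
\[
    \nbg z_h(x^l) \cdot \nbg \varphi_h(x^l)\,\delta_h(x) = \mathcal{A}_h(x)\,\nbgh Z_h(x) \cdot \nbgh \phi_h(x),
\]
so that $a(z_h,\varphi_h) - a_h(Z_h,\phi_h)$ becomes the piecewise $\Gamma_h(t)$-integral of $(\mathcal{A}_h - I)\nbgh Z_h \cdot \nbgh \phi_h$; an analogous matrix identity applies to the difference $b - b_h$, the residual tensor now combining the geometric transformation with $\Btensor(v_h) - \Btensor_h(V_h)$.

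The second step is to control each residual pointwise. The bound $\|1-\delta_h\|_{L^\infty(\Gamma_h(t))} \leq ch^2$ is already recorded above, and the proofs of the corresponding $L^2$ statements in \cite{DziukElliott_L2,diss_Mansour} are in fact pointwise in nature, resting on Taylor expansions of the signed distance $d$, the unit normal $\nu$, and the velocity field along each triangle. Extracting the $L^\infty$ version on each $E\in\Th(t)$ and taking the maximum over triangles yields uniform $O(h^2)$ bounds for $\mathcal{A}_h - I$ and for the velocity and tensor residuals appearing in $g - g_h$ and $b - b_h$, with constants independent of $t$. Applying H\"older with exponents $p,q$ on the right-hand side of the four identities above, and finally the $1+O(h^2)$ equivalence of $L^p(\Gamma_h(t))$ and $L^p(\Gamma(t))$ norms (again from $\d A = \delta_h \d A_h$), delivers the four stated inequalities simultaneously.

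The main obstacle, modest in nature, is the bookkeeping required to confirm that every geometric residual collected in \cite{DziukElliott_L2} is genuinely bounded in $L^\infty(\Gamma_h(t))$ by $ch^2$ rather than only in some integrated sense; this is what allows the pointwise factor to be pulled out of the integral before H\"older is applied. Once this pointwise version is in hand, the passage from the classical $L^2$--$L^2$ pairing to the general $L^p$--$L^q$ pairing is routine and uniform in $t\in[0,T]$.
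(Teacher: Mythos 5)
Your proposal is correct and follows essentially the same route as the paper: the authors simply cite the $p=q=2$ proofs of Dziuk--Elliott and Lubich--Mansour and note that the only change is replacing the final Cauchy--Schwarz step by H\"older's inequality, which is exactly the reduction to pointwise $O(h^2)$ geometric residuals followed by H\"older that you carry out in more detail.
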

\begin{proof}
These geometric estimates were established for the case $p=q=2$ in \cite[Lemma 5.5]{DziukElliott_L2} and \cite[Lemma 7.5]{LubichMansour_wave}. To show the estimates for general $p$ and $q$, the same proof apply, except the last step where we use a H\"{o}lder inequality.
\end{proof}

\subsection{Weighted norms and basic estimates}
\label{sec:weight-norms-estim}

Similarly, as in the works of Nitsche \cite{Nitsche}, weighted Sobolev norms and their properties play a very important and central role. In this section we recall some basic results for them.

\begin{definition}[Weight function]
  For $\gamma>0$ sufficiently big but independent of $t$ and $h$ we set
  \begin{align*}
    \rho\colon [0,\infty) \to [0,\infty),\quad
    \rho^{2} := \rho^{2}(h) := \gamma h^{2}\abs{\log h}.
  \end{align*}
  % \kb{is this necessary? Cancel?}{Let $y=y(t)\in \surface(t)$ be a curve.}
  We define a weight function
  \(\mu = \mu(t;\, . \, )\colon \surface(t) \to \R\)
  % \kb{time dependency notation}{$\mu= \mu(t;.):\surface(t)\to \R$}
  via the formula
  \begin{align}
    \label{eq:12}
    \mu(x):= \mu(x,y) := \abs{x - y}^{2} + \rho^{2}
    \quad \forall x\in \surface(t).
  \end{align}
\end{definition}

The actual choice of \(\gamma\) is going to be clear from the proofs.
% \begin{remark}
%   The actual choice of $\gamma$ is going to be clear from this section.  \par
%   To derive a weak discrete maximum principle in
%   section~\ref{sec:weak-discr-maxim-1},  we are going to define a different
%   weight.  It is essentially the square root of $\mu$ without $\gamma$ and
%   $\abs{\log h}$.
% \end{remark}

\begin{definition}[Weighted norms, \cite{Nitsche} Section 2.]
%  \label{definition:wt_norm}
  Let $\mu$ be a weight function and $\alpha \in \R$. %and let $u\colon \surface(t)\to \R$ be sufficiently regular.
  We define the norms
  \begin{align*}
    \wtL{u}^2
    =&\ \int_\Ga \mu^{-\alpha} |u|^2,\\
    % \wtL{\nbg u}^2
    % =&\ \int_\Ga \mu^{-\alpha} |\nbg u|^2,\\
    % \wtL{\nbg^{2} u}^{2}
    % =&\ \int_\Ga \mu^{-\alpha} |\nbg^{2} u|^2,\\
    \wtH{u}^{2} = \wtL{u}^{2} + \wtL{\nbg u}^{2},
    \quad
     & \wtHk[2][\alpha]{u}^{2} = \wtH{u}^{2} +
       \wtL{\nbg^{2} u}^{2}.
  \end{align*}
\end{definition}

\begin{lemma}
  \label{lemma:w1i_to_wt_H1}
  Let $\dim \surface(t) = 2$. Let $\phi_{h}\in S_{h}(t)$ with corresponding lift $\varphi_{h} \in S_h^l\t$. Then there exist %a \kb{role of $y$?}{$y=y(t,\phi_{h})\in \surface(t)$} and
  constants $c>0$ independent of $t$, $h$
  and $\gamma$ such that
  \begin{align}
    \label{eq:56}
    \Lnorm[\surface(t)][\infty]{\varphi_{h}}
    \leq&\ c h \abs{\log h} \wtL[2]{\varphi_{h}}, \\
    \label{eq:43}
    \Wnorm[\surface(t)][1,\infty]{\varphi_{h}}
    \leq&\ c \gamma \abs{\log h}^{1/2} \wtH[1]{\varphi_{h}}.
  \end{align}
\end{lemma}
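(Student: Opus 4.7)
Both estimates are pointwise in nature and follow by the same scheme: fix a point $y\in\surface(t)$, center the weight $\mu(\cdot)=\mu(\cdot,y)$ at $y$, and bound $|\varphi_h(y)|$ (respectively $|\nbg\varphi_h(y)|$) by a local inverse estimate on the lifted element containing $y$, which is then converted to the weighted norm using the smallness of $\mu$ near $y$.

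\textbf{Inverse estimate on $E^l$.} Let $E\in\Th(t)$ be a discrete element with $y\in E^l$. By admissibility of $\Th(t)$, the lift bounds on $\delta_h$ (previous lemma) and the pointwise gradient equivalence \eqref{eq:66}, $E^l$ is a curvilinear triangle of diameter $\leq ch$ and the standard affine scaling argument transfers from the reference triangle through $E$ to $E^l$. This yields, for $\psi\in\{\varphi_h,\nbg\varphi_h\}$ (read piecewise for the gradient),
$$|\psi(y)|^{2}\leq\Linftyy[E^l]{\psi}^{2}\leq c\,h^{-2}\int_{E^l}|\psi|^{2},$$
with $c$ independent of $t$ and $h$.

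\textbf{Weight insertion.} Because $|x-y|\leq ch$ on $E^l$, and $\rho^2=\gamma h^2|\log h|\geq h^2$ for $h$ sufficiently small, we have
$$\mu(x)=|x-y|^{2}+\rho^{2}\leq c(h^{2}+\rho^{2})\leq c\rho^{2}\qquad\forall x\in E^l.$$
Therefore, for any $\alpha>0$,
$$\int_{E^l}|\psi|^{2}=\int_{E^l}\mu^{\alpha}\,\mu^{-\alpha}|\psi|^{2}\leq \bigl(\sup_{E^l}\mu^{\alpha}\bigr)\wtL[\alpha]{\psi}^{2}\leq c\,\rho^{2\alpha}\wtL[\alpha]{\psi}^{2}.$$
Combined with the inverse estimate,
$$|\psi(y)|^{2}\leq c\,h^{-2}\rho^{2\alpha}\wtL[\alpha]{\psi}^{2}=c\,h^{2\alpha-2}|\log h|^{\alpha}\wtL[\alpha]{\psi}^{2},$$
where the power $\gamma^{\alpha}$ from $\rho^{2\alpha}$ is absorbed into $c$ (with the explicit $\gamma$ retained in the second bound below as a conservative bookkeeping). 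Specializing to $\alpha=2,\psi=\varphi_h$ gives the first inequality after a square root and a supremum over $y$. Specializing to $\alpha=1$ once with $\psi=\varphi_h$ and once with $\psi=\nbg\varphi_h$, and adding, yields
$$|\varphi_h(y)|^{2}+|\nbg\varphi_h(y)|^{2}\leq c\,|\log h|\bigl(\wtL[1]{\varphi_h}^{2}+\wtL[1]{\nbg\varphi_h}^{2}\bigr)=c\,|\log h|\,\wtH[1]{\varphi_h}^{2},$$
from which $\Wnorm[\surface(t)][1,\infty]{\varphi_h}\leq c\gamma|\log h|^{1/2}\wtH[1]{\varphi_h}$, as claimed.

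\textbf{Main obstacle.} The only step that is not pure bookkeeping is the pointwise inverse estimate on the curvilinear lifted element $E^l$: the textbook proof lives on the affine reference triangle mapped onto $E\subset\surface_h(t)$, and one must transfer it to $E^l$ through the lift, using the $C^{0}$-closeness expressed by the $\delta_h$-bound and \eqref{eq:66} to ensure that the constant is independent of $t$ and $h$. Once this geometric point is handled, the remainder is algebra with the weight $\mu$ exploiting nothing more than $\mu\leq c\rho^{2}$ on $E^l$ and $\rho^{2}\sim h^{2}|\log h|$.
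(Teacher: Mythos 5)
Your argument is correct and coincides with the paper's (very brief) proof: both localize to the element containing the point where the $L^\infty$- resp.\ $W^{1,\infty}$-norm is attained, use $\mu\le c\rho^2$ on that element, and conclude with a local inverse estimate transferred through the lift (via $\delta_h$ and \eqref{eq:66}, resp.\ \eqref{eq:41}). Two cosmetic remarks: the ``supremum over $y$'' must be read as evaluating at the maximizing point, since the weighted norm on the right depends on the center $y$; and your derivation of \eqref{eq:56} in fact produces an extra factor $\gamma$ that the paper's statement claims the constant avoids --- a harmless imprecision on the paper's side, as $\gamma$ is eventually a fixed constant.
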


\begin{proof}
There is a point $y_{0,h} \in E_{0}\subset \surface_{h}(t)$ such that
\begin{align*}
\Wnorm[\surface_{h}(t)][1,\infty]{\phi_{h}} = \abs{\phi_{h}(y_{0,h})} + \abs{\nbgh \phi_{h}(y_{0,h})}
= \Wnorm[E_{0}][1,\infty]{\phi_{h}}.
\end{align*}
%Choose $y = y_{0,h}^{l}$.
Note that on $E_{0}$ the estimate $\mu_{h}(x_{h}) \leq c \rho^{2}$ holds for $h < h_{0}$, $h_0$ sufficiently small. Then the second bound yields from using inverse inequality (Lemma~\ref{lemma:inverse_estimate}) and \eqref{eq:41}. The bound \eqref{eq:56} is proved using similar arguments.
\end{proof}

% \begin{lemma}
%   \label{lemma:wtL2_to_Li_or_wtH1_to_W1i}
%   Let $\dim \surface(t) = 2$. There exists a sufficiently small $h_{0}>0$ and a constant $c=c(h_{0})>0$ independent of $t$, $h$ and $\gamma$ such that for all
%   $0<h< h_{0}$ and for all sufficiently smooth $u\colon \surface(t)\to \R$ the following estimates hold
%   \begin{gather}
%     \label{eq:33}
%     \wtL[2]{u}
%     \leq c \rho^{-1}
%     \Lnorm[\surface(t)][\infty]{u}, \\
%     \label{eq:36}
%     \wtH[1]{u}
%     % \leq c (\abs{\log \gamma}^{1/2} + \abs{\log h}^{1/2})
%     \leq c \abs{\log \rho}^{1/2}
%     \Wnorm[\surface(t)][1,\infty]{u}.
%   \end{gather}
% \end{lemma}
% \kb{maybe a slightly better(?) version?}{...}
\begin{lemma}
  \label{lemma:wtL2_to_Li_or_wtH1_to_W1i}
  Let $\dim \surface(t) = 2$. Let $u\colon \surface(t)\to \R$ be a  sufficiently smooth function. Then the  following estimates hold, with a sufficiently small $h_{0}>0$,
  \begin{align}
    \label{eq:33}
    \wtL[2]{u}
    \leq&\ c \rho^{-1}
    \Lnorm[\surface(t)][\infty]{u}, \\
    \label{eq:36}
    \wtH[1]{u}
    % \leq c (\abs{\log \gamma}^{1/2} + \abs{\log h}^{1/2})
    \leq&\ c \abs{\log \rho}^{1/2}
    \Wnorm[\surface(t)][1,\infty]{u},
  \end{align}
  for $0<h< h_{0}$, where the constant $c=c(h_{0})>0$ is independent of $t$, $h$ and $\gamma$.
\end{lemma}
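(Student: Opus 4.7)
The plan is to reduce both estimates to a uniform bound on the purely geometric integral
\[
I_\alpha(y) := \int_{\Gamma(t)} \mu(\cdot,y)^{-\alpha}
= \int_{\Gamma(t)} \bigl(|x-y|^2 + \rho^2\bigr)^{-\alpha}\, \mathrm{d}A(x),
\qquad \alpha\in\{1,2\},
\]
which I claim satisfies $I_2(y) \leq c\rho^{-2}$ and $I_1(y) \leq c|\log\rho|$, uniformly in $y\in\Gamma(t)$ and in $t\in[0,T]$. Once this is known, \eqref{eq:33} follows by pulling out the $L^{\infty}$-norm,
$\wtL[2]{u}^{2} \leq \Linftyy[\Gamma(t)]{u}^{2}\, I_2(y)$,
and \eqref{eq:36} follows from
$\wtH[1]{u}^{2} = \int_{\Gamma(t)} \mu^{-1}\bigl(|u|^{2} + |\nbg u|^{2}\bigr) \leq \Wnorm[\Gamma(t)][1,\infty]{u}^{2}\, I_1(y)$,
and taking square roots.

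To bound $I_\alpha$, I would introduce a geodesic normal chart (or equivalently a local graph parametrization) centred at the pole $y$. Since $\Gamma(t)$ is a smooth compact $2$-surface evolving smoothly on $[0,T]$, the injectivity radius is bounded below by some $r_0>0$ and the chord distance $|x-y|$ is comparable to the geodesic distance $r$ on the ball of radius $r_0$ around $y$, with constants independent of $y$ and $t$. In such a chart the area element is $\sqrt{g}\, r \, \mathrm{d}r\, \mathrm{d}\theta$ with $\sqrt{g}$ bounded above and below uniformly. I would then split $I_\alpha$ into a near piece on $\{r<r_0\}$ and a far piece on $\{r\geq r_0\}$.

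The far piece is controlled by $r_0^{-2\alpha}\,|\Gamma(t)| \leq c$, since $|\Gamma(t)|$ is uniformly bounded. The near piece reduces, after switching to polar coordinates, to
\[
\int_0^{r_0}\!\!\int_0^{2\pi} \frac{\sqrt{g}\, r}{(r^2+\rho^2)^{\alpha}}\,\mathrm{d}\theta\,\mathrm{d}r
\leq c\int_0^{r_0}\!\frac{r\,\mathrm{d}r}{(r^2+\rho^2)^{\alpha}},
\]
which is an explicit elementary integral: for $\alpha=2$ it equals $\tfrac{1}{2}\bigl(\rho^{-2} - (r_0^2+\rho^2)^{-1}\bigr) \leq c\rho^{-2}$, and for $\alpha=1$ it equals $\tfrac{1}{2}\log\bigl(1 + r_0^2/\rho^2\bigr) \leq c|\log\rho|$ for $\rho$ smaller than some $h_0$. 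Combining the near and far estimates yields the required bounds on $I_\alpha$, with $c$ independent of $t$, $h$, and $\gamma$ (the latter only enters through $\rho$).

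The main obstacle, though mild, is verifying that the chart constants, the injectivity radius and the area $|\Gamma(t)|$ can indeed be bounded uniformly in $t\in[0,T]$. This follows from the smoothness of the dynamical system $\Phi$ together with the compactness of $[0,T]$ and of $\Gamma_0$, so that the curvature of $\Gamma(t)$ and the first fundamental form are uniformly controlled; everything else in the proof is a routine polar-coordinate computation.
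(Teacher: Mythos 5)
Your proposal is correct and follows essentially the same route as the paper: pull the $L^{\infty}$- (resp.\ $W^{1,\infty}$-) norm out and bound $\|\mu^{-\alpha}\|_{L^{1}(\Gat)}$ by a polar-coordinate computation, which is exactly what the paper defers to Appendix~\ref{sec:extens-some-auxil} (the coarea/geodesic-sphere bound of Lemma~\ref{lemma:geodesic_sphere} together with the extrinsic--intrinsic distance comparison of Lemma~\ref{lemma:comparision_extr_intr_dist}, whose compactness argument also supplies the uniform-in-$t$ constants you flag at the end). Your explicit evaluations $\int_0^{r_0} r(r^2+\rho^2)^{-2}\,\mathrm{d}r\leq c\rho^{-2}$ and $\int_0^{r_0} r(r^2+\rho^2)^{-1}\,\mathrm{d}r\leq c\abs{\log\rho}$ are precisely the "straightforward calculation" the paper omits.
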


\begin{proof}
  For $\alpha = 1$ or $2$ we obviously have
  % \kb{maybe write $\Lnorm[\surface(t)][\infty]{u} \|\mu^{-\alpha}\|_{L^1(\Gat)}^{1/2}$ ?}{...}
  \begin{equation*}
    \wtL{u}
    \leq \Lnorm[\surface(t)][\infty]{u}
    \Lnorm[\surface(t)][\infty]{u} \|\mu^{-\alpha}\|_{L^1(\Gat)}^{1/2}.
    % \sqrt{\int_{\surface(t)} \mu^{-\alpha}}.
  \end{equation*}
  Then a straightforward calculation, using Appendix~\ref{sec:extens-some-auxil}
  shows both estimates.
\end{proof}

Naturally, there is a weighted version of the Cauchy--Schwarz inequality, namely we have
\begin{equation}
  \label{eq: weighted C--S estimate}
  \begin{split}
    \abs{\aast(z_h,\vphi_h)} & \leq \wtH{z_h} \wtH[-\alpha]{\vphi_h}, \\
    \abs{\aast_{h}(Z_{h},\phi_{h})}
    & \leq c\wtH{z_h} \wtH[-\alpha]{\vphi_h},
  \end{split}
\end{equation}
and similarly for the bilinear forms $g$ and $b$. Furthermore, this yields a weighted version of the geometric errors of the bilinear forms (Lemma~\ref{lemma: geometric errors of forms - Holder}).
%\begin{equation}
%  \label{eq:48}
%  \begin{split}
%    \abs{(g+b)(v; u,w)}
%    & \leq c \Wnorm[\surface(t)][1,\infty]{v}
%    \wtH{u} \wtH[-\alpha]{w}, \\
%    \abs{(g_{h}+b_{h})(v_{h};u_{h},w_{h})}
%    & \leq c \Wnorm[\surface_{h}(t)][1,\infty]{v_{h}}
%    \wtH{u_{h}^{l}} \wtH[-\alpha]{w_{h}^{l}}.
%  \end{split}
%\end{equation}

\begin{lemma}
  \label{lemma:properties_of_wt_norm}
  The following estimates hold, with a constant $c>0$ independent of $t$, $h$ and $\gamma$,
  \begin{align}
    \label{eq:61}
    \abs{\aast(z_h^l,\phi_h^l) - \aast_{h}(Z_h,\phi_h)}
    \leq&\ c h^{2} \wtH{z_h^l} \wtH[-\alpha]{\phi_h^l}, \\
    \lvert (g+b)(v_{h}; z_h^l,\phi_h^l)- (g_{h}+ b_{h})(V_{h}; Z_h,\phi_h) \rvert %\nonumber\\
    \label{eq:62} \leq&\ ch^{2} \wtH{z_h^l} \wtH[-\alpha]{\phi_h^l}.
  \end{align}
\end{lemma}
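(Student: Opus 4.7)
The plan is to follow the proof of Lemma~\ref{lemma: geometric errors of forms - Holder}, only replacing the final H\"older step by a weighted Cauchy--Schwarz inequality. After pulling the discrete integrals back to $\surface(t)$ via the lift, each of the four geometric error differences can be rewritten (as in Dziuk--Elliott~\cite{DziukElliott_L2}) as
\begin{equation*}
  \int_{\surface(t)}\!\bigl(\mathcal{E}_{0}(h)\, z_{h}^{l}\,\phi_{h}^{l}
  +\mathcal{E}_{1}(h)\,\nbg z_{h}^{l}\cdot\nbg\phi_{h}^{l}\bigr),
\end{equation*}
with coefficient tensors $\mathcal{E}_{0},\mathcal{E}_{1}$ depending only on the parametrisation discrepancy between $\surface(t)$ and $\surface_{h}(t)$ (and, for $g$ and $b$, on $v-v_{h}$). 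The estimates $\|d\|_{L^{\infty}}\leq ch^{2}$, $\|1-\delta_{h}\|_{L^{\infty}}\leq ch^{2}$ and~\eqref{eq: velocity L infty bound} deliver the key uniform bound $\|\mathcal{E}_{j}(h)\|_{L^{\infty}(\surface(t))}\leq ch^{2}$.

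Given this representation, both summands are handled by the same elementary identity
\begin{equation*}
  \int_{\surface(t)}|f|\,|g|
  =\int_{\surface(t)}\bigl(\mu^{-\alpha/2}|f|\bigr)\bigl(\mu^{\alpha/2}|g|\bigr)
  \leq \wtL[\alpha]{f}\,\wtL[-\alpha]{g},
\end{equation*}
applied once to $(f,g)=(z_{h}^{l},\phi_{h}^{l})$ and once to $(f,g)=(\nbg z_{h}^{l},\nbg \phi_{h}^{l})$. Adding the two contributions yields precisely $ch^{2}\wtH[\alpha]{z_{h}^{l}}\,\wtH[-\alpha]{\phi_{h}^{l}}$, proving~\eqref{eq:61}. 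Estimate~\eqref{eq:62} follows by the same argument, with the only change being that the coefficient tensors $\mathcal{E}_{j}$ also pick up factors of $v-v_{h}$ and its tangential gradient, all of which are of size $O(h^{2})$ by~\eqref{eq: velocity L infty bound}.

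The sole technical subtlety is that $\mu$ is defined on $\surface(t)$ while the discrete bilinear forms integrate over $\surface_{h}(t)$, so after pull-back the weight is naturally evaluated at the lifted point $x^{l}$, whereas some of the geometric factors still depend on $x\in\surface_{h}(t)$. Because $|x-x^{l}|\leq ch^{2}\ll\rho^{2}=\gamma h^{2}|\log h|$, one obtains for $h$ small enough the uniform pointwise equivalence $\tfrac{1}{2}\mu(x^{l})\leq\mu(x)\leq 2\mu(x^{l})$, with constants independent of $t$, $h$ and $\gamma$. This bookkeeping, which is the only genuinely non-mechanical part of the argument, slots in naturally with the weight computations collected in Appendix~\ref{sec:extens-some-auxil}.
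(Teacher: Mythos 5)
Your proposal is correct and matches the paper's (implicit) argument exactly: the paper states Lemma~\ref{lemma:properties_of_wt_norm} as an immediate consequence of the pointwise $O(h^{2})$ bounds on the geometric error tensors from Lemma~\ref{lemma: geometric errors of forms - Holder} combined with the weighted Cauchy--Schwarz inequality \eqref{eq: weighted C--S estimate}, which is precisely your splitting $\int\lvert f\rvert\lvert g\rvert=\int(\mu^{-\alpha/2}\lvert f\rvert)(\mu^{\alpha/2}\lvert g\rvert)$. Your closing remark on the equivalence $\mu_{h}^{l}\sim\mu$ (estimate \eqref{eq:41}) is sound, though not strictly needed here since the weight is only ever inserted after the integrals have been pulled back to $\surface(t)$.
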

%\begin{proof}
%This is a consequence of Lemma~\ref{lemma: geometric errors of forms - Holder}.
%\end{proof}

\begin{lemma}
  \label{lemma:calculations_with_weight}
  \begin{enumerate}[label=(\roman*)]
  \item\label{item:4} Derivatives of $\mu^{-1}$ are bounded as
    \begin{equation}
      \label{eq:13}
      \abs{\nbg \mu^{-1}} \leq 2 \mu^{-1,5},\quad
      \abs{\lb \mu^{-1}} \leq c \mu^{-2}
    \end{equation}
    with $c>0$ independent of $t$, $h$ and $\gamma$.
  \item \label{item:6} For arbitrary \(u\in H^{1}\bigl(\surface(t)\bigr)\) the following norm inequalities hold:
    \begin{align}
      \label{eq:44}
      \wtH[-1]{\mu^{-1}u}
      \leq&\ c ( \wtL[2]{u} + \wtH[1]{u}), \\
      \label{eq:51}
      \wtL[-1]{ \mu^{-2}u}
      \leq&\ \rho^{-1} \wtL[2]{u}.
    \end{align}
  \end{enumerate}
\end{lemma}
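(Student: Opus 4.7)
The plan is computational: both parts reduce to a direct manipulation of the explicit formula $\mu(x) = |x-y|^{2} + \rho^{2}$, so the main task will be to differentiate carefully and absorb the resulting terms into the weighted norms on the right.

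For (i), I would apply the chain rule to $\mu^{-1}$. Since $\nbg |x-y|^{2}$ is the tangential projection of the ambient gradient $2(x-y)$, its modulus is bounded by $2|x-y|\leq 2\mu^{1/2}$. This gives $\nbg\mu^{-1} = -\mu^{-2}\nbg\mu$ and hence $|\nbg\mu^{-1}|\leq 2\mu^{-3/2}$, the first bound in (\ref{eq:13}). For the Laplace--Beltrami, I would expand
\[
\lb\mu^{-1} = \nbg\cdot\bigl(-\mu^{-2}\nbg\mu\bigr) = 2\mu^{-3}|\nbg\mu|^{2} - \mu^{-2}\lb\mu.
\]
The first term is at most $8\mu^{-2}$ from the gradient bound just shown; for the second, I would invoke smoothness of the (compact, smoothly evolving) surface to conclude that $\lb |x-y|^{2}$ is uniformly bounded in $x$, $y$ and $t\in[0,T]$ by a constant depending only on the extrinsic curvature of $\surface(t)$, so $|\mu^{-2}\lb\mu|\leq c\mu^{-2}$. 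Adding the two contributions yields the second bound of (\ref{eq:13}).

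For (ii), the plan is to insert (i) into the unrolled definitions of the weighted norms. For (\ref{eq:44}) I would write
\[
\wtH[-1]{\mu^{-1}u}^{2} = \int_{\surface(t)}\mu\,(\mu^{-1}u)^{2} + \int_{\surface(t)}\mu\,|\nbg(\mu^{-1}u)|^{2}.
\]
The first integral equals $\wtL[1]{u}^{2}$, trivially at most $\wtH[1]{u}^{2}$. For the second, the product rule together with (\ref{eq:13}) gives the pointwise bound
\[
|\nbg(\mu^{-1}u)|^{2} \leq 2\mu^{-4}|\nbg\mu|^{2}u^{2} + 2\mu^{-2}|\nbg u|^{2} \leq 8\mu^{-3}u^{2} + 2\mu^{-2}|\nbg u|^{2};
\]
multiplying by $\mu$ and integrating produces $8\wtL[2]{u}^{2} + 2\wtL[1]{\nbg u}^{2}$, which is controlled by $\wtL[2]{u}^{2} + \wtH[1]{u}^{2}$. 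Taking square roots delivers (\ref{eq:44}). Estimate (\ref{eq:51}) is a one-liner: since $\mu\geq \rho^{2}$ pointwise,
\[
\wtL[-1]{\mu^{-2}u}^{2} = \int_{\surface(t)}\mu^{-3}u^{2} \leq \rho^{-2}\int_{\surface(t)}\mu^{-2}u^{2} = \rho^{-2}\wtL[2]{u}^{2}.
\]

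No serious obstacle is anticipated. The only mildly non-trivial ingredient is the uniform boundedness of $\lb |x-y|^{2}$ on the compact, smoothly evolving surface, which follows from smoothness of the dynamical system $\Phi$ on $\surface_{0}\times[0,T]$; Appendix~\ref{sec:extens-some-auxil} is presumably where these pointwise formulas are gathered in detail.
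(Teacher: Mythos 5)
Your proposal is correct and follows essentially the same route as the paper: the gradient bound via $\abs{\nbg\mu}\le 2\abs{x-y}\le 2\sqrt{\mu}$, the Laplacian bound via the expansion of $\lb\mu^{-1}$ together with uniform boundedness of $\lb\abs{x-y}^{2}$ on the compact smoothly evolving surface (which the paper justifies by the extension formula $\lb f = \Delta\bar f - \nabla^{2}\bar f(\nu,\nu) - H\nu\cdot\nabla\bar f$), and then (ii) by unrolling the weighted norms and inserting \eqref{eq:13} and $\mu\ge\rho^{2}$. No gaps; your write-up is in fact more detailed than the paper's.
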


\begin{proof}
%\ref{item:4}:  It readily follows
%\begin{equation*}
%\abs{\nbg \mu^{-1}}\leq \abs{\nabla \mu^{-1}} \leq
%\frac{2\abs{x-y}}{\mu^{2}}.
%\end{equation*}
%Use $\abs{x-y} \leq \sqrt{\mu}$ to deduce the first part of
%\eqref{eq:13}.
\ref{item:4}:  The first esitmate follows from
\begin{equation*}
\abs{\nbg \mu^{-1}}\leq \abs{\nabla \mu^{-1}} \leq
\frac{2\abs{x-y}}{\mu^{2}} \leq \frac{2\sqrt{\mu}}{\mu^{2}}.
\end{equation*}
For the second inequality consider the formula, % \kb{missing reference!}{cf.~\cite{...}}:
\begin{equation*}
%\label{eq:16}
\lb f = \Delta \bar{f} - \nabla^{2}\bar{f} (\nu, \nu) - H \nu \cdot
\nabla \bar{f},
\end{equation*}
where %$f\colon \surface(t)\to \R$ is sufficiently smooth,
$\bar f \colon U\to \R$ is an extension of the sufficiently smooth function $f$ to an open neighborhood
$U\subset \R^{m+1}$ of $\surface(t)$, $\nabla^{2}\bar{f}$ denotes the
Hessian of $\bar{f}$ and $H$ denotes the trace of the Weingarten map of
$\surface(t)$.  \par
\ref{item:6} In order to show these estimates we use the bounds \eqref{eq:13} obtained above.
\end{proof}

\subsection{Interpolation and inverse estimates}
\label{sec:interp-inverse-estim}

Here we collect some results involving evolving surface finite element functions.

For a sufficiently regular function \(u\colon \surface(t)\to \R\)
we denote by \(\ipol u \in S_{h}(t)\)
its interpolation on \(\surface_{h}(t)\).
Then the finite element interpolation is given by
$\ipoll u = ( \ipol u )^{l}\in S_{h}^{l}(t)$, having the error estimate below, cf.\ \cite{DziukElliott_acta}.
\begin{lemma}
\label{lemma: interpolation error}
    For $m \leq 3$ and \(p\in \{2,\infty\}\), there exists a constant $c>0$ independent of $h$ and $t$ such that for $u\in W^{2,p}\bigl(\Ga(t)\bigr)$:
    \begin{align*}
      \lVert u - \ipoll u \rVert_{L^{p}(\Gat)}
      & + h \lVert \nbg( u - \ipoll u) \rVert_{L^{p}(\Gat)} \\
      & \leq c h^2 \Big( \|\nbg^2 u\|_{L^{p}(\Gat)} + h\|\nbg u\|_{L^{p}(\Gat)} \Big).
    \end{align*}
\end{lemma}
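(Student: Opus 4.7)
The plan is to reduce the claim to a standard piecewise-polynomial interpolation estimate on the discrete surface, element by element, and then transfer everything back to $\Gamma(t)$ via the lift operator, with careful bookkeeping of the geometric error that arises because $\Gamma_h(t) \neq \Gamma(t)$. I would first fix $t$ (all constants are uniform in $t$ by the assumed smoothness of $\Phi$) and work on a single triangle $E \in \mathcal{T}_h(t)$ with lift $E^l \subset \Gamma(t)$.

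The first main step is to establish the estimate on $\Gamma_h(t)$. Let $u^{-l}$ denote the function on $\Gamma_h(t)$ whose lift is $u$. By definition $\widetilde{I_h} u \in S_h(t)$ interpolates $u^{-l}$ at the vertices of $\mathcal{T}_h(t)$, so on each flat simplex $E$ the standard Lagrange interpolation error estimate (Bramble--Hilbert / scaling on the reference element, valid since $\mathcal{T}_h(t)$ is admissible and for $m \leq 3$ the embedding $W^{2,p} \hookrightarrow C^0$ holds) gives
\begin{equation*}
  \Lnorm[E][p]{u^{-l} - \widetilde{I_h}u}
  + h \Lnorm[E][p]{\nbgh(u^{-l} - \widetilde{I_h}u)}
  \leq c h^{2} \Lnorm[E][p]{\nbgh^{2} u^{-l}}.
\end{equation*}
Summing over $E \in \mathcal{T}_h(t)$ produces the global bound on $\Gamma_h(t)$.

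The second main step is to transfer this to $\Gamma(t)$. Using $\delta_h \, \d A_h = \d A$ and $\|1 - \delta_h\|_{L^\infty} \leq ch^2$, the $L^p$ norms over $E$ and $E^l$ are equivalent with constants $1 + O(h^2)$. The pointwise equivalence \eqref{eq:66} (and its analogue for second tangential derivatives) lets me replace $\nbgh$ with $\nbg$ after lifting. Thus the left-hand side on $\Gamma_h(t)$ is equivalent, up to multiplicative constants independent of $h$ and $t$, to the left-hand side on $\Gamma(t)$.

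The final and most delicate step is bounding $\|\nbgh^{2} u^{-l}\|_{L^p(\Gamma_h(t))}$ in terms of data on $\Gamma(t)$. Writing $u^{-l}(x) = u(x^l,t) = \bar{u}(x - d(x,t)\nu(x^l,t))$ for a smooth extension $\bar{u}$ of $u$ off $\Gamma(t)$, the chain rule produces two kinds of terms: a principal part involving $\nbg^{2}u$ on $E^l$ (bounded by $\|\nbg^2 u\|_{L^p(E^l)}$ after a change of variables), and lower-order terms coming from derivatives of $\nu$ and $d$. Since $\|d\|_{L^\infty} \leq ch^2$ and $\nu$, $H$ are smooth with bounds independent of $t$, these lower-order contributions are controlled by $h\|\nbg u\|_{L^p(E^l)}$. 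Summing over elements and assembling everything yields exactly the asserted bound with the additional $h\|\nbg u\|_{L^p(\Gamma(t))}$ correction.

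The hard part is this last step: the reader has to be convinced that the geometric perturbation between $\Gamma_h(t)$ and $\Gamma(t)$ contributes only at the advertised order, and that this holds uniformly in $t$. The argument is standard (it is essentially the one in \cite{Dziuk88,DziukElliott_acta,DziukElliott_ESFEM}) but requires invoking smoothness of the signed distance function $d$ and of $\nu$ in a tubular neighborhood of $\Gamma(t)$ of width $\gtrsim 1$, which in turn uses the uniform smoothness of the dynamical system $\Phi$ on $[0,T]$. Everything else is routine scaling on admissible simplices.
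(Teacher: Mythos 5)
The paper does not prove this lemma itself but simply cites \cite{DziukElliott_acta}, and your argument is precisely the standard one from that reference (and \cite{Dziuk88}): elementwise Lagrange interpolation on the flat simplices, norm equivalence via $\delta_h$ and \eqref{eq:66}, and the chain-rule comparison of second tangential derivatives, whose $O(h)$ geometric perturbation is exactly what produces the extra $h\lVert\nbg u\rVert_{L^p}$ term on the right-hand side. Your proposal is correct and takes the same route as the source the paper relies on.
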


The interpolation estimates hold also if weighted norms are considered.
\begin{lemma}
  \label{lemma:wt_ipol_error_estimate}
  There exists a constant $c>0$ such that for $u\in
  W^{2,\infty}\bigl(\surface(t)\bigr)$ it holds
  \begin{equation}
    \label{eq:57}
    \wtL[2]{u-\ipoll u}^{2} + \wtH[1]{u-\ipoll u}^{2}
    \leq c h^{2} \abs{\log h} \Wnorm[\surface(t)][2,\infty]{u}^{2}.
  \end{equation}
\end{lemma}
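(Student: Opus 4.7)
The plan is to reduce everything to the unweighted $L^{\infty}$-interpolation estimate of Lemma~\ref{lemma: interpolation error} (applied with $p=\infty$) and to purely geometric bounds on $\int_{\surface(t)} \mu^{-\alpha}$ that belong in Appendix~\ref{sec:extens-some-auxil}. The starting observation is that, pulling the pointwise interpolation error out of the weighted integral,
\begin{equation*}
  \wtL[\alpha]{u - \ipoll u}^{2}
  = \int_{\surface(t)} \mu^{-\alpha} |u - \ipoll u|^{2}
  \leq \Linftyy[\surface(t)]{u - \ipoll u}^{2} \, \int_{\surface(t)} \mu^{-\alpha},
\end{equation*}
and analogously for $\wtL[\alpha]{\nbg(u-\ipoll u)}^{2}$ with $\nbg(u-\ipoll u)$ replacing $u-\ipoll u$. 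By Lemma~\ref{lemma: interpolation error} with $p=\infty$ one then has $\Linftyy[\surface(t)]{u - \ipoll u} \leq ch^{2} \Wnorm[\surface(t)][2,\infty]{u}$ and $\Linftyy[\surface(t)]{\nbg(u-\ipoll u)} \leq ch \Wnorm[\surface(t)][2,\infty]{u}$, so only the weight integrals remain.

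The core step is the two bounds (for $\dim \surface(t)=2$, uniformly in the pole $y$)
\begin{equation*}
  \int_{\surface(t)} \mu^{-1} \leq c \abs{\log \rho} \leq c \abs{\log h},
  \qquad
  \int_{\surface(t)} \mu^{-2} \leq c \rho^{-2} = \frac{c}{\gamma h^{2} \abs{\log h}},
\end{equation*}
which are the standard singular-weight estimates on a smooth surface; I would deduce them in the appendix by localising near $y$ via a normal chart, reducing to the plane integrals $\int_{0}^{R} r\,(r^{2}+\rho^{2})^{-\alpha}\,dr$, and treating the contribution from $\{|x-y|\geq R\}$ by the trivial bound $\mu^{-\alpha}\leq\rho^{-2\alpha}$ times $|\surface(t)|$ (the former dominates).

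Assembling the pieces: for the $\wtL[2]$-part (so $\alpha=2$) the two ingredients multiply to
\begin{equation*}
  \wtL[2]{u-\ipoll u}^{2}
  \leq c h^{4} \Wnorm[\surface(t)][2,\infty]{u}^{2} \cdot \rho^{-2}
  \leq \frac{c h^{2}}{\abs{\log h}} \Wnorm[\surface(t)][2,\infty]{u}^{2},
\end{equation*}
and for the $\wtH[1]$-part (so $\alpha=1$) both the $u-\ipoll u$ and the $\nbg(u-\ipoll u)$ contribution are controlled by
\begin{equation*}
  c (h^{4} + h^{2}) \abs{\log h} \Wnorm[\surface(t)][2,\infty]{u}^{2}
  \leq c h^{2} \abs{\log h} \Wnorm[\surface(t)][2,\infty]{u}^{2},
\end{equation*}
and summing the two contributions yields the stated estimate~\eqref{eq:57}.

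The one non-routine ingredient is the bound $\int_{\surface(t)} \mu^{-2} \leq c\rho^{-2}$: although elementary in the planar case, one must make sure the constant is uniform in $t$ and in the pole $y\in\surface(t)$ and independent of $\gamma$, which is why the geometric details are deferred to the appendix. Everything else is a direct Hölder-type extraction of an $L^{\infty}$-norm from the weighted integrand combined with the already-available unweighted interpolation estimate.
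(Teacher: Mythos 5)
Your argument is correct and is essentially the paper's own proof: the paper likewise pulls the pointwise interpolation error out of the weighted integrals via H\"{o}lder, invokes Lemma~\ref{lemma: interpolation error} with $p=\infty$, and bounds the remaining weight integrals $\int_{\surface(t)}\mu^{-1}$ and $\int_{\surface(t)}\mu^{-2}$ by applying Lemma~\ref{lemma:wtL2_to_Li_or_wtH1_to_W1i} (\eqref{eq:33}, \eqref{eq:36}) to $u\equiv 1$, which is exactly your appeal to the appendix's geodesic polar coordinate estimates. The arithmetic with $\rho^{2}=\gamma h^{2}\abs{\log h}$ checks out.
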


\begin{proof}
  Use a H\"{o}lder inequality, Lemma~\ref{lemma: interpolation error} and
  Lemma~\ref{lemma:wtL2_to_Li_or_wtH1_to_W1i}~\eqref{eq:33}, \eqref{eq:36} with
  the choice $u \equiv 1$.
\end{proof}

\begin{lemma}
  \label{lemma: weighted norm estimates}
  There exists $h_{0} >0$, $\gamma_{0} > 0$ such that for all $\alpha \in \R$
  there exists a constant $c=c(h_{0},\gamma_{0})>0$ independent of $t$
  and $h$ such that for all $\gamma > \gamma_{0}$ for the weight $\mu$,
  c.f. \eqref{eq:12}, and for all $h< h_{0}$ the following inequalities holds:
  \begin{enumerate}[label=(\roman*)]
  \item\label{item:1} Let $u\in H^{1}\bigl( \surface(t) \bigr)$ be
    curved element-wise $H^{2}$.  The interpolation
    $\ipoll u \in S_h^{l}\t$ satisfies
    \begin{align}
      \label{eq:17}
        \wtL{u-\ipoll u} + h \wtL{\nbg(u -\ipoll u)} \leq c h^2 \bigl(\wtL{\nbg^{2} u} + ch\wtL{\nbg u}\bigr),
    \end{align}
    where $\wtL{\nbg^{2} u}$ is understood curved element-wise.
  \item\label{item:2} For any $\vphi_h \in S_h^l\t$ the following
    estimate holds:
    \begin{equation}
      \label{eq:18}
%      \begin{split}
        \wtH[-1]{\mu^{-1}\vphi_h - \ipoll(\mu^{-1}\vphi_h)} %\\&
        \leq c \biggl( \frac{h}{\rho} + h\biggr) \bigl(
        \wtL[2]{\vphi_h} + \wtL[1]{ \nbg \vphi_h} \bigr).
%      \end{split}
    \end{equation}
  \end{enumerate}
\end{lemma}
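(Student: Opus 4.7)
Both parts rest on a quasi-constancy property of $\mu^{-\alpha}$ on each curved lifted element $E^l\subset\surface(t)$. Since $\operatorname{diam}(E^l)\leq ch$ and $\mu\geq\rho^2=\gamma h^2\abs{\log h}$, for all $x,x'\in E^l$ a direct computation gives
\begin{equation*}
\Bigabs{\frac{\mu(x')}{\mu(x)}-1}\leq \frac{2h}{\sqrt{\mu(x)}}+\frac{h^2}{\mu(x)}\leq \frac{2}{\sqrt{\gamma\abs{\log h}}}+\frac{1}{\gamma\abs{\log h}}\leq \frac12
\end{equation*}
provided $\gamma\geq\gamma_0$ and $h\leq h_0$. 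Hence $\mu^{-\alpha}$ is equivalent to its value at an arbitrary fixed point of $E^l$ up to a universal factor. For part (i), I pull this factor of $\mu^{-\alpha}$ out of each element integral, invoke the unweighted interpolation bound of Lemma~\ref{lemma: interpolation error} in $L^{2}(E^l)$ (the $\nbg^{2}$ understood in the curved element-wise sense as the hypothesis allows), push the weight back in, and sum over $E^l\in\Th^{l}$.

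For part (ii) I apply (i) with $\alpha=-1$ to $u:=\mu^{-1}\varphi_h$, which is element-wise $H^2$ because $\mu\geq\rho^{2}>0$, and obtain
\begin{equation*}
\wtH[-1]{\mu^{-1}\varphi_h-\ipoll(\mu^{-1}\varphi_h)}\leq c h\wtL[-1]{\nbg^{2}(\mu^{-1}\varphi_h)}+c h^{2}\wtL[-1]{\nbg(\mu^{-1}\varphi_h)}.
\end{equation*}
Expanding via Leibniz, and using Lemma~\ref{lemma:calculations_with_weight}~\ref{item:4} to bound $\abs{\nbg\mu^{-1}}\leq 2\mu^{-3/2}$ and $\abs{\nbg^{2}\mu^{-1}}\leq c\mu^{-2}$, I get pointwise
\begin{equation*}
\abs{\nbg(\mu^{-1}\varphi_h)}^{2}\leq c\bigl(\mu^{-2}\abs{\nbg\varphi_h}^{2}+\mu^{-3}\abs{\varphi_h}^{2}\bigr),\quad \abs{\nbg^{2}(\mu^{-1}\varphi_h)}^{2}\leq c\bigl(\mu^{-2}\abs{\nbg^{2}\varphi_h}^{2}+\mu^{-3}\abs{\nbg\varphi_h}^{2}+\mu^{-4}\abs{\varphi_h}^{2}\bigr).
\end{equation*}
The stray term $\mu^{-2}\abs{\nbg^{2}\varphi_h}^{2}$ is handled by the element-wise bound $\abs{\nbg^{2}\varphi_h}\leq c\abs{\nbg\varphi_h}$: since $\phi_h$ on $\surface_h(t)$ is genuinely piecewise linear with $\nbgh^{2}\phi_h=0$, and the lift via the closest-point projection contributes only curvature-type terms proportional to $\abs{\nbgh\phi_h}$, which is equivalent to $\abs{\nbg\varphi_h}$ by \eqref{eq:66}. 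Integrating against $\mu$ and relocating excess powers of $\mu^{-1}$ via $\mu^{-1}\leq\rho^{-2}$ as in \eqref{eq:51}, I arrive at
\begin{equation*}
\wtL[-1]{\nbg(\mu^{-1}\varphi_h)}\leq c\bigl(\wtL[1]{\nbg\varphi_h}+\wtL[2]{\varphi_h}\bigr),\qquad \wtL[-1]{\nbg^{2}(\mu^{-1}\varphi_h)}\leq c\rho^{-1}\bigl(\wtL[1]{\nbg\varphi_h}+\wtL[2]{\varphi_h}\bigr),
\end{equation*}
which combined with the displayed consequence of (i) yields the prefactor $c(h/\rho+h^{2})\leq c(h/\rho+h)$ as required.

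The main obstacle is this Hessian step. In the Euclidean prototype of Nitsche \cite{Nitsche} one exploits $\nabla^{2}\varphi_h\equiv 0$ on each flat element, which fails for lifted finite-element functions on the curved $\surface(t)$. The correct replacement is the element-wise bound $\abs{\nbg^{2}\varphi_h}\leq c\abs{\nbg\varphi_h}$ from the smooth lift, without which the $\rho^{-1}$ factor would blow up or be lost. The remainder is careful bookkeeping of the powers of $\mu^{-1}$ and of $\rho^{-1}$ needed to land exactly on the stated $h/\rho+h$ prefactor.
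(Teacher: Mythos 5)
Your proof is correct and follows essentially the same route as the paper: part (i) by element-wise quasi-constancy of $\mu^{-\alpha}$ combined with the unweighted interpolation estimate, and part (ii) by applying (i) with $\alpha=-1$ to $\mu^{-1}\varphi_h$ together with the derivative bounds \eqref{eq:13} and a control of the element-wise Hessian of the lifted finite element function. The paper's version of that Hessian transfer carries an extra factor $h$ (namely $\abs{\nbg^{2}\varphi_h}\leq c h \abs{\nbg \varphi_h}$ element-wise, from the Dziuk-type lift estimate), but your weaker $O(1)$ bound already suffices to land on the stated $h/\rho+h$ prefactor.
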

\begin{proof}
  \ref{item:1}: To prove inequality~\eqref{eq:17} it suffices to show that there
  exists a constant $c=c(\alpha)>0$ independent of $t,h$ such that for each
  element $K \in \mathcal{T}_{h}(t)$ it holds
  \begin{align*}
    \int_{K^{l}} \mu^{\alpha}\bigl( (w-\ipoll w)^{2}
    &+ h \abs{\nbg (w- \ipoll w)}^{2} \bigr) \\
    &\leq ch^{2} \int_{K^{l}} \mu^{\alpha}\bigl( \abs{\nbg^{2} w}^{2} + ch
      \abs{\nbg w}^{2}\bigr),
  \end{align*}
  where $K^{l} \subset \surface(t)$ denote the lifted curved element of $K$.
  It is easy to show that there exists $\gamma_{0}= \gamma_{0}(h_{0}) >0$
  and $c= c(\gamma_{0})>0$ such that for all $\gamma > \gamma_{0}$ it holds
  \begin{align*}
    \max_{K \in \mathcal{T}_{h}} \biggl( \frac{\max_{x\in K^{l}}
    \mu(x,y)}{\min_{x\in K^{l}} \mu(x,y)}\biggr) \leq c.
  \end{align*}
  % ------------------------------------------------------------
  % Detailed proof
  % Let $K\in\pi_h$. Then the length of a curve $\gamma$ on the \co\ surface
  % between two points $p_1, p_2 \in K$ can be estimated as
  % \begin{align*}
  %   \riemdist(p_1^l,p_2^l) & \leq \int_0^1 \sqrt{ |\gamma'\t|^2 } \d t
  %   = \int_0^1 {\big|\pr(I-d\wein)(p_2-p_1)\big|} \d t \\
  %   &= \int_0^1 {\big|I-\nu\nu^T-d\wein\big| \ \big|p_2-p_1\big|} \d t \\
  %   & \leq (1+c_0h^2)|p_2-p_1| \leq (1 + c_0 h^2 ) c_2 h \leq C h,
  % \end{align*}
  % where $C$ is independent of $h$.  Then, since there is a point $p^l \in K^l$
  % such that $\riemdist(z,K^l)=\riemdist(z,p^l)=L$, we can estimate (for a fixed
  % triangle) as
  % \begin{alignat*}{2}
  %   \frac{\sigma_{z}(x)}{\sigma_{z}(x)} &\leq \frac{\sigma_{z}(x)}{L^2 +
  %       \kappa^2h^2}
  %   \leq \frac{(L+Ch)^2 + \kappa^2h^2}{L^2 + \kappa^2h^2}\\
  %   &=    \frac{ L^2 + 2L Ch + C^2h^2 + \kappa^2h^2}{L^2 + \kappa^2h^2}
  %   \leq \frac{ L^2 + C^2 + h^2 + C^2h^2 + \kappa^2h^2}{L^2 +
  %       \kappa^2h^2} \\
  %   &\leq 1+\frac{ L^2 + Ch^2 + C^2h^2}{L^2 + \kappa^2h^2} \leq c,
  % \end{alignat*}
  % for $\kappa\geq C+C^2$ sufficiently large (but independent of $h,\ x$ and
  % $z$). The last inequality is also independent of the chosen triangle,
  % therefore the assertion is proved.
  % ------------------------------------------------------------
  A straightforward calculation finishes the proof.  \par
  % ------------------------------------------------------------
  % Detailed proof
  % Consider the case $-\alpha < 0$.
  % \begin{align*}
  %   \int_{K^{l}} \mu^{-\alpha}\bigl[ (v-\ipoll v)^{2}
  %   & + h\abs{\nbg(v - \ipoll v)}^{2}\bigr]  \\
  %   & \leq (\min_{K^{l} }\mu)^{-\alpha} \int_{K^{l}} \bigl[ (v-\ipoll v)^{2}
  %     + h \abs{\nbg( v- \ipoll v)}^{2}\bigr] \\
  %   & \leq (\min_{K^{l} }\mu)^{-\alpha}
  %     \int_{K^{l}}\frac{\mu^{-\alpha}}{\mu^{-\alpha}} \bigl[ (v-\ipoll v)^{2}
  %     + h \abs{\nbg( v- \ipoll v)}^{2}\bigr] \\
  %   & \leq (\min_{K^{l} }\mu)^{-\alpha} \frac{1}{(\max_{K^{l}}
  %     \mu)^{-\alpha}} \int_{K^{l}} [\ldots] \\
  %   & \leq \biggl( \frac{\max_{K^{l}} \mu}{\min_{K^{l}}
  %     \mu}\biggr)^{\alpha} \int_{K^{l}} [\ldots]
  % \end{align*}
  % For the case $- \alpha >0$ do first max and then min.
  % ------------------------------------------------------------
  \ref{item:2}: For an arbitrary function $f\colon \surface_{h}(t)\to \R$, which
  is element-wise $H^{2}$, a short calculation, similar to the one done in Dziuk
  \cite[Lemma~3]{Dziuk88}, shows that
  \[
    \abs{(\nbg)_{i}(\nbg)_{j} (f^{l})} \leq c \bigl( \abs{((\nbgh)_{i}
      (\nbgh)_{j}f)^{l}} + c h \abs{\nbg (f^{l})}\bigr),
  \]
  for a sufficiently small $h_{0} > h>0$.  A straightforward calculation combined with
  \ref{item:1} and \eqref{eq:13} shows the claim.
  % ----------------------------------------------------------------------
  % Detailed calculation:
  % \begin{align*}
  %   \wtH[-1]{\mu^{-1}\varphi_{h}^{l} &- \ipoll (\mu^{-1}\varphi_{h}^{l})} \\
  %   & \leq  c \Bigl( h \wtL[-1]{\nbg^{2}(\mu^{-1}\varphi_{h}^{l})} + h^{2}
  %   \wtL[-1]{\nbg(\mu^{-1}\varphi_{h}^{l} )}\Bigr).
  % \end{align*}
  % The second term:
  % \begin{align*}
  %   \wtL[-1]{\nbg(\mu^{-1}\varphi_{h}^{l} )}
  %   & \leq c \Bigl( \wtL[2]{\varphi_{h}^{l}} +
  %     \wtL[1]{\nbg \varphi_{h}^{l}}\Bigr)
  % \end{align*}
  % Even without the $h^{2}$ this yields a good term.  The first term:
  % \begin{align*}
  %   \wtL[-1]{\nbg^{2}(\mu^{-1}\varphi_{h}^{l})}
  %   & \leq c\Bigl( \wtL[3]{\varphi_{h}^{l}} +
  %     \wtL[2]{\nbg\varphi_{h}^{l}} + h
  %     \wtL[1]{\nbg\varphi_{h}^{l}}\Bigr)\\
  %   & = c\Bigl( \frac{1}{\sqrt{\mu}} \wtL[2]{\varphi_{h}^{l}}
  %     + (\frac{1}{\sqrt{\mu}} + h) \wtL[1]{\nbg
  %     \varphi_{h}^{l}}\Bigr).  \qedhere
  % \end{align*}
  % ----------------------------------------------------------------------
\end{proof}

The following general version of inverse estimates for finite element functions plays a key role later on, cf.\ \cite{1980_stw}.
\begin{lemma}[Inverse estimate]
  \label{lemma:inverse_estimate}
  There exists $c>0$ such that for each triangle $E_{h}\t \subset
  \surface_{h}(t)$ the following inequality holds
  \[
  \Wnorm[E_{h}\t][k,p]{\varphi_{h}\t} \leq c h^{m-k-2(1/q-1/p)}
  \Wnorm[E_{h}\t][m,q]{\varphi_{h}\t} \qquad (\forall\varphi_{h} \in
  S_{h}(t)) .
  \]
\end{lemma}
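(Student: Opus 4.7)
The plan is to reduce the estimate to the classical Euclidean inverse estimate on a reference triangle. Since each $E_h(t)\subset \surface_h(t)\subset \R^{m+1}$ is flat, the tangential gradient on $E_h(t)$ coincides with the usual gradient in the affine plane containing $E_h(t)$, so no surface-specific modification enters. By admissibility of $\mathcal{T}_h(t)$, there exists an affine map $F\colon\widehat E \to E_h(t)$ from a fixed reference triangle $\widehat E$ with uniformly bounded distortion, namely $\|DF\|\sim h$, $\|DF^{-1}\|\sim h^{-1}$, and $|\det DF|\sim h^{2}$, the constants depending only on the shape-regularity constant.

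On the finite-dimensional space of polynomials of degree at most one on $\widehat E$ all Sobolev (semi)norms are equivalent, so for the pullback $\widehat\varphi:= \varphi_h\circ F$ we have $\|\widehat\varphi\|_{W^{k,p}(\widehat E)} \leq c\, \|\widehat\varphi\|_{W^{m,q}(\widehat E)}$, with seminorms of order $j\geq 2$ vanishing trivially. A direct change of variables, using the bounds on $DF$, gives the scaling
\[
|\varphi_h|_{W^{j,r}(E_h(t))} \sim h^{2/r - j}\, |\widehat\varphi|_{W^{j,r}(\widehat E)}
\]
for every $j,r$. Pulling back on the left and pushing forward on the right, and selecting on each side the seminorm dominating the respective full Sobolev norm for small $h$, yields the combined exponent $2/p - k + m - 2/q = m - k - 2(1/q - 1/p)$, which is precisely the announced factor.

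The proof is otherwise entirely routine; the only real task is careful bookkeeping of the scaling exponents and checking that the dominant seminorm has been chosen correctly on each side. Alternatively one may simply invoke the classical Euclidean inverse estimate of \cite{1980_stw}, since the flatness of $E_h(t)$ makes the surface setting no different from the Euclidean one.
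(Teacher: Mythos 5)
The paper itself offers no proof of this lemma: it is quoted from Schatz, Thom\'ee and Wahlbin \cite{1980_stw}, where it appears as the assumed ``inverse property'' of the finite element spaces, valid for $0\le m\le k$ and $q\le p$. Your overall strategy (affine map to a reference triangle, norm equivalence on a finite--dimensional space, scaling) is the standard way to verify that property, and the reduction to the flat Euclidean situation is indeed harmless. The gap is in the final bookkeeping. When you push the right--hand side forward, the components of $\lVert\widehat\varphi\rVert_{W^{m,q}(\widehat E)}$ scale as $\lvert\widehat\varphi\rvert_{W^{j,q}(\widehat E)}\sim h^{\,j-2/q}\lvert\varphi_h\rvert_{W^{j,q}(E_h)}$ for $j=0,\dots,m$, and the coefficient that is largest for small $h$ is the one at $j=0$, namely $h^{-2/q}$, not $h^{\,m-2/q}$. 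Choosing $h^{\,m-2/q}$ amounts to the claim $\lVert\widehat\varphi\rVert_{W^{m,q}(\widehat E)}\le C\,\lvert\widehat\varphi\rvert_{W^{m,q}(\widehat E)}$, which is false: the top--order seminorm on the reference element vanishes on $P_{m-1}$ (for $m\ge2$ on all of $P_1$) and cannot control the full norm. Carried out correctly, your argument only yields the exponent $-k-2(1/q-1/p)$, i.e.\ it loses a factor $h^{m}$ against the stated bound. This loss is not cosmetic: the lemma is invoked in the paper with $m=k=1$, $p=\infty$, $q=2$ (proof of Lemma~\ref{lemma:w1i_to_wt_H1}), where the sharp exponent $-1$ is needed rather than $-2$.

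To recover the stated exponent one must argue seminorm by seminorm instead of through a single norm equivalence on $\widehat E$. For each $0\le i\le k$ set $j=\min(i,m)$; on the reference element $\lvert\widehat\varphi\rvert_{W^{i,p}(\widehat E)}\le C\lvert\widehat\varphi\rvert_{W^{j,q}(\widehat E)}$ for $j\ge1$ (both sides vanish on $P_{j-1}$, and the right--hand side is a norm on the finite--dimensional quotient $P_1/P_{j-1}$), and $\lvert\widehat\varphi\rvert_{W^{i,p}(\widehat E)}\le C\lVert\widehat\varphi\rVert_{L^{q}(\widehat E)}$ for $j=0$. Scaling both sides gives $\lvert\varphi_h\rvert_{W^{i,p}(E_h)}\le C h^{\,j-i-2(1/q-1/p)}\lvert\varphi_h\rvert_{W^{j,q}(E_h)}$, and since $j-i\ge m-k$ for every $i\le k$ with this choice of $j$, summing over $i$ yields the claimed bound. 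You should also record the restriction $0\le m\le k$ (and $q\le p$) under which the statement is meant, as in \cite{1980_stw}: for $m>k$ the inequality is false, as one sees already by testing with a constant function.
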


\begin{lemma}%[Improved inverse estimate]
  \label{lemma:improved_inverse_estimate}
  There exists $c>0$ with
  \begin{equation*}
%    \Lnorm[\surface(t)][\infty]{\phi^{l}_{h}} \leq \frac{1}{V} \Big\lvert \int_{\surface(t)}
%    \phi^{l}_{h}(y) \mathrm{d}V(y) \Big\rvert + c \abs{\log
%    h}^{1/2} \Lnorm[\surface(t)][2]{\nbg \phi^{l}_{h}}.
    \Lnorm[\surface(t)][\infty]{\vphi_{h}} \leq \biggl\lvert
    \frac{1}{V}\int_{\surface(t)} \vphi_{h}(y) \mathrm{d}V(y)\biggr\rvert + c
    \abs{\log h}^{1/2} \Lnorm[\surface(t)][\infty]{\nbg
    \vphi_{h}}.
  \end{equation*}
\end{lemma}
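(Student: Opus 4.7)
Set $\bar{\varphi}_h := V^{-1}\int_{\surface(t)} \varphi_h \, \mathrm{d}V$ and let $\psi_h := \varphi_h - \bar{\varphi}_h$, so that $\int_{\surface(t)} \psi_h = 0$ and $\nbg \psi_h = \nbg \varphi_h$. By the triangle inequality, it is enough to establish the mean-zero estimate
\begin{equation*}
  \Lnorm[\surface(t)][\infty]{\psi_h} \leq c \abs{\log h}^{1/2} \Lnorm[\surface(t)][\infty]{\nbg \varphi_h}.
\end{equation*}
This reduces the problem to a Sobolev-type bound for mean-zero finite element functions on the two-dimensional surface $\surface(t)$.

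The plan is to combine a continuous Poincar\'e--Sobolev embedding on $\surface(t)$ with the inverse estimate of Lemma~\ref{lemma:inverse_estimate}, and then optimise over the Lebesgue exponent. In dimension two the Gagliardo--Nirenberg--Sobolev inequality, applied to the mean-zero function $\psi_h$ and combined with Poincar\'e, yields for every $2 \leq p < \infty$ a bound of the form
\begin{equation*}
  \Lnorm[\surface(t)][p]{\psi_h} \leq c \sqrt{p}\, \Lnorm[\surface(t)][2]{\nbg \psi_h},
\end{equation*}
with a constant independent of $p$ and (by smoothness of $\Phi$) uniform in $t$. Lemma~\ref{lemma:inverse_estimate} applied element-wise with $k=m=0$, $q=p$ and $p=\infty$, transferred to $\surface(t)$ via the lift equivalence \eqref{eq:66} and the bound on $\delta_h$, gives
\begin{equation*}
  \Lnorm[\surface(t)][\infty]{\psi_h} \leq c\, h^{-2/p}\, \Lnorm[\surface(t)][p]{\psi_h}.
\end{equation*}
Chaining the two inequalities produces $\Lnorm[\surface(t)][\infty]{\psi_h} \leq c\, h^{-2/p} \sqrt{p}\, \Lnorm[\surface(t)][2]{\nbg \psi_h}$. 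Choosing $p = 2\abs{\log h}$ makes $h^{-2/p} = e$ bounded and yields the factor $\sqrt{p} = \sqrt{2}\,\abs{\log h}^{1/2}$. A final H\"older step, using that $\surface(t)$ has bounded area, gives $\Lnorm[\surface(t)][2]{\nbg \psi_h} \leq c\, \Lnorm[\surface(t)][\infty]{\nbg \varphi_h}$, which completes the proof.

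The main obstacle I anticipate is verifying that the Gagliardo--Nirenberg constant really grows like $\sqrt{p}$ with a bound that is \emph{uniform in $t \in [0,T]$}; this requires uniform geometric control (bounded curvature, injectivity radius, volume from below) along the evolution, which in turn follows from the smoothness of the dynamical system $\Phi\colon \surface_0 \times [0,T] \to \R^{m+1}$. One also has to check that the constants in the inverse estimate and in the lift transfer remain independent of $t$ and $h$, but this is routine from the geometric estimates recalled in Section~\ref{sec:geom-estim-bilin}. An alternative route, via a regularised Green's function for $-\lb$ with $\Lnorm[\surface(t)][2]{\nbg g} \lesssim \abs{\log h}^{1/2}$, would give the same factor and could be used if the uniform Sobolev constant proves delicate.
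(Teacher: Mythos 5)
Your argument is correct, but it is a genuinely different route from the paper's. The paper's proof (a one-line reference) follows Schatz--Thom\'ee--Wahlbin: it uses the Green's function of Theorem~\ref{thm:green}, writes $\vphi_{h}(x)-\bar\vphi_{h}=\int_{\surface(t)}\nbg^{y}\green(t;x,y)\cdot\nbg\vphi_{h}(y)\,\d y$ after integration by parts, and then exploits $\abs{\nbg^{x}\green}\leq c/\riemdist(x,y)$ together with integration in geodesic polar coordinates (Section~\ref{sec:integr-with-polar}); the factor $\abs{\log h}^{1/2}$ arises as $\bigl(\int_{h}^{R}r^{-2}\,r\,\d r\bigr)^{1/2}$ after a Cauchy--Schwarz step, with the near-singular region $\riemdist(x,y)\leq h$ handled by an inverse estimate on the element containing $x$. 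You instead avoid the Green's function entirely: mean-zero reduction, the two-dimensional Sobolev embedding with explicit constant $\Lnorm[\surface(t)][p]{\psi_h}\leq c\sqrt{p}\,\Lnorm[\surface(t)][2]{\nbg\psi_h}$, the inverse estimate of Lemma~\ref{lemma:inverse_estimate} with exponent $h^{-2/p}$, and the optimisation $p\sim\abs{\log h}$. This is a standard and perfectly valid alternative; it buys you independence from the Green's function construction of Appendix~A, at the price of needing the \emph{sharp} $\sqrt{p}$-growth of the Sobolev constant uniformly in $t$ (via Moser--Trudinger or local charts with uniform geometry). That is the one point to be careful about: the paper itself, in the proof of Lemma~\ref{lemma:wt_eigenvalues}, only invokes the cruder bound $\Lnorm[\surface(t)][q]{f}\leq cq\Hnorm[\surface(t)]{f}$, and with linear rather than square-root growth your chain would only produce $\abs{\log h}$, which would degrade Lemma~\ref{lemma:discrete_greens_function} and Proposition~\ref{proposition:weak_max} to $\abs{\log h}^{2}$. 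So you must genuinely establish the $\sqrt{p}$ version, uniformly in $t$, as you correctly flag. Finally, note that the lemma is actually applied in Lemma~\ref{lemma:discrete_greens_function} with $\Hnorm[\surface_{h}(s)]{G^{s,x}_{h}}$ on the right-hand side; both the paper's argument and yours deliver that stronger $L^{2}$-gradient form before your last H\"older step, and it is that form one should record.
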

\begin{proof}
Follow the steps in Schatz, Thom\'{e}e, Wahlbin \cite{1980_stw} using the
Green's function
from Theorem~\ref{thm:green} and calculating with geodesic polar coordinates.
\end{proof}

%By $\ipoll\colon H^{1}\bigl(\Ga (t)\bigr) \to S_{h}^{l}(t)$ we denote the usual finite element interpolation operator, having the error estimate below.
%\begin{lemma}[\cite{DziukElliott_acta}]
%\label{lemma: interpolation error}
%    For $m \leq 3$, there exists a constant $c>0$ independent of $h$ and $t$ such that for $u\in H^{2}\bigl(\Ga(t)\bigr)$:
%    \begin{align*}
%      \lVert u - \ipoll  u \rVert_{L^{2}(\Gat)}
%      & + h \lVert \nbg( u - \ipoll  u) \rVert_{L^{2}(\Gat)} \\
%      & \leq\ c h^2 \Big( \|\nbg^2 u\|_{L^2(\Gat)} + h\|\nbg u\|_{L^2(\Gat)} \Big).
%%        \lVert u - \ipoll  u \rVert_{L^{2}(\Gat)} + h \lVert \nbg( u - \ipoll  u) \rVert_{L^{2}(\Gat)} \leq&\ c h^{2} \|u\|_{H^{2}(\Gat)}.
%    \end{align*}
%\end{lemma}

\subsection{Estimates for an $L^2$-projection}
\label{sec:estimates-an-l2}

This section shows some technical results for the $L^2$-projection, which is denoted by $\Lproj$ (in contrast with the Ritz map which will be denoted by $\lRmap$).

\begin{definition}[$L^{2}$-projection]
  \label{definition:L2_proj}
  We define $\Lproj(t)\colon L^{2}\bigl(\surface_{h}(t)\bigr) \to S_{h}(t)$ as
  follows: Let $u_{h} \in L^{2}\bigl(\surface_{h}(t)\bigr)$ be given.  Then
  there exits a unique finite element function $\Lproj(t)u\in S_{h}(t)$ such
  that for all $\phi_{h}\in S_{h}(t)$ it holds
  \begin{align}
    \label{eq:47}
    m_{h}\bigl( \Lproj(t)u_{h}, \phi_{h}\bigr)
    = m_{h}( u_{h}, \varphi_{h}).
  \end{align}
\end{definition}

The following important $L^{p}$-stability bound and exponential decay property
from Douglas, Dupont and Wahlbin \cite[equation~(6) and (7)]{1975_ddw} holds
without any serious modification.

\begin{theorem}
  \label{theorem:old_L2_bound}
  For $p\in [1,\infty]$ let $u_{h}\in L^{p}\bigl(\surface_{h}(t)\bigr)$.  Then
  there exists a constant $c>0$ independent of $h$ and $t$ such that
  \begin{equation*}
    \Lnorm[\surface_{h}(t)][p]{\Lproj(t)u_{h}}
    \leq c \Lnorm[\surface_{h}(t)][p]{u_{h}}.
  \end{equation*}
  Further there exists $c_{2},c_{3}>0$ independent of \(h\)
  and \(t\) such that for $A_{h}^{1}(t)$ and $A_{h}^{2}(t)$ disjoint subsets of
  $\surface_{h}(t)$ with $\supp(u_{h}) \subseteq A_{h}^{1}$ we have
  \begin{equation}
    \label{eq:14}
    \Lnorm[A_{h}^{2}(t)]{\Lproj(t) u_{h}} \leq c_{2} e^{-c_{3}
      \dist_{h}(A^{1}_{h},A^{2}_{h})h^{-1}} \Lnorm[A_{h}^{1}(t)]{u_{h}},
  \end{equation}
  where $\dist_h(x,y)=\dist_{\surface_{h}(t)}(x,y)$ is the intrinsic Riemannian
  distance of $\surface_{h}(t)$.
\end{theorem}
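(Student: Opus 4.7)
The plan is to adapt the Euclidean arguments of Douglas, Dupont and Wahlbin~\cite{1975_ddw} to the discrete surface setting. The essential ingredients — quasi-uniformity of $\mathcal{T}_h(t)$, locality of the nodal basis $\{\chi_i\}$, inverse estimates and local bounds on the mass matrix — all transfer with constants uniform in $t$ and $h$, because admissibility of the triangulation provides uniform mesh regularity and the dynamical system $\Phi$ is smooth. No semigroup or global structure of $\surface_h(t)$ enters.

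For the $L^p$-stability bound, the case $p=2$ is immediate because $\Lproj(t)$ is orthogonal with respect to $m_h$. The endpoint $p=\infty$ is the crucial one: writing $\Lproj(t)u_h = \sum_i c_i \chi_i$, the coefficients solve $Mc=b$ with $M_{ij}=m_h(\chi_i,\chi_j)$ and $b_j=m_h(u_h,\chi_j)$. A routine computation gives $|b_j|\leq c h^m \Lnorm[\surface_h(t)][\infty]{u_h}$, while the scaled mass matrix $h^{-m}M$ is symmetric positive definite with uniformly bounded condition number and bandwidth, so a classical argument yields that its inverse has entries decaying exponentially in the graph distance of the mesh, producing $\max_i \sum_j \bigabs{(M^{-1})_{ij}} \leq c h^{-m}$. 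Combined with the partition-of-unity property $\sum_i \chi_i \equiv 1$, this closes $L^\infty$-stability. The range $p\in(2,\infty)$ follows by Riesz--Thorin interpolation and $p\in[1,2)$ by duality using the self-adjointness of $\Lproj(t)$ in $m_h$.

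For the exponential decay, I would set $\sigma(x):=\exp(\alpha\,\dist_h(x,A_h^1)/h)$ with a small constant $\alpha>0$, let $w := \Lproj(t) u_h$, and establish the weighted stability $\Lnorm[\surface_h(t)]{\sigma w} \leq c \Lnorm[\surface_h(t)]{\sigma u_h}$. Since $\sigma \equiv 1$ on $A_h^1 \supseteq \supp u_h$ while $\sigma \geq \exp(\alpha\dist_h(A_h^1,A_h^2)/h)$ on $A_h^2$, this immediately yields~\eqref{eq:14}. Testing the orthogonality relation $m_h(w-u_h,\phi_h)=0$ with $\phi_h=\ipol(\sigma^2 w)\in S_h(t)$ leads, after adding and subtracting $m_h(u_h,\sigma^2 w)$, to
\begin{equation*}
  \Lnorm[\surface_h(t)]{\sigma w}^2
  = m_h(u_h,\sigma^2 w)
  + m_h\bigl(w - u_h,\, \sigma^2 w - \ipol(\sigma^2 w)\bigr).
\end{equation*}
The first term is controlled by $\Lnorm[\surface_h(t)]{\sigma u_h}\Lnorm[\surface_h(t)]{\sigma w}$ and Young's inequality. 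For the interpolation error in the second term, I would employ the element-wise decomposition $\sigma^2 w - \ipol(\sigma^2 w)|_E = (\sigma^2 - \sigma_E^2)w - \ipol\bigl((\sigma^2 - \sigma_E^2)w\bigr)|_E$, where $\sigma_E^2$ is the value of $\sigma^2$ at a fixed node of $E$: since $\sigma_E^2 w|_E$ is linear, it lies in $S_h(E)$ and is reproduced by $\ipol$. The bound $\bigabs{\nbgh \sigma^2} \leq (c\alpha/h)\sigma^2$ combined with $\operatorname{diam}(E) \leq c h$ gives $\Lnorm[E][\infty]{\sigma^2 - \sigma_E^2} \leq c\alpha \Lnorm[E][\infty]{\sigma^2}$, so the interpolation error carries an overall factor of $c\alpha$ that can be absorbed into the left-hand side once $\alpha$ is fixed small enough.

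The chief technical obstacle is precisely this $O(\alpha)$-recovery: the naive estimate via the standard $H^1$-interpolation bound plus an inverse inequality gives only $O(1)$, because differentiating $\sigma^2 w$ via the product rule produces a term $\sigma^2 \nbgh w$ of order $(1/h)\Lnorm[E]{\sigma^2 w}$ without any factor of $\alpha$. The gain is recovered only by subtracting the elementwise constant $\sigma_E^2$, whose product with $w$ is finite-element reproducible by $\ipol$; the remaining factor $\sigma^2 - \sigma_E^2$ is genuinely of size $\alpha$. Uniformity of all constants in $t$ follows directly from admissibility of $\mathcal{T}_h(t)$ and the smoothness of the dynamical system $\Phi$.
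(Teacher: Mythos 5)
Your proposal is correct and follows essentially the route the paper intends: the paper offers no proof of Theorem~\ref{theorem:old_L2_bound} beyond the remark that the arguments of Douglas, Dupont and Wahlbin \cite{1975_ddw} carry over ``without any serious modification'', and your sketch is a sound execution of exactly those arguments --- the weighted exponential weight with elementwise-constant subtraction (superapproximation) for the decay estimate, and mass-matrix locality plus duality and interpolation for $L^{p}$-stability --- together with the correct observation that uniformity in $t$ comes from the admissibility of $\Th(t)$ and the smoothness of $\Phi$. (Only read ``bounded bandwidth'' as locality of the mass matrix's sparsity graph, which is what your exponential-decay-in-graph-distance step actually uses on an unstructured surface mesh.)
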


% An easy consequence, which uses the projection property,
% $\Lproj(t) \phi_{h} = \phi_{h}$, is the following corollary.

% \begin{corollary}
%   \label{corollary:old_L2_eror}
%   Let $p,u_{h}$ be like in Theorem~\ref{theorem:old_L2_bound}.  Then it holds
%   \begin{align*}
%     \Lnorm[\surface_{h}(t)][p]{u_{h} - \Lproj(t)u_{h}}
%     \leq c \inf_{\phi_{h}\in S_{h}(t)} \Lnorm[\surface_{h}(t)][p]{u_{h} -
%     \phi_{h}}.
%   \end{align*}
% \end{corollary}

For the proof of our discrete weak maximum principle we are going to use a
different weight function then \eqref{eq:12}.  Let \([0,T]\to \R^{m+1}, t
\mapsto y(t)\) be a curve with the property \(y(t)\in \surface(t)\).  In the
following we write \(y\) instead of \(y(t)\).  We define
\begin{equation}
  \label{eq:10}
  \pwt(x) := \pwt^{y}(x) := \pwt(x,y):=  \bigl( \abs{x-y}^2 +
  h^{2}\bigr)^{1/2}.
\end{equation}
We gather some estimates concerning \(\pwt\) in the next lemma.

\begin{lemma}
  \label{lemma:rho_calculation}
  There exists a constant $c>0$ independent of \(t\) and \(h\) such that the
  following estimates hold
  % for a fixed $t\in [0,T]$ and
  % $x\in \surface(t)$ the following estimates holds for $\rho = \rho^{x}$:
  \begin{gather}
    \label{eq:67}
    \Lnorm[\surface(t)][\infty]{\dellmat \pwt}
    \leq c, \qquad
    \Lnorm[\surface(t)][\infty]{\dellmat_{h} \pwt}
    \leq c,\\
    \label{eq:68}
    \Lnorm[\surface(t)][\infty]{ \nbg \pwt} \leq 1,\qquad
    \abs{ \nbg^{2} \pwt} \leq c \biggl(\frac{1}{\pwt} + 1 \biggr), \qquad
    \Lnorm[\surface(t)][\infty]{ \nbg^{2} (\pwt^{2})} \leq c.
  \end{gather}
  % For $\pwt^{-l}$ we have the estimates:
  % \begin{align*}
  %   \Lnorm[\surface_{h}(t)][\infty]{\dellmat_{h} \pwt^{-l}} &\leq \Lnorm[\surface_{h}(t)][\infty]{V_{h}}, \qquad
  %   \Lnorm[\surface_{h}(t)][\infty]{ \nbgh \pwt^{-l}} \leq c.
  % \end{align*}
  % For each triangle $E_h\subset \surface_{h}(t)$ we have the estimate
  % \begin{align*}
  %   \abs{ \nbgh^{2} \pwt^{-l}}
  %   &\leq c \biggl(\frac{1}{\pwt^{-l}} +h + h^{2}\biggr), \qquad
  %   \Lnorm[E_h][\infty]{ \nbgh^{2} \big((\pwt^{2})^{-l}\big)} \leq c.
  % \end{align*}
\end{lemma}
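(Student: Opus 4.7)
The proof will be a direct computation, starting from the ambient formula $\sigma^{2}(x) = |x-y|^{2} + h^{2}$ and then converting each ambient derivative into its tangential or material counterpart. The key observations are: $|x-y|\leq \sigma$, $|x-y|$ is bounded by the ambient diameter of the space-time tube, and $y(t)\in\surface(t)$ moves with the surface velocity so that $\dot y(t) = v(y(t),t)$.

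First I would establish the gradient bound in \eqref{eq:68}. Since $\nabla\sigma = (x-y)/\sigma$ in the ambient sense, tangential projection only decreases the norm, giving $|\nbg\sigma|\leq |x-y|/\sigma\leq 1$. For the Hessian estimate on $\sigma$, I would compute the ambient Hessian
\[
\nabla^{2}\sigma = \sigma^{-1}\bigl(I - \sigma^{-2}(x-y)\otimes(x-y)\bigr),
\]
whose operator norm is bounded by $c\sigma^{-1}$. Combining this with the standard identity $\nbg^{2}f = P(\nabla^{2}\bar f)P - H(\nu,\nu)\nu\cdot\nabla\bar f + (\text{shape-operator terms})\nabla\bar f$ (where $P=I-\nu\otimes\nu$) and the already-proved bound $|\nb\sigma|\leq 1$, one obtains $|\nbg^{2}\sigma|\leq c(\sigma^{-1}+1)$, since the curvature of $\surface(t)$ is uniformly bounded in $t$. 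For the Hessian of $\sigma^{2}$, the ambient Hessian is simply $2I$ and $\nabla(\sigma^{2}) = 2(x-y)$ is bounded by the diameter of the tube, so the same formula yields $\Linftyy[\surface(t)]{\nbg^{2}(\sigma^{2})}\leq c$.

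For the material derivative bounds \eqref{eq:67}, I would work with $\sigma^{2}$ to avoid dividing by $\sigma$. Since $y(t)\in\surface(t)$ moves with velocity $v(y,t)$,
\[
\mat(\sigma^{2}) = 2(x-y)\cdot\bigl(v(x,t)-v(y,t)\bigr),
\]
and by the Lipschitz continuity of $v$ (which follows from $v\in W^{1,\infty}$ on the tube) this is bounded by $c|x-y|^{2}\leq c\sigma^{2}$. Thus $|\mat\sigma|\leq c\sigma$, and because $\sigma$ is itself uniformly bounded by the diameter of the tube plus $h$, we conclude $|\mat\sigma|\leq c$. For the discrete version I would decompose
\[
v_{h}(x,t)-v(y,t) = \bigl(v_{h}(x,t)-v(x,t)\bigr) + \bigl(v(x,t)-v(y,t)\bigr),
\]
absorb the first term via \eqref{eq: velocity L infty bound} (giving an $O(h^{2})$ factor) and treat the second by Lipschitz continuity of $v$ as before, yielding $|\mat_{h}\sigma^{2}|\leq c\sigma^{2}$ and hence $|\mat_{h}\sigma|\leq c$.

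The only mildly delicate point is the conversion between ambient and tangential Hessians: one must remember that the second-fundamental-form terms introduce lower-order contributions of the form $|\nb\sigma|$, which is precisely why the $+1$ appears in $|\nbg^{2}\sigma|\leq c(\sigma^{-1}+1)$. Everything else is straightforward bookkeeping, provided one works with $\sigma^{2}$ rather than $\sigma$ whenever differentiating, so as never to divide by a quantity that is only comparable to $h$.
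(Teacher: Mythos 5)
Your proposal is correct, and since the paper omits the proof with the remark that it is ``a straightforward calculation,'' your direct computation (ambient gradient/Hessian of $\pwt$ and $\pwt^2$, projection onto the tangent space with uniformly bounded curvature corrections, and $\mat\pwt^2 = 2(x-y)\cdot(v(x,t)-\dot y(t))$ together with $h\le\pwt$ and \eqref{eq: velocity L infty bound} for the discrete version) is exactly the intended argument. The only point worth flagging is that the paper merely assumes $y(t)\in\surface(t)$, whereas you correctly make explicit the implicit assumption that $y$ follows the material flow (or at least has bounded velocity), which is needed for \eqref{eq:67} and is consistent with how $\pwt$ is used in the proof of Lemma~\ref{lemma:technical_weak_max}.
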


The proof of this lemma is a straightforward calculation and is omitted here.

\begin{lemma}
  \label{lemma:weighted_l_two_proj_err}
  There exists $c>0$ such for fixed $t\in [0,T]$, $x_{h}\in \surface_{h}(t)$,
  $\pwt = \pwt^{x_{h}}$,  $\phi_h \in S_h (t)$ and $\psi_h :=
  \Lproj ( \pwt^{2} \varphi_{h})$ the following inequality holds:
  \begin{multline*}
    \Lnorm[\surface_{h}(t)]{\pwt^{2}\phi_{h} - \psi_{h}} + h
    \Lnorm[\surface_{h}(t)]{\nbgh(\pwt^{2}\phi_{h} - \psi_{h})} \\
    \leq c h^{2}\big(\Lnorm[\surface_{h}(t)]{\phi_{h}}
    + \Lnorm[\surface_{h}(t)]{\pwt \nbgh\phi_{h}} \big).
  \end{multline*}
\end{lemma}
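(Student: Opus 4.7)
The plan is to compare $\psi_h = \Lproj(\pwt^2 \phi_h)$ to the Lagrange interpolant $\ipol(\pwt^2 \phi_h) \in S_h(t)$ via the splitting
\[
  \pwt^2 \phi_h - \psi_h = \bigl(\pwt^2 \phi_h - \ipol(\pwt^2 \phi_h)\bigr) + \bigl(\ipol(\pwt^2 \phi_h) - \psi_h\bigr),
\]
and then to apply (a) an element-wise $H^2$-interpolation estimate to the first summand and (b) $L^2$-optimality of $\Lproj$ combined with an inverse estimate to the second summand. This reduces everything to estimating the piecewise second tangential derivative of $\pwt^2 \phi_h$.

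First I would compute $\nbgh^2(\pwt^2 \phi_h)$ on a single simplex $E \in \Th(t)$. Because $\phi_h$ is affine on $E$, one has $\nbgh^2 \phi_h \equiv 0$ there, so by the Leibniz rule
\[
  \nbgh^2(\pwt^2 \phi_h) = \phi_h\,\nbgh^2(\pwt^2) + 2\,\nbgh(\pwt^2) \otimes_{s} \nbgh \phi_h.
\]
Using Lemma~\ref{lemma:rho_calculation} (whose bounds transfer to $\Gah$ because $\pwt$ is a smooth ambient function and $\nbgh$ just projects its ambient gradient), one gets $|\nbgh(\pwt^2)| = 2\pwt|\nbgh \pwt| \leq 2\pwt$ and $|\nbgh^2(\pwt^2)| \leq c$, hence the pointwise bound
\[
  \bigl|\nbgh^2(\pwt^2 \phi_h)\bigr| \leq c\,|\phi_h| + c\,\pwt\,|\nbgh \phi_h| \qquad \text{a.e.\ on } E.
\]

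Next I would apply Lemma~\ref{lemma: interpolation error} element-wise with $p=2$ to the piecewise $H^2$-function $\pwt^2 \phi_h$, then square, sum over $E$, and take square roots, obtaining
\[
  \Lnorm[\Gah]{\pwt^2 \phi_h - \ipol(\pwt^2 \phi_h)} + h\,\Lnorm[\Gah]{\nbgh(\pwt^2 \phi_h - \ipol(\pwt^2 \phi_h))} \leq c h^2 \bigl(\Lnorm[\Gah]{\phi_h} + \Lnorm[\Gah]{\pwt\, \nbgh \phi_h}\bigr).
\]
For the $L^2$-part of the claim, the $L^2$-optimality of $\Lproj$ gives
$\Lnorm[\Gah]{\pwt^2 \phi_h - \psi_h} \leq \Lnorm[\Gah]{\pwt^2 \phi_h - \ipol(\pwt^2 \phi_h)}$, which is controlled by the display above. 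For the gradient part, the triangle inequality reduces matters to bounding $\nbgh(\ipol(\pwt^2 \phi_h) - \psi_h)$, which lies in $S_h(t)$; by the inverse estimate of Lemma~\ref{lemma:inverse_estimate},
\[
  \Lnorm[\Gah]{\nbgh(\ipol(\pwt^2 \phi_h) - \psi_h)} \leq c h^{-1}\Lnorm[\Gah]{\ipol(\pwt^2 \phi_h) - \psi_h},
\]
and the right-hand side is bounded by $\Lnorm[\Gah]{\ipol(\pwt^2 \phi_h) - \pwt^2 \phi_h} + \Lnorm[\Gah]{\pwt^2 \phi_h - \psi_h}$ via the triangle inequality, both already controlled.

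The main difficulty is not in any single estimate but in a piece of book-keeping: $\phi_h$ is only $C^0$ across element boundaries so the second tangential derivative of $\pwt^2 \phi_h$ is globally a distribution with edge singularities, and one has to verify that Lemma~\ref{lemma: interpolation error}'s element-wise application is legitimate and that the Leibniz expansion above is valid only in the open interior of each $E$. A second small point is justifying that the bounds of Lemma~\ref{lemma:rho_calculation}, originally stated on the smooth surface $\surface(t)$, remain valid when $\nbg$ is replaced by $\nbgh$ on the piecewise flat $\Gah$; this follows because $\pwt$ extends smoothly to a neighbourhood of both surfaces and $\nbgh$ is a projection of the ambient gradient, at the cost of an $O(h)$ geometric error which is absorbed into the constant.
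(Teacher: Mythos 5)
Your proposal is correct and follows essentially the same route as the paper, whose proof is just a pointer to the argument of Schatz--Thom\'ee--Wahlbin \cite[Lemma~1.4]{1980_stw}: compare $\psi_h$ with the interpolant $\ipol(\pwt^2\phi_h)$, use the element-wise interpolation estimate together with the fact that $\nbgh^2\phi_h=0$ on each flat simplex and the bounds $\abs{\nbgh\pwt}\leq 1$, $\abs{\nbgh^2(\pwt^2)}\leq c$ from Lemma~\ref{lemma:rho_calculation}, then conclude via $L^2$-optimality of $\Lproj$ and the inverse estimate. Your closing remarks on the piecewise-only regularity of $\pwt^2\phi_h$ and on transferring the weight-function bounds to $\Gamma_h(t)$ are exactly the points the paper addresses by citing \eqref{eq:42}.
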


\begin{proof}
  Consider a triangle $E_h \subset \surface_{h}(t)$ and set $g_{h}:=
  \ipol (\pwt^{2}\phi_{h})$. Use
  Lemma~\ref{lemma:rho_calculation} and \eqref{eq:42}
  % \ref{lemma: rho equivalence}
  and follow the steps in Schatz, Thom\'{e}e and Wahlbin
  \cite[Lemma~1.4]{1980_stw}.
\end{proof}

\section{A Ritz map and some error estimates}
% \section{A Ritz map for evolving surface problems}
\label{sec:ritz-map-some}

Just as in the usual $L^2$-theory the Ritz map plays a very important role for our
$L^\infty$-error estimates. This section is devoted to the careful $L^\infty$- and weighted norm analysis of the errors in the Ritz map.

\begin{definition}[Ritz map, \cite{LubichMansour_wave}]
%\label{definition:modified_Ritz_map}
  We define $\Rmap(t)\colon H^{1}\bigl(\surface(t)\bigr) \to S_{h}(t)$ as
  follows: Let $u \in H^{1}\bigl(\surface(t)\bigr)$ be given.  Then there exits
  a unique finite element function $\Rmap(t)u\in S_{h}(t)$ such that for all
  $\phi_{h}\in S_{h}(t)$ with $\varphi_{h}= \phi_{h}^{l}$ it holds
  \begin{align}
    \label{eq:39}
    \aast_{h}\bigl( \Rmap(t)u, \phi_{h}\bigr)
    = \aast( u, \varphi_{h}).
  \end{align}
  This naturally defines the Ritz map on the continuous surface:
  \begin{equation*}
    \lRmap(t)u = \big(\Rmap(t)u\big)^{l}  \in S_{h}^{l}(t) .
  \end{equation*}
\end{definition}
Note that the Ritz map does not satisfy the Galerkin orthogonality, however it satisfies, using \eqref{eq:61}, the following estimate, cf.~\cite{LubichMansour_wave}. For all $\varphi_{h}\in S_{h}^{l}(t)$ we have
\begin{equation}
  \label{eq:49}
  \abs{\aast\bigl( u - \lRmap(t)u, \varphi_{h}\bigr)}
  \leq ch^{2} \wtH{\lRmap(t)u} \wtH[-\alpha]{\varphi_{h}} .
\end{equation}

In this section we aim to bound the following errors of the Ritz map:
\begin{align*}
  u - \lRmap(t)u
  \quad \text{and}
  \quad
  \mat_{h}\bigl( u -
  \lRmap(t)u \bigr),
\end{align*}
in the $L^{\infty}$- and $W^{1,\infty}$-norms.  Previously, $H^{1}$- and
$L^{2}$-error estimates have been shown in \cite{DziukElliott_ESFEM,DziukElliott_L2}.

% \bigskip \par
% $\Rmap(t)$ is not necessary a projection, e.g.\ for $\phi_{h}\in S_{h}(t)$ with
% $\varphi_{h} := \phi_{h}^{l}$ in general we have
% $\Rmap(t)\varphi_{h} \neq \phi_{h}$.  Since the work of Nitsche \cite{Nitsche}
% rely on this fact, we introduce for our evolving surface problem an auxiliary
% Ritz projection.
% \begin{definition}[Ritz projection]
%   We define $\Rproj(t)\colon H^{1}\bigl(\surface(t)\bigr) \to S_{h}^{l}(t)$ as
%   follows: Let $u \in H^{1}\bigl(\surface(t)\bigr)$ be given.  Then there exits
%   a unique lifted finite element function $\Rproj(t)u\in S_{h}^{l}(t)$ such that
%   for all $\varphi_{h}\in S_{h}^{l}(t)$ it holds
%   \begin{align}
%     \label{eq:9}
%     \aast\bigl( \Rproj(t)u, \varphi_{h}\bigr)
%     = \aast( u, \varphi_{h}).
%   \end{align}
% \end{definition}
% Our strategy is to first bound the quantities
% \begin{align*}
%   \Lnorm[\surface(t)][\infty]{u - \Rproj(t)u}
%   \quad \text{and} \quad
%   \Wnorm[\surface(t)][1,\infty]{u - \Rproj(t)u},
% \end{align*}
% by generalizing Nitsches weighted norm approach and his Ritz projection
% stability theorem.

\subsection{Weighted a priori estimates}
%\label{sec:weight-norms-auxill}

Before turning to the maximum norm error estimates, we state and prove some technical regularity results involving weighted norms.

\begin{lemma}[Weighted a priori estimates]
\label{lemma:wt_ellliptic}
  For $f\in L^{2}\bigl(\surface(t)\bigr)$, the problem
  \begin{align*}
    -\lb w + w = f \qquad \textrm{ on } \, \surface(t),
  \end{align*}
  has a unique weak solution $w\in H^{1}(\surface(t))$. Furthermore, $w\in
  H^{2}\bigl(\surface(t)\bigr)$ and we have the following weighted a priori
  estimates
  \begin{align}
    \label{eq:26}
    \wtH[-1]{w}
    &\leq c \bigl( \wtL[-1]{f} + \Lnorm{w} \bigr)\\
    \label{eq:24}
    \wtHk[2][-1]{w}
    &\leq c\bigl( \wtL[-1]{f} + \Hnorm{w}\bigr),
  \end{align}
  where the constant $c>0$ is independent of $t,h$ and $\gamma$.
\end{lemma}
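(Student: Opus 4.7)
The bilinear form $a(t;\cdot,\cdot) + m(t;\cdot,\cdot)$ is continuous and coercive on $H^{1}\bigl(\surface(t)\bigr)$, so Lax--Milgram delivers a unique weak solution $w \in H^{1}$. Standard elliptic regularity on the smooth closed hypersurface $\surface(t)$ then promotes $w$ to $H^{2}$ with $\Hnorm[\surface(t)][2]{w} \leq c(\Lnorm[\surface(t)]{f} + \Lnorm[\surface(t)]{w})$. All remaining work is a quantitative refinement incorporating the weight $\mu$.

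\textbf{Weighted $H^{1}$ estimate.} For \eqref{eq:26} I would test the weak equation with $v = \mu w$ and expand $\nbg(\mu w) = w\, \nbg \mu + \mu\, \nbg w$, producing
\begin{equation*}
\wtL[-1]{\nbg w}^{2} + \wtL[-1]{w}^{2} + \int_{\surface(t)} w \,\nbg w \cdot \nbg \mu = m(f,\mu w).
\end{equation*}
A direct computation from \eqref{eq:12} gives the pointwise bound $|\nbg \mu| \leq 2|x-y| \leq 2\sqrt{\mu}$, so Cauchy--Schwarz followed by Young's inequality absorbs the cross term into $\epsilon \wtL[-1]{\nbg w}^{2} + C_{\epsilon} \Lnorm[\surface(t)]{w}^{2}$. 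The right-hand side is controlled by $\wtL[-1]{f}\wtL[-1]{w}$ via a weighted Cauchy--Schwarz, and a further Young absorbs $\wtL[-1]{w}^{2}$ into the left. Rearranging produces \eqref{eq:26}.

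\textbf{Weighted $H^{2}$ estimate, and the main obstacle.} Rewriting the PDE as $-\lb w = f - w$, I would bound $\wtL[-1]{\nbg^{2} w}^{2}$ by exploiting the weighted Bochner identity on the closed surface $\surface(t)$: integrating the pointwise identity $\tfrac{1}{2}\lb|\nbg w|^{2} = |\nbg^{2} w|^{2} + \nbg w \cdot \nbg \lb w + \operatorname{Ric}(\nbg w, \nbg w)$ against $\mu$ and applying integration by parts (no boundary terms since $\surface(t)$ is closed) yields an identity relating $\int \mu |\nbg^{2} w|^{2}$ to $\wtL[-1]{\lb w}^{2}$ plus correction terms carrying $\nbg \mu$, $\lb \mu$ and the Ricci tensor. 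The delicate step is absorbing these corrections while keeping all constants independent of $t$, $h$ and $\gamma$: the pointwise bounds $|\nbg \mu| \leq 2\sqrt{\mu}$, $|\lb \mu| \leq C$, together with the bounded curvature of the smooth closed surface, allow each correction term to be controlled either by $\epsilon \wtL[-1]{\nbg^{2} w}^{2}$ (absorbed via Young with small $\epsilon$) or by $\wtH[-1]{w}^{2}$ (handled by \eqref{eq:26}). The principal term $\wtL[-1]{\lb w}^{2} = \wtL[-1]{f-w}^{2}$ is bounded by $2\wtL[-1]{f}^{2} + 2\wtL[-1]{w}^{2}$, and $\wtL[-1]{w} \leq C\Lnorm[\surface(t)]{w} \leq C\Hnorm[\surface(t)]{w}$ finishes the control of the remainder, yielding \eqref{eq:24}.
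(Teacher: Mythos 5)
Your proof of \eqref{eq:26} is essentially the paper's: the authors also obtain it from integration by parts (i.e.\ testing with $\mu w$), the pointwise bound $\abs{\nbg\mu}\leq\sqrt{\mu}$ and Young's inequality, so that part matches. For \eqref{eq:24} you take a genuinely different route. The paper reduces the weighted second-derivative bound to the \emph{unweighted} a priori estimate $\Hnorm[][2]{w}\leq c\Lnorm{-\lb w+w}$ applied to the functions $(x^{i}-y^{i})w$, $i=1,\dotsc,m+1$: since $-\lb\bigl((x^i-y^i)w\bigr)+(x^i-y^i)w=(x^i-y^i)f-2\nbg(x^i-y^i)\cdot\nbg w-(\lb(x^i-y^i))w$, summing the resulting estimates over $i$ recovers $\int\abs{x-y}^{2}\abs{\nbg^{2}w}^{2}$ up to commutator terms controlled by $\Hnorm{w}$, and the remaining piece $\rho^{2}\int\abs{\nbg^{2}w}^{2}$ is absorbed into $\wtL[-1]{f}^{2}$ because $\rho^{2}\leq\mu$ pointwise. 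This buys elementarity: only the product rule and the standard $H^2$ estimate are used, and $\gamma$-independence of the constants is immediate. Your weighted Bochner identity also works -- the cross term $\int\Delta w\,\nbg\mu\cdot\nbg w$ is handled by $\abs{\nbg\mu}\leq 2\sqrt{\mu}$, the principal term $\wtL[-1]{\lb w}^{2}$ comes from the equation, and the $\lb\mu$ and Ricci corrections are bounded via \eqref{eq:26} exactly as you say -- but it costs more: you need uniform-in-$t$ curvature bounds explicitly, a density argument to justify $\nbg w\cdot\nbg\lb w$ for $w\in H^2$ only, and you should note that the Bochner formula produces the covariant Hessian whereas the paper's $\nbg^{2}w$ is the iterated tangential gradient; the two differ by second-fundamental-form terms of size $c\abs{\nbg w}$, which must (and can) be absorbed into the $\wtH[-1]{w}$ contribution.
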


\begin{proof}
  Existence and uniqueness of a weak solution follows from \cite{Aubinbook}.
  Using integration by parts, Young inequality and
  $\abs{\nbg \mu} \leq \sqrt{\mu}$ a short calculation shows \eqref{eq:26}.
  %\kb{something was missing!?}{and then the a priori estimate below}.
  % ------------------------------------------------------------
  % Detailed calculation
  % \begin{align*}
  %   \wtH[-1]{w}^{2}
  %   & = \int \mu \bigl( w^{2} + \abs{\nbg w}^{2}\bigr)
  %     = \int \mu f w - w \nbg \mu \cdot \nbg w.
  % \end{align*}
  % Observe that $\abs{\nbg \mu} \leq \abs{x - y}$.  This lead to
  % \begin{align*}
  %   \wtH[-1]{w}^{2}
  %   & \leq \frac{1}{2} \wtL[-1]{f}^{2} + \frac{1}{2} \wtL[-1]{w}^{2}
  %     + \frac{1}{2} \wtL[-1]{\nbg w} + \frac{1}{2} \Lnorm{w}.
  % \end{align*}
  % ------------------------------------------------------------
  For the details on elliptic regularity and a derivation of the a
  priori estimate
  \[
    \Hnorm[][2]{w} \leq c \Lnorm{ - \lb w + w},
  \]
  where $c>0$ is independent of $t$, we refer to \cite[Appendix~A]{2014nonlinear}.

  Because of \eqref{eq:26} it suffices to
  prove \eqref{eq:24} for $\wtL[-1]{\nbg^{2}w}^{2}$ as the left-hand side instead
  of $\wtHk[2][-1]{w}^{2}$.  Apply the usual elliptic a priori estimate on
  $(x^{i}-y^{i}) w$ for $i=1,\dotsc, m+1$ to get the desired estimate.
  % ------------------------------------------------------------
  % Detailed calculation
  % \\
  % It holds
  % \begin{align*}
  %   \mu \abs{\nbg^{2} w}^{2}
  %   & = \sum_{i=1}^{m+1} \abs{ (x^{i}-y^{i})
  %     \nbg^{2}w}^{2} + \abs{ \rho  \nbg^{2}w }^{2}\\
  %   & = \sum_{i=1}^{m+1} \abs{ \nbg^{2}\bigl( (x-y)^{i} w\bigr) -  2 e^{i}
  %     \odot \nbg w  }^{2} + [\ldots]
  % \end{align*}
  % From that we deduce
  % \begin{align*}
  %   \wtL[-1]{\nbg^{2} w}
  %   & \leq \sum_{i=1}^{m+1}c \sqrt{ \int_{\surface(t)} \bigl( (-\lb + I)
  %     [(x-y)^{i}w]\bigr)^{2}} \\
  %   & \quad + c\sqrt{2(m+1)} \Lnorm{\nbg w} + c\wtL[-1]{f}^{2}
  % \end{align*}
  % For $L := -\lb + I$ we have the chain rule
  % \begin{align*}
  %   L(fg) = L(f) g - f \lb g - 2 \nbg f \cdot \nbg g.
  % \end{align*}
  % Note that $-\lb (x^{i}-y^{i}) = H \nu^{i}$ is.  From that it follows
  % \begin{align*}
  %   \sum_{i=1}^{m+1}c \sqrt{\int_{\surface(t)} \bigl( (-\lb + I)
  %   [(x-y)^{i}w]\bigr)^{2}}
  %   & \leq c \Bigl( \wtL[-1]{f}^{2} + \Lnorm{w}^{2}+ \Lnorm{\nbg w}^{2} \Bigr).
  % \end{align*}
  % Putting everything together implies the claim.
  % ------------------------------------------------------------
\end{proof}

\begin{lemma}
  \label{lemma:wt_eigenvalues}
  For $g\in L^{2}\bigl(\surface(t)\bigr)$ the problem
  \begin{align*}
    - \lb w + w = \mu^{-2} g.
  \end{align*}
  has a unique weak solution  $w\in H^{1}(\surface(t))$. Furthermore, $w\in
  H^{2}\bigl(\surface(t)\bigr)$, and there exists a constant $c>0$ independent of $t$ and $h$ such that
  \begin{align}
    \label{eq:45}
    \Hnorm{w}^{2} & \leq c \rho^{-2} \abs{\log \rho} \wtL[2]{g}^{2}.
  \end{align}
\end{lemma}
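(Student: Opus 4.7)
The existence, uniqueness and $H^{2}$-regularity of $w$ are furnished by standard elliptic theory on the closed surface $\surface(t)$ applied to the right-hand side $\mu^{-2}g\in L^{2}(\surface(t))$, giving $\Hnorm[][2]{w}\leq c\,\Lnorm{\mu^{-2}g}$ exactly as inside the proof of Lemma~\ref{lemma:wt_ellliptic}. Using $\Lnorm[\surface(t)][\infty]{\mu^{-2}}\leq \rho^{-4}$ one reads off
\begin{equation*}
\Hnorm[][2]{w}^{2} \;\leq\; c\!\int_{\surface(t)}\!\mu^{-4}g^{2} \;\leq\; c\,\rho^{-4}\wtL[2]{g}^{2}.
\end{equation*}

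The main idea is a symmetric duality test. Plugging $w$ into its own equation and splitting the right-hand side factor-by-factor as $\mu^{-2}gw=(\mu^{-1}g)(\mu^{-1}w)$, Cauchy--Schwarz together with \eqref{eq:33} of Lemma~\ref{lemma:wtL2_to_Li_or_wtH1_to_W1i} applied to the smooth function $u=w$ yields
\begin{equation*}
\Hnorm{w}^{2} \;=\; \int_{\surface(t)}\mu^{-2}gw \;\leq\; \wtL[2]{g}\,\wtL[2]{w} \;\leq\; c\,\rho^{-1}\,\wtL[2]{g}\,\Lnorm[\surface(t)][\infty]{w},
\end{equation*}
so everything is reduced to an $L^{\infty}$-bound for $w$.

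For the $L^{\infty}$-bound I would use, on the closed $2$-manifold $\surface(t)$, a Brezis--Gallouet type logarithmic interpolation
\begin{equation*}
\Lnorm[\surface(t)][\infty]{w} \;\leq\; c\,\Hnorm{w}\,\bigl(1+\log\bigl(1+\Hnorm[][2]{w}/\Hnorm{w}\bigr)\bigr)^{1/2}.
\end{equation*}
Inserting the $H^{2}$-bound $\Hnorm[][2]{w}\leq c\rho^{-2}\wtL[2]{g}$ together with the previous display produces the implicit inequality
\begin{equation*}
\Hnorm{w}\;\leq\; c\,\rho^{-1}\,\wtL[2]{g}\,\bigl(1+\log(1+\rho^{-2}\wtL[2]{g}/\Hnorm{w})\bigr)^{1/2},
\end{equation*}
and a two-case argument separating $\Hnorm{w}\geq\rho\wtL[2]{g}$ (where the logarithm is bounded by $c\abs{\log\rho}$) from $\Hnorm{w}<\rho\wtL[2]{g}$ (where the desired estimate is immediate) closes the bound and produces \eqref{eq:45}.

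The critical point is to pay only \emph{one} power of $\abs{\log\rho}$ when passing from $H^{2}$-regularity to $L^{\infty}$: the brute-force Sobolev embedding $H^{2}\hookrightarrow L^{\infty}$ on the surface would cost an extra factor of $\rho^{-2}$ and miss the target. The sharp logarithmic interpolation above is precisely the tool for this; an equivalent route avoiding Brezis--Gallouet is to write $w(x)=\int_{\surface(t)} G(x,z)\mu^{-2}(z)g(z)\,\mathrm{d}A(z)$ with the Green's function from Theorem~\ref{thm:green}, apply Cauchy--Schwarz with the $\mu^{-1}\cdot\mu^{-1}$ split, and verify by an explicit geodesic-polar-coordinate calculation (splitting into the regimes $d(x,z)\lesssim\rho$ and $d(x,z)\gtrsim\rho$) that $\int_{\surface(t)}G(x,z)^{2}\mu^{-2}(z)\,\mathrm{d}A(z)\leq c\rho^{-2}\abs{\log\rho}^{2}$ uniformly in $x$. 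In either approach the $\abs{\log\rho}$ factor emerges from the 2D logarithmic singularity of the Green kernel, and this is the step I expect to be the real obstacle.
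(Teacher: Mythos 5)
Your proposal is correct, and it reaches \eqref{eq:45} by a genuinely different route than the paper. Both arguments begin, in effect, from the same energy identity $\Hnorm{w}^{2}=\int_{\surface(t)}\mu^{-2}gw\leq\wtL[2]{g}\,\wtL[2]{w}$ (the paper phrases this via the Rayleigh quotient of the weighted eigenvalue problem $-\lb f+f=\widetilde\lambda\mu^{-2}f$), but they diverge in how $\wtL[2]{w}$ is controlled. The paper stays at the $H^{1}$ level: it reduces everything to the weighted Poincar\'e-type inequality $\wtL[2]{f}^{2}\leq c\rho^{-2}\abs{\log\rho}\Hnorm{f}^{2}$ for arbitrary $f\in H^{1}$, proved by H\"older with a $\rho$-dependent exponent $q$ together with the quantitative Sobolev--Nirenberg bound $\Lnorm[\surface(t)][q]{f}\leq cq\Hnorm{f}$ and a geodesic-polar computation of $\int\mu^{-2p}$; the logarithm comes from optimizing the exponent. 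You instead detour through $L^{\infty}$, using \eqref{eq:33} and then Brezis--Gallouet with the a priori bound $\Hnorm[][2]{w}\leq c\rho^{-2}\wtL[2]{g}$, so your logarithm comes from the $H^{2}/H^{1}$ interpolation; this costs you the implicit inequality and the two-case absorption (which closes correctly: in the first case the logarithm is $O(\abs{\log\rho})$, in the second $\Hnorm{w}^{2}<\rho^{2}\wtL[2]{g}^{2}$ is far below the target), and it genuinely uses the elliptic $H^{2}$ regularity, which the paper's weighted Poincar\'e inequality does not need. The one imported ingredient is the $t$-uniformity of the Brezis--Gallouet constant on the evolving family of surfaces; this is exactly the same flavor of uniformity the paper itself asserts for its Sobolev--Nirenberg inequality, and can be justified the same way (or via the Green's function of Theorem~\ref{thm:green}, as in your alternative route, whose kernel estimate $\int G^{2}\mu^{-2}\leq c\rho^{-2}\abs{\log\rho}^{2}$ does check out and also yields \eqref{eq:45}). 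In short: same borderline two-dimensional Sobolev phenomenon, packaged at the $H^{1}$ level in the paper and at the $H^{2}$-to-$L^{\infty}$ level in your argument.
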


\begin{proof}
  Lemma~\ref{lemma:wt_ellliptic} gives us existence, uniqueness and regularity
  of $w$.
  Consider the number
  \begin{align*}
    \lambda^{-1}(t) :=
    \sup \bigl\{ \Hnorm{f}^{2} \mid f\in H^{2}\big(\surface(t)\bigr),
    \; \wtL[-2]{-\lb f + f}^{2} \leq 1\}.
  \end{align*}
  Inequality~\eqref{eq:45} is proven if we show
  \begin{align*}
    \lambda^{-1}(t) \leq c \rho^{-2} \abs{\log \rho},
  \end{align*}
  where $c$ is $t$ independent.  A short calculation shows that the smallest
  eigenvalue $\widetilde{\lambda}_{\min}(t)$ of the elliptic eigenvalue problem
  \begin{align*}
    - \lb f + f = \widetilde{\lambda} \mu^{-2} f \quad \text{on $\surface(t)$}
  \end{align*}
  is equal to $\lambda(t)$.
  The weighted Rayleight quotient implies
  \begin{align*}
    \widetilde{\lambda}_{\min}
    &= \inf_{f\in H^{1}}\frac{\Hnorm{f}^{2}}{\wtL[2]{f}^{2}}.
  \end{align*}
  Hence it suffices to prove
  \begin{align}
    \label{eq:50}
    \wtL[2]{f}^{2}
    & \leq  c \rho^{-2} \abs{\log(\rho)}\Hnorm{f}^{2},
  \end{align}
  for a $f\in H^{1}$.  With a H\"{o}lder estimate we arrive at
  \begin{align*}
    \wtL[2]{f}^{2}
    & \leq \biggl( \int_{\surface(t)} \mu^{-2p}\biggr)^{1/p}
      \biggl( \int_{\surface(t)} f^{2q}\biggr)^{1/q}
      = \biggl( \int_{\surface(t)} \mu^{-2p}\biggr)^{1/p}
      \Lnorm[\surface(t)][2q]{f}^{2},
  \end{align*}
  where $1< p,q< \infty$ satisfies $p^{-1}+q^{-1}=1$.  We take the choice
  $q= \sqrt{\abs{\log \rho}}$.
  % which implies that $p=  \frac{\abs{\log \rho}}{\abs{\log \rho} - 1}$.
  It is easy to prove the following quantitative Sobolev-Nierenberg inequality
  for moving surfaces:
  \begin{align*}
    \Lnorm[\surface(t)][q]{f} \leq c q \Hnorm[\surface(t)]{f},
  \end{align*}
  where $c$ is independent of $t$ and $q$.  A straightforward calculation with
  geodesic polar coordinates using Lemma~\ref{lemma:comparision_extr_intr_dist}
  and Lemma~\ref{lemma:geodesic_sphere} shows inequality~\eqref{eq:50}.
  % ----------------------------------------------------------------------
  % Details
  % \\ We have the estimate
  % \begin{align*}
  %   \Lnorm[][2q]{f}^{2}
  %   \leq c q^{2} \Hnorm{f}^{2} \leq c \abs{\log\rho} \Hnorm{f}^{2}.
  % \end{align*}
  % For the first term on the right-hand side we calculate
  % \begin{align*}
  %   \int \mu^{-2p}
  %   &\leq c\int_{0}^{\infty}\frac{r}{(r^{2}+ c\rho^{2})^{2p}} \d r \\
  %   & \leq c \frac{1}{2p - 1} (c \rho^{2})^{-2p + 1}.
  % \end{align*}
  % After taking the $p$-th root we get
  % \begin{align*}
  %   \biggl(\int \mu^{-2p}\biggr)^{1/p}
  %   \leq c^{1/p} (2p -1)^{-1/p} c^{(-2p +1) /p} \rho^{2( -2p +1)/p}.
  % \end{align*}
  % We have
  % \begin{align*}
  %   p
  %   & \to 1\\
  %   c^{1/p}
  %   & \to c \\
  %   (2p -1)^{1/p}
  %   & \to 1 \\
  %   c^{(-2p+1)/p}
  %   & \to c \\
  %   \rho^{2(-2p+1)/p}
  %   & = \mathcal{O}(\rho^{-2}).
  % \end{align*}
  % ----------------------------------------------------------------------
\end{proof}

\subsection{Maximum norm error estimates}
%\label{sec:error-estimates-ritz}

Before showing $L^\infty$- and $W^{1,\infty}$-norm error estimates for the Ritz map, we show similar estimates for weighted norms. Then, by connecting the norms, use these results to obtain our original goal.

Throughout this subsection, we write $\lRmap u$ instead of $\lRmap(t) u$.
\begin{lemma}
  \label{lemma:wt_ritz_error_estimate}
  There exists $h_{0}>0$ sufficiently small and $\gamma_{0}>0$ sufficiently
  large and a constant $c=c(h_{0},\gamma_{0})>0$ such that for $u\in
  W^{2,\infty}\bigl(\surface(t)\bigr)$ it holds
  \begin{equation}
    \label{eq:53}
    \wtL[2]{u-\lRmap u}^{2} + \wtH[1]{u-\lRmap u}^{2}
    \leq c h^{2} \abs{\log h} \Wnorm[\surface(t)][2,\infty]{u}^{2}.
  \end{equation}
\end{lemma}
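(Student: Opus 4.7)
\medskip
\noindent\emph{Proof proposal.} The plan is a weighted Nitsche-type argument for the \(H^{1}\) estimate followed by a weighted duality argument for the \(L^{2}\) estimate. Write
\[
e = u - \lRmap u = \eta + e_{h}, \qquad
\eta := u - \ipoll u, \qquad
e_{h} := \ipoll u - \lRmap u \in S_{h}^{l}(t).
\]
The interpolation part \(\eta\) is already bounded in both weighted norms by Lemma~\ref{lemma:wt_ipol_error_estimate}, so after triangle inequality the task reduces to bounding \(\wtL[2]{e_{h}}\) and \(\wtH[1]{e_{h}}\) by the right-hand side.

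For the weighted \(H^{1}\) estimate I would test the Ritz equation against \(\mu^{-1}e_{h}\), approximated in \(S_{h}^{l}(t)\) by \(\phi_{h} := \ipoll(\mu^{-1}e_{h})\). A direct calculation using Lemma~\ref{lemma:calculations_with_weight}\ref{item:4} gives
\[
\aast(e_{h}, \mu^{-1} e_{h}) \geq \tfrac{1}{2}\wtH[1]{e_{h}}^{2}
\]
for \(\gamma\) large (the cross term coming from \(\nbg \mu^{-1}\) is absorbed by the diagonal because \(|\nbg\mu^{-1}|\le 2\mu^{-3/2}\) and \(\mu^{-1/2}\le\rho^{-1}\) is compensated by a Young inequality against \(\gamma\)). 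Then split
\[
\aast(e_{h}, \mu^{-1}e_{h}) = \aast(e_{h},\phi_{h}) + \aast(e_{h}, \mu^{-1}e_{h} - \phi_{h}).
\]
For the first piece rewrite \(e_{h} = e - \eta\), apply the near-orthogonality \eqref{eq:49} to the \(e\)-part (producing a factor \(ch^{2}\wtH[1]{\lRmap u}\)) and the weighted Cauchy--Schwarz \eqref{eq: weighted C--S estimate} to the \(\eta\)-part, bounding \(\wtH[-1]{\phi_{h}}\) via \eqref{eq:44}. For the second piece use weighted Cauchy--Schwarz together with Lemma~\ref{lemma: weighted norm estimates}\ref{item:2}, whose key gain \((h/\rho + h)\) provides the smallness needed to absorb \(\wtH[1]{e_{h}}\) on the left. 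After absorbing, rewrite \(\wtH[1]{\lRmap u} \leq \wtH[1]{e} + \wtH[1]{u}\); the first summand is again absorbed, and the second is controlled by Lemma~\ref{lemma:wtL2_to_Li_or_wtH1_to_W1i}, yielding
\[
\wtH[1]{e_{h}}^{2} \leq c h^{2}|\log h|\,\Wnorm[\surface(t)][2,\infty]{u}^{2}.
\]

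For the weighted \(L^{2}\) estimate I would use the dual problem \(-\lb w + w = \mu^{-2} e\) on \(\surface(t)\). Then the strong-form identity gives \(\wtL[2]{e}^{2} = \aast(e,w)\), which I split as \(\aast(e, w - \ipoll w) + \aast(e, \ipoll w)\). The second summand is bounded by near-orthogonality \eqref{eq:49}; the first by weighted Cauchy--Schwarz \eqref{eq: weighted C--S estimate} plus the weighted interpolation estimate \eqref{eq:17} of Lemma~\ref{lemma: weighted norm estimates}\ref{item:1}. The regularity of \(w\) is controlled by combining Lemma~\ref{lemma:wt_ellliptic} (weighted \(H^{2}\) bound driven by \(\wtL[-1]{\mu^{-2}e}\le\rho^{-1}\wtL[2]{e}\) via \eqref{eq:51}) with Lemma~\ref{lemma:wt_eigenvalues} (to control the unweighted \(\Hnorm[][1]{w}\) by \(\rho^{-1}|\log\rho|^{1/2}\wtL[2]{e}\)). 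Since \(h\rho^{-1}|\log\rho|^{1/2}\) is \(O(\gamma^{-1/2})\), the duality yields
\[
\wtL[2]{e} \leq c\,\wtH[1]{e} + ch\,\wtH[1]{\lRmap u},
\]
and plugging the previously obtained \(H^{1}\) bound, together with Lemma~\ref{lemma:wtL2_to_Li_or_wtH1_to_W1i} applied to \(u\), closes the estimate.

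The main obstacles I anticipate are: (i) the choice and interpolation of the test function \(\mu^{-1}e_{h}\), whose approximation error must be sharp enough (Lemma~\ref{lemma: weighted norm estimates}\ref{item:2}) to absorb \(\wtH[1]{e_{h}}\) on the left after threading through near-orthogonality; and (ii) tracking the weight-exponent balance in the dual problem, where the combination \(\rho^{-1}|\log\rho|^{1/2}\) from Lemma~\ref{lemma:wt_eigenvalues} is precisely what converts the \(h^{2}\) geometric factor into the final \(h^{2}|\log h|\) bound. The freedom to pick \(\gamma\) large (and \(h_{0}\) small) is used in both places to absorb the cross terms.
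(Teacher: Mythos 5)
Your overall architecture is the same as the paper's (a Nitsche weighted-norm argument for the $\wtH[1]{\cdot}$ bound followed by an Aubin--Nitsche duality argument with the dual problem $-\lb w + w = \mu^{-2}e$), and the ingredients you cite -- \eqref{eq:49}, \eqref{eq:44}, Lemma~\ref{lemma: weighted norm estimates}~\ref{item:2}, Lemma~\ref{lemma:wt_ellliptic}, Lemma~\ref{lemma:wt_eigenvalues} -- are exactly the right ones. However, there is a genuine gap in your Step~1: the claimed coercivity $\aast(e_{h},\mu^{-1}e_{h})\ge\tfrac12\wtH[1]{e_{h}}^{2}$ is false, and the justification offered does not work. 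Expanding, the cross term is $\int e_{h}\,\nbg e_{h}\cdot\nbg\mu^{-1}$, and with $\abs{\nbg\mu^{-1}}\le 2\mu^{-3/2}$ and $\mu^{-1/2}\le\rho^{-1}$ a Young inequality gives at best
\begin{equation*}
  \Bigl\lvert\int e_{h}\,\nbg e_{h}\cdot\nbg\mu^{-1}\Bigr\rvert
  \le \eps\,\wtL[1]{\nbg e_{h}}^{2} + c\,\eps^{-1}\rho^{-2}\,\wtL[1]{e_{h}}^{2},
\end{equation*}
and $\rho^{-2}=(\gamma h^{2}\abs{\log h})^{-1}\to\infty$ as $h\to0$, so no choice of $\gamma$ makes this absorbable. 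The clean way out (which the paper uses) is to integrate the cross term by parts and invoke $\abs{\lb\mu^{-1}}\le c\mu^{-2}$, which converts it into $c\wtL[2]{e_{h}}^{2}$ -- a term with the \emph{stronger} weight $\mu^{-2}$, i.e.\ precisely the other quantity the lemma is trying to bound, which cannot be hidden inside $\wtH[1]{e_{h}}^{2}$. The same unabsorbable $\wtL[2]{\cdot}$ term reappears in your "second piece", since the right-hand side of \eqref{eq:18} itself contains $\wtL[2]{\varphi_{h}}$. Consequently your Step~1 cannot deliver $\wtH[1]{e_{h}}^{2}\le ch^{2}\abs{\log h}\Wnorm[\surface(t)][2,\infty]{u}^{2}$ outright; it can only deliver this bound \emph{plus} a remainder $\hat c\,\wtL[2]{e}^{2}$.

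This changes the logical structure at the end. You use the duality argument one-directionally (deriving the $L^{2}$ bound from an already-established $H^{1}$ bound), but the two estimates are mutually dependent: Step~1 gives $\wtH[1]{e}^{2}\le(\text{good})+\hat c\,\wtL[2]{e}^{2}$ and Step~2 gives $\wtL[2]{e}^{2}\le(\text{good})+\delta\,\wtH[1]{e}^{2}$, and one must close the loop by choosing $\gamma$ large enough that $\hat c\,\delta<1$ and absorbing. You do correctly identify that $h\rho^{-1}\abs{\log\rho}^{1/2}=O(\gamma^{-1/2})$ supplies the needed smallness $\delta$, so the repair is entirely within reach of your own toolkit -- but as written the argument asserts an absorption that is impossible and omits the final coupled absorption step, which is the actual crux of the proof.
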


\begin{proof}
  \textbf{Step~1:} Our goal is to show
  \begin{equation}
    \label{eq:46}
    \wtH[1]{u- \lRmap u}^{2}
    \leq  c h^{2}\abs{\log h} \Wnorm[\surface(t)][2,\infty]{u}^{2}
      + \hat c \wtL[2]{u - \lrmap u}^{2} .
  \end{equation}
%  Using \eqref{eq:13} the bound on $\mu$, and by partial integration, similarly as in Nitsche \cite[Theorem~1]{Nitsche}, we deduce
  Similarly as in Nitsche \cite[Theorem~1]{Nitsche}, \eqref{eq:13} and partial
  integration yields
  \begin{align*}
    \wtH[1]{u- \lRmap u}^{2}
    & \leq \aast\bigr( u - \lrmap u, \mu^{-1} (u- \lrmap u)\bigr)
      + c \int_{\surface(t)} (\lb \mu^{-2}) (u - \lrmap u)^{2} \\
    & \leq \aast\bigr( u - \lrmap u, \mu^{-1} (u- \lrmap u)\bigr)
      + c \wtL[2]{u - \lrmap u}^{2}.
  \end{align*}
  For %the convenience of the reader we define
  simplicity we set $e= u - \lrmap u$, and use $\ipoll u=(\ipol u)^{l}$ to obtain
  \begin{align*}
    \aast(e, \mu^{-1} e)
    & = \aast\bigl(e, \mu^{-1}( u -  \ipoll u)\bigr) \\
    & \quad + \aast\Bigl(e, \mu^{-1}(\ipoll u - \lrmap u)
      - \ipoll\bigl( \mu^{-1}(\ipoll u - \lrmap u)\bigr)\Bigr) \\
    & \quad + \aast\Bigl(e, \ipoll\bigl(\mu^{-1}(\ipoll u - \lrmap)\bigr)\Bigr)
%    \\ &
        = I_{1} + I_{2} + I_{3}.
  \end{align*}
  Using Lemma~\ref{lemma:properties_of_wt_norm}~\eqref{eq: weighted C--S
    estimate}, Lemma~\ref{lemma:calculations_with_weight}~\eqref{eq:44},
  Lemma~\ref{lemma:wt_ipol_error_estimate}~\eqref{eq:57} and $\eps$-Young
  inequality we estimate as
  \begin{align*}
    \abs{I_{1}}
    % ------------------------------------------------------------
    % Details
    % & \leq \wtH[1]{e} \wtH[-1]{\mu^{-1}(u - \ipoll u)} \\
    % & \leq c \wtH[1]{e} ( \wtH[1]{u - \ipoll u} + \wtL[2]{u - \ipoll u})  \\
    % & \leq c \wtH[1]{e} ( h \wtHk[2][1]{u} + h^{2} \wtHk[2][2]{u}) \\
    % & \leq c \wtH[1]{e} ( h \abs{\log \gamma}^{1/2} + h \abs{\log h}^{1/2}
    %   + h^{2} \rho^{-1}) \Wnorm[\surface(t)][2,\infty]{u} \\
    % ------------------------------------------------------------
    & \leq \eps \wtH[1]{e}^{2} + c h^{2} \abs{\log h}
      \Wnorm[\surface(t)][2,\infty]{u}^{2} \phantom{\bigl( + \wtL[2]{e}\bigr)}.
  \end{align*}
  For the second term use in addition Lemma~\ref{lemma: weighted norm
    estimates}~\eqref{eq:18} and a $0< h < h_{0}$ sufficiently small to get
  \begin{align*}
    \abs{I_{2}}
    % ------------------------------------------------------------
    % Details
    % & \leq \wtH[1]{e}
    %   \wtH[-1]{\mu^{-1}( \ipoll u - \Rmapl u ) - \ipoll \bigl(
    %   \mu^{-1}(\ipoll u - \Rmapl u)\bigr)} \\
    % & \leq c\wtH[1]{e} \left(\frac{h}{\rho} + h\right)
    %   ( \wtL[2]{\ipoll u - \Rmapl u } + \wtH[1]{\ipoll u - \Rmapl u}) \\
    % & \leq c \wtH[1]{e} \left(\frac{h}{\rho} + h\right)
    %   ( \wtL[2]{u - \ipoll u} + \wtH[1]{u - \ipoll u}
    %   + \wtL[2]{e} + \wtH[1]{e}) \\
    % & \overset{h<h_{0}}{\leq} \frac{1}{4}\wtH[1]{e}
    %   ( c h\abs{\log h}^{1/2} \Wnorm[\surface(t)][2,\infty]{u}
    %   + \wtL[2]{e} + \wtH[1]{e}) \\
    % & \leq \frac{1}{8} \wtH[1]{e}^{2} + \frac{1}{8}  \wtH[1]{e}^{2}
    %   + c \bigl(\wtL[2]{e}^{2}
    %   + h^{2} \abs{\log h} \Wnorm[\surface(t)][2,\infty]{u}^{2}
    %   \bigr) \\
    % ------------------------------------------------------------
    & \leq \eps \wtH[1]{e}^{2} + c \bigl( h^{2} \abs{\log h} \Wnorm[\surface(t)][2,\infty]{u}
      + \wtL[2]{e}\bigr).
  \end{align*}
  For the last term use in addition
  Lemma~\ref{lemma:properties_of_wt_norm}~\eqref{eq:49} to reach at
  \begin{align*}
    \abs{I_{3}}
    % ------------------------------------------------------------
    % Details
    % & \leq ch^{2} \wtH[1]{\Rmapl u} \wtH[-1]{\ipoll \bigl(
    %   \mu^{-1} (\ipoll u - \Rmapl u)\bigr)} \\
    % & \leq ch^{2} (\wtH[1]{u} + \wtH[1]{e}) \\
    % & \quad \left( \wtH[-1]{\mu^{-1}( \ipoll u - \Rmapl u)}
    %   + \wtH[-1]{ \mu^{-1}( \ipoll u - \Rmapl u) -
    %   \ipoll\bigl( \mu^{-1}( \ipoll u - \Rmapl u) \bigr)}\right) \\
    % & \leq ch^{2} ( \wtH[1]{u} + \wtH[1]{e})
    %   (\wtL[2]{\ipoll u - \Rmapl u} + \wtH[1]{\ipoll u - \Rmapl u}) \\
    % & \leq ch^{2} ( \wtH[1]{u} + \wtH[1]{e}) \\
    % & \quad ( \wtL[2]{u - \ipoll u} + \wtH[1]{u - \ipoll u}
    %   + \wtL[2]{e} + \wtH[1]{e}) \\
    % & \leq ch^{2} ( \wtH[1]{u} + \wtH[1]{e})
    %   ( c h\abs{\log h}^{1/2} \Wnorm[\surface(t)][2,\infty]{u}
    %   + \wtL[2]{e} + \wtH[1]{e}) \\
    % & \leq c \wtH[1]{u}^{2} +
    %   c h^{6} \abs{\log h} \Wnorm[\surface(t)][2,\infty]{u}
    %   + c h^{4}\wtL[2]{e}^{2} + ch^{4} \wtH[1]{e}^{2} \\
    % & \quad ch^{4} \wtH[1]{e}^{2}
    %   + c h^{2} \abs{\log h} \Wnorm[\surface(t)][2,\infty]{u}
    %   + c \wtL[2]{e} + ch^{2} \wtH[1]{e}^{2} \\
    % & \overset{h < h_{1}}{\leq}
    %   c \wtH[1]{u}^{2} +
    %   c h^{6} \abs{\log h} \Wnorm[\surface(t)][2,\infty]{u}
    %   + c h^{4}\wtL[2]{e}^{2} + \frac{1}{3\cdot 8} \wtH[1]{e}^{2} \\
    % & \quad \frac{1}{3\cdot 8} \wtH[1]{e}^{2}
    %   + c h^{2} \abs{\log h} \Wnorm[\surface(t)][2,\infty]{u}
    %   + c \wtL[2]{e} + \frac{1}{3\cdot 8} \wtH[1]{e}^{2} \\
    % ------------------------------------------------------------
    & \leq \eps \wtH[1]{e}^{2} +
      c\bigl( h^{2} \abs{\log h} \Wnorm[\surface(t)][2,\infty]{u}
      + \wtL[2]{e}\bigr)
  \end{align*}
  These estimates together, and absorbing $\wtH[1]{e}^{2}$, imply \eqref{eq:46}.  \par
  \textbf{Step~2:} Using an Aubin--Nitsche argument we prove that there exists $\gamma >
  \gamma_{0}>0$ sufficiently large such
  that for all $\delta>0$ the following estimate holds
  \begin{equation}
    \label{eq:52}
    \wtL[2]{u - \Rmapl u}^{2}
    \leq c h^{4} \Wnorm[\surface(t)][2,\infty]{u}^{2} +
    \delta \wtH[1]{u - \Rmapl u}^{2}.
  \end{equation}
  %\eqref{eq:53} follows by using \eqref{eq:46} and \eqref{eq:52}.
  Let $w\in H^{2}(\surface(t))$ be the weak solution of
  \begin{equation*}
    -\lb w + w = \mu^{-2}e.
  \end{equation*}
  Then by testing with $e$ we obtain
  \begin{align*}
    \wtL[2]{e}^{2}
    & = \bigl(\aast(e,w) - \aast(e,\ipoll w) \bigr) + \aast(e,\ipoll w)
%    \\  &   = J_{1}  + J_{2}.
    = \aast( e, w - \ipoll w) + \aast(e,\ipoll w)
  \end{align*}
  In addition to the already mentioned lemmata in Step~1 use
  Lemma~\ref{lemma:wt_ellliptic}~\eqref{eq:24},
  Lemma~\ref{lemma:calculations_with_weight}~\eqref{eq:51},
  Lemma~\ref{lemma:wt_eigenvalues}~\eqref{eq:45} and a sufficiently large
  $\gamma> \gamma_{0}>0$ to estimate
  \begin{align*}
%    \abs{J_{1}}
    |\aast( e, w - \ipoll w)|
    % ------------------------------------------------------------
    % Details
% & = \aast( e, w - \ipoll w) \\
% & \leq \wtH[1]{e} \wtH[-1]{w - \ipoll w} \\
% & \leq c \wtH[1]{e}  h \wtHk[2][-1]{w} \\
% & \leq c \wtH[1]{e} h \bigl( \wtL[-1]{\mu^{-2}e}
%   + \Hnorm[\surface(t)]{w}\bigr) \\
% & \leq c \wtH[1]{e} h \bigl( \rho^{-1} \wtL[2]{e}
%   + \rho^{-1} \abs{\log \rho}^{1/2} \wtL[2]{e}\bigr) \\
% & \leq c h \rho^{-1}\abs{\log \rho}^{1/2} \wtH[1]{e} \wtL[2]{e} \\
% & \overset{\gamma > \gamma_{0}}{\leq} \sqrt{\frac{\delta}{2}}
%   \wtH[1]{e} \wtL[2]{e} \\
% & \leq \frac{1}{4} \wtL[2]{e}^{2}
%   + \frac{\delta}{2} \wtH[1]{e}^{2} \\
    % ------------------------------------------------------------
    & \leq \frac{1}{4}\wtL[2]{e}^{2} + \frac{\delta}{2} \wtH[1]{e}^{2}.
  \end{align*}
  For the other term we estimate
  \begin{align*}
%    \abs{J_{2}}
    |\aast(e,\ipoll w)|
    & \leq c h^{2} \Hnorm{e} \Hnorm{\ipoll w}
    % ------------------------------------------------------------
    % Details
    % \\
    % & \leq c h^{3} \Hnorm[][2]{u} ( \Hnorm{w - \ipoll w} + \Hnorm{w}) \\
    % & \leq c h^{3} \Hnorm[][2]{u}
    %   ( h \Hnorm[][2]{w} + \rho^{-1}\abs{\log \rho}^{1/2}
    %   \wtL[2]{e}) \\
    % & \leq c h^{2} \Hnorm[][2]{u} ( h^{2} \Lnorm{\mu^{-2}e} +
    %   h\rho^{-1}\abs{\log \rho}^{1/2} \wtL[2]{e}) \\
    % & \leq c h^{2} ( h^{2} \rho^{-2} \wtL[2]{e}
    %   + h\rho^{-1}\abs{\log \rho}^{1/2}
    %   \wtL[2]{e}) \\
    % & \leq c h^{2} \Hnorm[][2]{u} \wtL[2]{e} \\
    % & \leq c h^{4} \Hnorm[][2]{u}^{2}  + \frac{1}{4} \wtL[2]{e}^{2} \\
    % &
    % ------------------------------------------------------------
      \leq c h^{4} \Wnorm[\surface(t)][2,\infty]{u}^{2}
      + \frac{1}{4} \wtL[2]{e}^{2}.
  \end{align*}
  By absorption, this implies \eqref{eq:52}.

  The final estimate is shown by combining \eqref{eq:46} and \eqref{eq:52}, and choosing $\delta>0$ such that $\hat c \delta <1$. Then an absorbtion finishes the proof.
\end{proof}

\begin{theorem}
  \label{theorem: L infty errors in Ritz}
  There exist constants $c>0$ independent of $h$ and $t$ such that
  \begin{equation*}
  \begin{aligned}
    \Lnorm[\surface(t)][\infty]{u - \bigl(\Rmap(t) u\bigr)^{l}}
    \leq&\ c h^{2}\abs{\log h}^{3/2} \Wnorm[\surface(t)][2,\infty]{u}, \\
    \Wnorm[\surface(t)][1,\infty]{u - \bigl(\Rmap(t) u\bigr)^{l}}
    \leq&\ c h\abs{\log h} \Wnorm[\surface(t)][2,\infty]{u},
  \end{aligned}
  \quad (u \in W^{2,\infty}(\Gat)).
  \end{equation*}
\end{theorem}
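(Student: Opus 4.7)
The plan is to split the error by inserting the Lagrange interpolant $\ipoll u$, writing
\[
u - \lRmap(t) u = \bigl( u - \ipoll u \bigr) + \bigl( \ipoll u - \lRmap(t) u \bigr).
\]
The first term is controlled directly by the standard pointwise interpolation estimate (Lemma~\ref{lemma: interpolation error} with $p=\infty$), which yields bounds of order $h^2$ in $L^\infty$ and $h$ in $W^{1,\infty}$, \emph{without} any logarithmic factor. The second term is a discrete finite element function, so the strategy is to push it into the weighted norms where we already have control, exploiting the weighted-to-maximum-norm passage of Lemma~\ref{lemma:w1i_to_wt_H1}.

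Concretely, for the discrete remainder $\ipoll u - \lRmap u \in S_h^l(t)$ I would apply
\begin{align*}
\Lnorm[\surface(t)][\infty]{\ipoll u - \lRmap u}
&\leq c h \abs{\log h}\, \wtL[2]{\ipoll u - \lRmap u}, \\
\Wnorm[\surface(t)][1,\infty]{\ipoll u - \lRmap u}
&\leq c\gamma \abs{\log h}^{1/2}\, \wtH[1]{\ipoll u - \lRmap u},
\end{align*}
and then estimate the weighted norms on the right by triangle inequality through $u$:
\[
\wtH[1]{\ipoll u - \lRmap u} \leq \wtH[1]{u - \ipoll u} + \wtH[1]{u - \lRmap u},
\]
and similarly for $\wtL[2]{\cdot}$. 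The weighted interpolation estimate (Lemma~\ref{lemma:wt_ipol_error_estimate}) and the weighted Ritz error estimate (Lemma~\ref{lemma:wt_ritz_error_estimate}) both contribute $c h \abs{\log h}^{1/2} \Wnorm[\surface(t)][2,\infty]{u}$ to each of the weighted norms.

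Putting the pieces together, the discrete remainder is bounded by
\[
c h \abs{\log h} \cdot c h \abs{\log h}^{1/2} \Wnorm[\surface(t)][2,\infty]{u}
= c h^{2} \abs{\log h}^{3/2} \Wnorm[\surface(t)][2,\infty]{u}
\]
in $L^\infty$ and by $c h \abs{\log h} \Wnorm[\surface(t)][2,\infty]{u}$ in $W^{1,\infty}$. Combining with the interpolation term $\Lnorm[\surface(t)][\infty]{u - \ipoll u}\leq c h^{2} \Wnorm[\surface(t)][2,\infty]{u}$ and $\Wnorm[\surface(t)][1,\infty]{u - \ipoll u}\leq c h \Wnorm[\surface(t)][2,\infty]{u}$ gives the claimed bounds, since the interpolation contributions are absorbed into the larger logarithmic terms.

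The only subtlety is bookkeeping the logarithmic powers: one factor $\abs{\log h}^{1/2}$ comes from the weighted Ritz/interpolation estimate, and either $\abs{\log h}$ (for $L^\infty$) or $\abs{\log h}^{1/2}$ (for $W^{1,\infty}$) comes from Lemma~\ref{lemma:w1i_to_wt_H1}, so the final exponents $3/2$ and $1$ match exactly. No new obstruction arises because all the genuinely hard work (the weighted Aubin--Nitsche duality and the absorption argument with $\gamma$ large enough) has been carried out already in Lemma~\ref{lemma:wt_ritz_error_estimate}; here one only needs to be careful that the constant $\gamma$ (from the weight) is first fixed so that Lemma~\ref{lemma:wt_ritz_error_estimate} applies, and only then does the constant in Lemma~\ref{lemma:w1i_to_wt_H1} inherit a dependence on $\gamma$, which is harmless since $\gamma$ is a fixed number independent of $h$ and $t$.
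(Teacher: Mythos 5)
Your proposal is correct and follows essentially the same route as the paper's own proof: split the error through the interpolant, bound the continuous part by Lemma~\ref{lemma: interpolation error}, pass the discrete remainder to weighted norms via Lemma~\ref{lemma:w1i_to_wt_H1}, and control those by Lemma~\ref{lemma:wt_ipol_error_estimate} and Lemma~\ref{lemma:wt_ritz_error_estimate}. The logarithmic bookkeeping and the remark on fixing $\gamma$ first are also consistent with the paper.
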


\begin{proof}
  Using Lemma~\ref{lemma: interpolation error},
  Lemma~\ref{lemma:w1i_to_wt_H1}~\eqref{eq:43} and Lemma~\ref{lemma:wtL2_to_Li_or_wtH1_to_W1i}~\eqref{eq:36} we get
  \begin{align*}
    \Wnorm[\surface(t)][1,\infty]{u - \Rmapl u}
    & \leq \Wnorm[\surface(t)][1,\infty]{u - \ipoll u}
      + c\Wnorm[\surface_{h}(t)][1,\infty]{\ipol u - \Rmap u} \\
    & \leq c h \Wnorm[\surface(t)][2,\infty]{u}
      + c \abs{\log h}^{1/2} \wtH[1]{\ipol u - \Rmap u} \\
    & \leq c h \abs{\log h} \Wnorm[\surface_{h}(t)][2,\infty]{u}
      + c \wtH[1]{u - (\Rmap u)^{l}}.
  \end{align*}
  For the $W^{1,\infty}$-estimate use Lemma~\ref{lemma:wt_ritz_error_estimate} to estimate the weighted norms. The $L^{\infty}$-estimate is obtained in a similar way.
\end{proof}

%\rev{Theorem 4.1 is basically already contained in [Demlow (2009)], Corollary 4.6. The main difference is that the constants in [Demlow (2009)] are not shown to be uniform in $t$. The proofs in [Demlow (2009)] rely on Aubin's surface Green's function estimates. Because the authors already show in Appendix A.\ (Theorem~A.1) that certain of these Green's function estimates have $t$-independent constants, it is likely that it could easily be shown that the bounds in [Demlow (2009)] do also.}{
\begin{remark}
The paper of Demlow \cite{Demlow2009} (dealing with elliptic problems on stationary surfaces) contains a related result in Corollary~4.6, however it does not directly imply Lemma~\ref{lemma:wt_ritz_error_estimate}. There are two crucial differences compered to the theorem above. Since there is no surface evolution in \cite{Demlow2009} the constants appearing in his proof would need to be shown being uniform in time\footnote{In fact some of them is later shown to be $t$-independent in the appendix.}. Furthermore, Demlow uses a different Ritz map (denoted by $\tilde u_{hk}^\ell$ there): instead of using the positive definite bilinear form $a^*(\cdot,\cdot)$ in \eqref{eq:39}, he uses the original positive \emph{semi-definite} bilinear form $a(\cdot,\cdot)$ and works with functions with mean value zero.
\end{remark}
%}

% \subsection{Discrete material derivative error estimates for a Ritz map}
\subsection{Maximum norm material derivative error estimates}
%\label{sec:discr-mater-deriv}

Since the material derivative does not commute with the \emph{time dependent} Ritz map, i.e.\ $\mat_h \lRmap(t) u \neq \lRmap(t) \mat_{h} u$, we bound the error $\mat_{h}(u - \lRmap(t) u)$. Again we first show our estimates in the weighted norms, and then use these results for the $L^\infty$- and $W^{1,\infty}$-norm error estimates. Up to the authors knowledge such a maximum norm error estimate for the material derivative of the Ritz map have not been shown in the literature before.

For this subsection we write $\Rmap u$ instead of $\Rmap(t)u$ and further $\lRmap u$ instead of $\lRmap(t) u$.

We first state a substitute for our weighted pseudo Galerkin inequality \eqref{eq:49}.
\begin{lemma}
  \label{lemma:wt_md_pseudo_galerkin_eq}
  There exists a constant $c>0$ independent of $h$ and $t$ such that for all
  $u\in W^{2,\infty}(\GT)$ and $\varphi_{h}\in S_{h}^{l}(t)$ it holds
  \begin{equation}
    \label{eq:59}
    \begin{split}
      \abs{\aast( \mat_{h} (u & - \Rmapl u), \varphi_{h} )} \leq c
      \Big(h^{2} \wtH[1]{\mat_h (u - \Rmapl u)} \\
      & \quad + h \abs{\log h}^{1/2} \big(\Wnorm[\surface(t)][2,\infty]{u} +
      \Wnorm[\surface(t)][1,\infty]{\mat u}\big)\Big) \wtH[-1]{\varphi_{h}}.
    \end{split}
  \end{equation}
\end{lemma}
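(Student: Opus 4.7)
The strategy is to differentiate the defining identity of the Ritz map in time, use the transport formulas of Lemma~\ref{lemma: transport properties}, and then bound the resulting terms through geometric error estimates in weighted norms (Lemma~\ref{lemma:properties_of_wt_norm}) together with the weighted Ritz estimate already proved in Lemma~\ref{lemma:wt_ritz_error_estimate}. Fix $t \in [0,T]$ and $\varphi_h = \phi_h^l \in S_h^l(t)$. Extend $\phi_h$ to a neighborhood of $t$ by keeping its nodal coefficients constant; by the transport property~\eqref{eq: transport property}, this extension satisfies $\mat_h \phi_h \equiv 0$, and since $v_h$ is the lift of $V_h$, also $\mat_h \varphi_h \equiv 0$. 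Applying $\frac{d}{dt}$ to $\aast_h(\Rmap u,\phi_h) = \aast(u,\varphi_h)$ and using \eqref{eq: continuous transport properties}--\eqref{eq: discrete transport properties} the two material derivative terms drop out and yield
\begin{equation*}
  \aast(\mat_h u,\varphi_h) - \aast_h(\mat_h \Rmap u,\phi_h)
  = (b_h+g_h)(V_h;\Rmap u,\phi_h) - (b+g)(v_h;u,\varphi_h).
\end{equation*}

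\textbf{Decomposition.} Using that lift commutes with the discrete material derivative, $(\mat_h\Rmap u)^l = \mat_h \Rmapl u$, I write
\begin{equation*}
  \aast(\mat_h(u-\Rmapl u),\varphi_h)
  = \bigl[\aast(\mat_h u,\varphi_h) - \aast_h(\mat_h\Rmap u,\phi_h)\bigr]
  - \bigl[\aast((\mat_h\Rmap u)^l,\varphi_h) - \aast_h(\mat_h\Rmap u,\phi_h)\bigr].
\end{equation*}
By the differentiated identity, the first bracket equals $(b_h+g_h)(V_h;\Rmap u,\phi_h) - (b+g)(v_h;u,\varphi_h)$. Adding and subtracting $(b+g)(v_h;\Rmapl u,\varphi_h)$ splits it into a geometric-error part controlled by \eqref{eq:62} and the linear $(b+g)$ form evaluated on $\Rmapl u - u$, controlled by the weighted Cauchy--Schwarz \eqref{eq: weighted C--S estimate}. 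The second bracket is directly a geometric error in $\aast$ to which \eqref{eq:61} applies. Hence
\begin{equation*}
  \bigabs{\aast(\mat_h(u-\Rmapl u),\varphi_h)}
  \leq c\bigl( h^2 \wtH{\Rmapl u} + \wtH{u-\Rmapl u} + h^2 \wtH{(\mat_h\Rmap u)^l}\bigr)\wtH[-1]{\varphi_h}.
\end{equation*}

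\textbf{Bounding the weighted norms.} For $\wtH{u-\Rmapl u}$, invoke Lemma~\ref{lemma:wt_ritz_error_estimate} to obtain $ch|\log h|^{1/2}\Wnorm[\surface(t)][2,\infty]{u}$, which already fits the target bound. For $\wtH{\Rmapl u}$, use the triangle inequality against $u$ and Lemma~\ref{lemma:wtL2_to_Li_or_wtH1_to_W1i}~\eqref{eq:36} to bound $\wtH{u} \leq c|\log h|^{1/2}\Wnorm[\surface(t)][1,\infty]{u}$; multiplied by $h^2$ this is absorbed. For the critical material-derivative term, split
\begin{equation*}
  \mat_h \Rmapl u = \mat_h u - \mat_h(u-\Rmapl u),
\end{equation*}
so that $h^2 \wtH{(\mat_h\Rmap u)^l} \leq h^2 \wtH{\mat_h(u-\Rmapl u)} + h^2 \wtH{\mat_h u}$. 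The first summand is precisely the absorbable term appearing in the statement. For $\wtH{\mat_h u}$, use \eqref{eq:40} to swap $\mat_h u$ for $\mat u + (v-v_h)\cdot\nbg u$, apply Lemma~\ref{lemma:wtL2_to_Li_or_wtH1_to_W1i}~\eqref{eq:36} to $\mat u$, and use the velocity bound \eqref{eq: velocity L infty bound} for the remainder. Collecting all estimates gives the desired factor $h|\log h|^{1/2}(\Wnorm[\surface(t)][2,\infty]{u} + \Wnorm[\surface(t)][1,\infty]{\mat u})$.

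\textbf{Main obstacle.} The delicate points are the bookkeeping of the factors of $h$ versus $|\log h|^{1/2}$ and making sure the only non-absorbable piece of $\wtH{\mat_h \Rmapl u}$ is the one involving $\mat u$ (so that $\mat \Rmapl u$ itself never appears on the right-hand side); this forces the use of the identity $\mat_h\Rmapl u = \mat_h u - \mat_h(u-\Rmapl u)$ rather than a direct estimate. A minor technicality is justifying the commutation of lift and material derivative for $\Rmap u$ at fixed $t$, and the fact that the time extension of $\phi_h$ used in the differentiation step is well-defined and produces $\mat_h\varphi_h=0$.
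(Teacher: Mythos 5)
Your proposal is correct and follows essentially the same route as the paper: differentiating the Ritz map identity with transport properties and vanishing discrete material derivatives of the test functions (the paper's identity \eqref{eq:65}), splitting into geometric errors handled by Lemma~\ref{lemma:properties_of_wt_norm} plus a $(g+b)$-term on the Ritz error handled by weighted Cauchy--Schwarz and Lemma~\ref{lemma:wt_ritz_error_estimate}, and controlling $\wtH[1]{\mat_h\Rmapl u}$ via the triangle inequality together with \eqref{eq:40} and \eqref{eq:36} (which is exactly the paper's inequality \eqref{eq:63}). Your rearrangement of which terms are compared discretely versus continuously is immaterial, and your write-up supplies the details the paper only sketches.
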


\begin{proof}
  The main idea is given by Dziuk and Elliott in \cite{DziukElliott_L2}.  Using
  \eqref{eq:40} and Lemma~\ref{lemma:wtL2_to_Li_or_wtH1_to_W1i}~\eqref{eq:36}
  it is easy to verify
  \begin{align}
    \wtH[1]{\mat_{h}\Rmapl u} \leq &\ \wtH[1]{\mat_{h} u - \mat_{h}\Rmapl u} \nonumber \\
    &\ + c\abs{\log h}^{1/2} \big( \Wnorm[\surface(t)][1,\infty]{\mat u}
    + h \Wnorm[\surface(t)][2,\infty]{u} \big). \label{eq:63}
  \end{align}
  Let $\phi_{h}\in S_{h}(t)$ such that $\varphi_{h}=\phi_{h}^{l}$.  Taking time
  derivative of the definition of the Ritz map \eqref{eq:39}, using the
  discrete transport properties \eqref{eq: continuous transport properties} Lemma~\ref{lemma: transport properties}, and the definition of the Ritz map, we obtain
  \begin{equation}
    \label{eq:65}
    \begin{aligned}
      \aast(\mat_{h} u - \mat_{h} \Rmapl u, \varphi_{h}) =&\
      \aast_{h}(\mat_{h}\Rmap u, \phi_{h}) - \aast(\mat_{h}\Rmapl u, \varphi_{h})  \\
      &\ + (g_{h}+b_{h})(V_{h}; u^{-l},\phi_{h}) - (g+b)(v_{h}; u, \varphi_{h}) \\
      &\ -  (g_{h}+b_{h})(V_{h}; u^{-l} - \Rmap u, \phi_{h}) .
    \end{aligned}
  \end{equation}
  Then estimate using Lemma~\ref{lemma:properties_of_wt_norm}~\eqref{eq:61},
  \eqref{eq:62},
  Lemma~\ref{lemma:wt_ritz_error_estimate}~\eqref{eq:53} and the above
  inequality to finish the proof (cf.\ \cite[Theorem 7.2]{diss_Mansour}).
\end{proof}

\begin{lemma}
  \label{lemma:md_ipol_err}
  For $k\in \{0,1\}$ there exists $c=c(k)>0$ independent of $t$ and $h$ such
  that for $u\in W^{3,\infty}(\GT)$ the following inequalities hold
    \begin{align}
    \label{eq:35}
        \Wnorm[\surface(t)][k,\infty]{\mat_{h} u - \ipoll \mat u}
        \leq &\ c h^{2-k} \big(\Wnorm[\surface(t)][2,\infty]{u}
        + \Wnorm[\surface(t)][2,\infty]{\mat u}\big), \\
        \wtL[2]{\mat_{h} u - \ipoll \mat u}^{2} +&\ \wtH[1]{\mat_{h}u - \ipoll \mat u}^{2} \nonumber \\
        \leq &\ c h^{2}\abs{\log h} \big(\Wnorm[\surface(t)][2,\infty]{u} + \Wnorm[\surface(t)][2,\infty]{\mat u}\big).\label{eq:58}
    \end{align}
\end{lemma}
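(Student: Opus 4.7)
The plan is to reduce everything to the continuous material derivative plus a velocity-difference correction, then apply the interpolation and velocity estimates already stated in the paper. The starting point is the identity~\eqref{eq:40}, which gives the pointwise splitting
\begin{equation*}
  \mat_{h} u - \ipoll \mat u
  = \bigl(\mat u - \ipoll \mat u\bigr) - (v - v_{h}) \cdot \nbg u.
\end{equation*}
Since $u\in W^{3,\infty}(\GT)$ we have $\mat u \in W^{2,\infty}\bigl(\surface(t)\bigr)$, so each term can be handled separately by already established tools.

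For \eqref{eq:35} with $k\in\{0,1\}$, the first term is bounded by Lemma~\ref{lemma: interpolation error} applied to $\mat u$, giving a bound of the form $c h^{2-k} \Wnorm[\surface(t)][2,\infty]{\mat u}$. For the second term I would use \eqref{eq: velocity L infty bound}: for $k=0$ directly as $\Linfty[\surface(t)]{v-v_h}\Linfty[\surface(t)]{\nbg u}\le c h^2 \Wnorm[\surface(t)][1,\infty]{u}$, and for $k=1$ via the product rule
\begin{equation*}
  \nbg\bigl((v-v_h)\cdot \nbg u\bigr)
  = \nbg(v-v_h)\cdot \nbg u + (v-v_h)\cdot \nbg^{2} u,
\end{equation*}
combined with the $W^{1,\infty}$-bound on $v-v_h$ from \eqref{eq: velocity L infty bound}. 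This produces a term of size $c h \Wnorm[\surface(t)][2,\infty]{u}$. Adding the two contributions yields \eqref{eq:35}.

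For the weighted estimate \eqref{eq:58}, I would apply Lemma~\ref{lemma:wt_ipol_error_estimate} directly to $\mat u$ to bound the first term by $c h^{2}\abs{\log h}\Wnorm[\surface(t)][2,\infty]{\mat u}^{2}$. For the velocity correction $(v-v_{h})\cdot \nbg u$, which is not a finite element function and requires no interpolation, I would pass through the $W^{1,\infty}$-norm using Lemma~\ref{lemma:wtL2_to_Li_or_wtH1_to_W1i}~\eqref{eq:33},~\eqref{eq:36}:
\begin{equation*}
  \wtL[2]{(v-v_h)\cdot \nbg u} + \wtH[1]{(v-v_h)\cdot \nbg u}
  \le c \abs{\log \rho}^{1/2} \Wnorm[\surface(t)][1,\infty]{(v-v_h)\cdot \nbg u},
\end{equation*}
and then invoke the same $W^{1,\infty}$ bound $c h \Wnorm[\surface(t)][2,\infty]{u}$ from the previous step. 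Since $\rho^{2}=\gamma h^{2}\abs{\log h}$ implies $\abs{\log \rho}\le c\abs{\log h}$, this contributes $c h^{2} \abs{\log h} \Wnorm[\surface(t)][2,\infty]{u}^{2}$, and summing the two pieces closes \eqref{eq:58}.

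I do not expect a genuine obstacle here; the only mildly delicate point is handling the velocity correction in the weighted norm, where one must avoid any factor of $\rho^{-1}$ (which would blow up). This is cleanly avoided because $(v-v_{h})\cdot \nbg u$ is $O(h^{2})$ pointwise in $L^{\infty}$ together with its $O(h)$ gradient, so routing through \eqref{eq:36} rather than \eqref{eq:33} produces only the harmless logarithmic factor.
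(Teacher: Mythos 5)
Your decomposition via \eqref{eq:40} and your treatment of \eqref{eq:35} coincide with the paper's proof: the paper likewise splits off $(v-v_h)\cdot\nbg u$, applies Lemma~\ref{lemma: interpolation error} to $\mat u$ and \eqref{eq: velocity L infty bound} (with the product rule for $k=1$) to the correction. For \eqref{eq:58} the paper is marginally more economical: it does not re-split, but simply feeds the already-proved $L^{\infty}$- and $W^{1,\infty}$-bounds \eqref{eq:35} for the \emph{whole} difference $\mat_h u-\ipoll\mat u$ into Lemma~\ref{lemma:wtL2_to_Li_or_wtH1_to_W1i}, estimates \eqref{eq:33} and \eqref{eq:36}. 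Your variant (weighted interpolation Lemma~\ref{lemma:wt_ipol_error_estimate} for the interpolation part, norm comparison for the velocity part) is equally viable.

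There is, however, one step that is wrong as written. Your displayed inequality
\begin{equation*}
  \wtL[2]{(v-v_h)\cdot \nbg u}
  \le c \abs{\log \rho}^{1/2} \Wnorm[\surface(t)][1,\infty]{(v-v_h)\cdot \nbg u}
\end{equation*}
is not available: for the weight exponent $\alpha=2$ on a two-dimensional surface one has $\|\mu^{-2}\|_{L^1}^{1/2}\sim\rho^{-1}$, not $\abs{\log\rho}^{1/2}$, so the only estimate Lemma~\ref{lemma:wtL2_to_Li_or_wtH1_to_W1i} provides for the $\wtL[2]{\cdot}$-norm is \eqref{eq:33} with the factor $\rho^{-1}$. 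Your stated motive for avoiding \eqref{eq:33} --- that $\rho^{-1}$ ``would blow up'' --- is also misplaced: since $\Linfty[\surface(t)]{(v-v_h)\cdot\nbg u}\le ch^{2}$, estimate \eqref{eq:33} gives $\rho^{-1}h^{2}=h\,(\gamma\abs{\log h})^{-1/2}$, whose square is comfortably below $h^{2}\abs{\log h}$. So the correct repair is simply to use \eqref{eq:33} for the $\wtL[2]{\cdot}$-piece and reserve \eqref{eq:36} for the $\wtH[1]{\cdot}$-piece; this is exactly what the paper's proof does. With that substitution your argument closes.
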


\begin{proof}
  Using \eqref{eq:40} we get
  \begin{multline*}
    \Wnorm[\surface(t)][k,\infty]{\mat_{h} u - \ipoll \mat u} \\
    \leq \Wnorm[\surface(t)][k,\infty]{(v-v_{h}) \cdot \nbg u}
    + \Wnorm[\surface(t)][k,\infty]{\mat u - \ipoll \mat u}.
  \end{multline*}
  Use Lemma~\ref{lemma: interpolation error} and \eqref{eq: velocity L infty
    bound} to show the first estimate.%\eqref{eq:35}.

  For the second inequality %\eqref{eq:58}
  use a H\"{o}lder estimate, and \eqref{eq:35} with
  Lemma~\ref{lemma:wtL2_to_Li_or_wtH1_to_W1i} \eqref{eq:33} and \eqref{eq:36}.
\end{proof}

\begin{lemma}
  \label{lemma:wt_md_ritz_error_estimate}
  There exists $h_{0}>0$ sufficiently small and $\gamma_{0}>0$ sufficiently
  large and a constant $c=c(h_{0},\gamma_{0})>0$ such that for $u\in
  W^{3,\infty}(\GT)$ the following holds
  \begin{equation}
    \label{eq:55}
    \begin{split}
      \wtL[2]{\mat_{h}u-\mat_{h}\lRmap u}^{2}
      &+ \wtH[1]{\mat_{h}u-\mat_{h}\lRmap u}^{2} \\
      & \leq c h^{2} \abs{\log h}^{4} (\Wnorm[\surface(t)][2,\infty]{u}^{2}
      + \Wnorm[\surface(t)][2,\infty]{\mat u}^{2}).
    \end{split}
  \end{equation}
\end{lemma}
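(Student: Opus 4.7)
The plan is to mirror the two-step structure of the proof of Lemma~\ref{lemma:wt_ritz_error_estimate}, now applied to the material-derivative error $e := \mat_{h} u - \mat_{h} \lRmap u$. The role of the weighted pseudo-Galerkin estimate \eqref{eq:49} will be played by \eqref{eq:59}, the role of the weighted interpolation estimate \eqref{eq:57} by \eqref{eq:58}, and the maximum-norm error of the Ritz map itself will be fed in through Lemma~\ref{lemma:wt_ritz_error_estimate}. The hypothesis $u\in W^{3,\infty}(\GT)$ is exactly what is needed so that $\mat u \in W^{2,\infty}\bigl(\surface(t)\bigr)$ and the interpolation of $\mat u$ behaves as in the stationary case.

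\textbf{Step 1 ($H^{1}_{\alpha}$ estimate).} As in Step~1 of Lemma~\ref{lemma:wt_ritz_error_estimate}, partial integration and Lemma~\ref{lemma:calculations_with_weight}\ref{item:4} reduce matters to
\begin{equation*}
\wtH[1]{e}^{2} \leq \aast\bigl(e, \mu^{-1} e\bigr) + c \wtL[2]{e}^{2}.
\end{equation*}
Inserting $\ipoll \mat u$ and the interpolant of $\mu^{-1}(\ipoll \mat u - \mat_{h}\Rmapl u)$ splits $\aast(e,\mu^{-1}e)$ into three pieces $I_{1}+I_{2}+I_{3}$. The terms $I_{1}$ and $I_{2}$ are controlled as in the Ritz-map proof by combining the weighted Cauchy-Schwarz inequality \eqref{eq: weighted C--S estimate} with Lemma~\ref{lemma:calculations_with_weight}\eqref{eq:44}, Lemma~\ref{lemma: weighted norm estimates}\eqref{eq:18}, and the interpolation bound \eqref{eq:58} from Lemma~\ref{lemma:md_ipol_err}. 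For $I_{3}$ we apply the material-derivative pseudo-Galerkin inequality \eqref{eq:59}; its $h^{2}\wtH[1]{e}$ contribution is absorbed for $h<h_{0}$, and $\eps$-Young inequalities close this step with a bound of the form
\begin{equation*}
\wtH[1]{e}^{2} \leq c h^{2}\abs{\log h}^{3} \bigl(\Wnorm[\surface(t)][2,\infty]{u}^{2}+\Wnorm[\surface(t)][2,\infty]{\mat u}^{2}\bigr) + \hat c\,\wtL[2]{e}^{2}.
\end{equation*}

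\textbf{Step 2 ($L^{2}_{\alpha}$ estimate via Aubin--Nitsche).} Let $w\in H^{2}\bigl(\surface(t)\bigr)$ be the weak solution of $-\lb w + w = \mu^{-2} e$. Testing with $e$ and adding and subtracting $\ipoll w$ gives
\begin{equation*}
\wtL[2]{e}^{2} = \aast\bigl(e, w - \ipoll w\bigr) + \aast\bigl(e, \ipoll w\bigr).
\end{equation*}
The first term is handled verbatim as in Step~2 of Lemma~\ref{lemma:wt_ritz_error_estimate}, via Lemma~\ref{lemma:wt_ellliptic}\eqref{eq:24}, Lemma~\ref{lemma:calculations_with_weight}\eqref{eq:51} and Lemma~\ref{lemma:wt_eigenvalues}\eqref{eq:45}, choosing $\gamma$ large enough. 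The second term replaces the exact Galerkin orthogonality of the Ritz setting by \eqref{eq:59}, which supplies the crucial $h^{2}$ factor (plus a small $h\abs{\log h}^{1/2}$ contribution) paired against $\wtH[-1]{\ipoll w}$ or equivalently $\Hnorm{\ipoll w}$, itself controlled by Lemma~\ref{lemma:wt_eigenvalues}. This yields
\begin{equation*}
\wtL[2]{e}^{2} \leq c h^{4}\abs{\log h}^{k} \bigl(\Wnorm[\surface(t)][2,\infty]{u}^{2}+\Wnorm[\surface(t)][2,\infty]{\mat u}^{2}\bigr) + \delta\,\wtH[1]{e}^{2}
\end{equation*}
for some integer $k$ and any $\delta>0$. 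Combining with Step~1 and choosing $\delta$ so that $\hat c\,\delta<1$ gives \eqref{eq:55} after a final absorption.

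The main obstacle is the bookkeeping of logarithmic factors. The pseudo-Galerkin inequality \eqref{eq:59} itself costs $\abs{\log h}^{1/2}$ and is invoked in both steps; Lemma~\ref{lemma:wt_eigenvalues} costs another $\abs{\log \rho}$ when bounding $\Hnorm{\ipoll w}$; and the passage between $L^{\infty}$ and weighted norms via \eqref{eq:33} and \eqref{eq:36} contributes further logs. One must also verify that the absorption constants (namely $\hat c$ from Step~1 and the $\delta$-dependence in Step~2) remain independent of $t$, $h$ and $\gamma$ once $\gamma \ge \gamma_{0}$ is fixed sufficiently large. Keeping careful track of these contributions is precisely what produces the announced $\abs{\log h}^{4}$ power in \eqref{eq:55}.
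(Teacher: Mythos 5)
Your overall two--step architecture and your Step~1 coincide with the paper's proof: the paper also splits $\mat_{h}e=\sigma+\theta_{h}$ with $\sigma=\mat_{h}u-\ipoll\mat u$ and $\theta_{h}=\ipoll\mat u-\mat_{h}\Rmapl u$, estimates $I_{1}$ via \eqref{eq:58}, $I_{2}$ via the weighted interpolation machinery, and $I_{3}$ via \eqref{eq:59}. The genuine gap is in your Step~2, in the treatment of $\aast(\mat_{h}e,\ipoll w)$. You propose to bound this term directly by the pseudo--Galerkin inequality \eqref{eq:59} and then control $\wtH[-1]{\ipoll w}$, equivalently $\Hnorm{\ipoll w}$, by Lemma~\ref{lemma:wt_eigenvalues}. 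This cannot give the claimed rate: besides the harmless $h^{2}\wtH[1]{\mat_{h}e}$ part, the right--hand side of \eqref{eq:59} contains the term $h\abs{\log h}^{1/2}\bigl(\Wnorm[\surface(t)][2,\infty]{u}+\Wnorm[\surface(t)][1,\infty]{\mat u}\bigr)\wtH[-1]{\varphi_{h}}$, which carries only \emph{one} power of $h$. Pairing it with $\wtH[-1]{\ipoll w}\leq c\Hnorm{\ipoll w}\leq c\rho^{-1}\abs{\log\rho}^{1/2}\wtL[2]{\mat_{h}e}$ from \eqref{eq:45}, and using $h\rho^{-1}=\gamma^{-1/2}\abs{\log h}^{-1/2}$, leaves a factor of order $\gamma^{-1/2}\abs{\log\rho}^{1/2}\wtL[2]{\mat_{h}e}$ times the data norm; after Young's inequality the remainder is $c\gamma^{-1}\abs{\log\rho}$ times the squared data norm, with \emph{no positive power of $h$ at all}. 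Your Step~2 therefore only yields $\wtL[2]{\mat_{h}e}^{2}\lesssim\abs{\log h}(\dotsb)+\delta\wtH[1]{\mat_{h}e}^{2}$, which is useless for \eqref{eq:55}. The duality weight $w$ can contribute at best a factor $\rho^{-1}\sim h^{-1}$ relative to $\wtL[2]{\mat_{h}e}$, so an $O(h)$ consistency term can never be upgraded to the required $O(h^{2})$ this way.

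The paper avoids this by \emph{not} using \eqref{eq:59} as a black box in Step~2. It returns to the exact differentiated Ritz identity \eqref{eq:65} and, following Dziuk--Elliott, expands $\aast(\mat_{h}e,\ipoll w)$ into six terms $J_{1},\dotsc,J_{6}$: the geometric perturbation differences $J_{1},\dotsc,J_{4}$ each carry a genuine $h^{2}$ by Lemma~\ref{lemma:properties_of_wt_norm} together with \eqref{eq:63} and \eqref{eq:53}; $J_{5}$ is a velocity--difference term absorbed like $\aast(\mat_{h}e,w-\ipoll w)$ for $\gamma$ large; and the residual $J_{6}=(g+b)(v;u-\Rmapl u,w)$ is bounded by $c\Lnorm[][\infty]{u-\Rmapl u}\abs{\log\rho}^{1/2}\wtHk[2][-1]{w}$, i.e.\ by combining the $L^{\infty}$ Ritz error (an $O(h^{2}\abs{\log h}^{3/2})$ quantity) with the weighted $H^{2}$ a priori estimate \eqref{eq:24}. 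This last term is precisely the source of the $\abs{\log h}^{4}$ in \eqref{eq:64}. Your write--up is missing this return to \eqref{eq:65} and the term--by--term treatment, and without it the key factor $h^{2}$ in Step~2 cannot be recovered.
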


\begin{proof}
  This proof has a similar structure as Lemma~\ref{lemma:wt_ritz_error_estimate},
  and since it also uses similar arguments, we only give references if new lemmata are
  needed.  For the ease of presentation we set $e= u - \Rmapl u$ and split the error as follows
  \begin{equation*}
    \mat_{h}e =
    ( \mat_{h}u - \ipoll \mat u ) + (\ipoll \mat u - \mat_{h}\Rmapl u)
    =: \sigma + \theta_{h}.
  \end{equation*}
  \textbf{Step~1:}  Our goal is to prove   \begin{equation}
    \label{eq:32}
    \wtH[1]{ \mat_{h}e}^{2} \leq ch^{2} \abs{\log h} \big(\Wnorm[\surface(t)][2,\infty]{ u}^{2} +
    \Wnorm[\surface(t)][2,\infty]{\mat u}^{2} \big) + \hat c \wtL[2]{\mat_{h}e}^{2}.
  \end{equation}
  We start with
  \begin{equation*}
    \wtH[1]{\mat_{h}e}^{2} \leq a^{*}\Bigl(\mat_{h}e,
    \mu^{-1}\mat_{h}e \Bigr) + c \wtL[2]{\mat_{h} e}^{2}
  \end{equation*}
  and continue with
  \begin{align*}
    \aast(\mat_{h}e, \mu^{-1}\mat_{h}e)
    & = \aast(\mat_{h}e, \mu^{-1} \sigma) \\
    & \quad + \aast\bigl( \mat_{h}e, \mu^{-1}\theta_{h}
      - I(\mu^{-1}\theta_{h})\bigr) \\
    & \quad + \aast\bigl(\mat_{h}e, I(\mu^{-1}\theta_{h})\bigr)
%    \\ &
    = I_{1} + I_{2} + I_{3}.
  \end{align*}
  We estimate the three terms separately. For the first $\varepsilon$-Young inequality and Lemma~\ref{lemma:md_ipol_err}~\eqref{eq:58} yields
  \begin{align*}
    \abs{I_{1}}
    % ------------------------------------------------------------
    % Details
    % & \leq \wtH[1]{\mat_{h}e} \wtH[-1]{\mu^{-1}\sigma} \\
    % & \leq c \wtH[1]{\mat_{h}e} ( \wtL[2]{\sigma}+ \wtH[1]{\sigma}) \\
    % & \leq c \wtH[1]{\mat_{h}e} \frac{\rho}{\sqrt{\gamma}}
    %   ( \Wnorm[][2,\infty]{u} + \Wnorm[][2,\infty]{\mat u}) \\
    % ------------------------------------------------------------
    & \leq \varepsilon \wtH[1]{\mat_{h}e}^{2}
      + c h^{2}\abs{\log h}
      (\Wnorm[\surface(t)][2,\infty]{u}^{2}
      + \Wnorm[\surface(t)][2,\infty]{\mat u}^{2}).
  \end{align*}
  For a sufficiently small $0<h< h_{0}$ we obtain
  \begin{align*}
    \abs{I_{2}}
    % ------------------------------------------------------------
    % Details
    % & \leq \wtH[1]{\mat_{h}e}
    %   \wtH[-1]{\mu^{-1}\theta_{h}- I(\mu^{-1}\theta_{h})} \\
    % & \overset{h<h_{0}}{\leq}
    %   \varepsilon \wtH[1]{\mat_{h}e}
    %   (\wtL[2]{\theta_{h}}+ \wtH[1]{\theta_{h}}) \\
    % & \leq \varepsilon \wtH[1]{\mat_{h}e}
    %   ( \wtL[2]{\mat_{h}e} + \wtH[1]{\mat_{h}e}
    %   + \wtL[2]{\sigma}+ \wtH[1]{\sigma}) \\
    % & \leq \varepsilon \wtH[1]{\mat_{h}e}
    %   \bigl( \wtL[2]{\mat_{h}e} + \wtH[1]{\mat_{h}e}
    %   + \frac{\rho}{\sqrt{\gamma}}
    %    ( \Wnorm[][2,\infty]{u}+ \Wnorm[]{2,\infty}{\mat u}) \bigr) \\
    % ------------------------------------------------------------
    & \leq \varepsilon \wtH[1]{\mat_{h}e}^{2}
      + c\bigl( \wtL[2]{\mat_{h}e}^{2}
      + h^{2}\abs{\log h} (\Wnorm[\surface(t)][2,\infty]{u}^{2}
      + \Wnorm[\surface(t)][2,\infty]{\mat u}^{2})\bigr).
  \end{align*}
  Using Lemma~\ref{lemma:wt_md_pseudo_galerkin_eq}~\eqref{eq:59} and a
  $0<h<h_{1}$ sufficiently small we arrive at
  \begin{align*}
    \abs{I_{3}}
    % ------------------------------------------------------------
    % Details
  %   & \leq c\bigl( h^{2}\wtH[1]{\mat_{h}e}
  %     + \frac{\rho}{\sqrt{\gamma}} (\Wnorm[][2,\infty]{u}
  %     + \Wnorm[][2,\infty]{\mat u})\bigr)
  %     \wtH[-1]{\ipoll (\mu^{-1}\theta_{h})}.
  % \end{align*}
  % Now observe
  % \begin{align*}
  %   \wtH[-1]{\ipoll (\mu^{-1}\theta_{h})}
  %   & \leq \wtH[-1]{\ipoll (\mu^{-1}\theta_{h})- \mu^{-1}\theta_{h}}
  %     + \wtH[-1]{\mu^{-1}\theta_{h}} \\
  %   & \leq c ( \wtL[2]{\theta_{h}}+ \wtH[1]{\theta_{h}}) \\
  %   & \leq c \bigl(\wtL[2]{\mat_{h}e} + \wtH[1]{\mat_{h}e}
  %     + \frac{\rho}{\sqrt{\gamma}}(\Wnorm[][2,\infty]{u}
  %     + \Wnorm[][2,\infty]{\mat u})\bigr).
  % \end{align*}
  % For a sufficiently small $0<h<h_{1}$ we reach
  % \begin{align*}
  %   \abs{I_{3}}
    % ------------------------------------------------------------
    & \leq \varepsilon \wtH[1]{\mat_{h}e}^{2}
      + c\bigl( \wtL[2]{\mat_{h}e}^{2} + h^{2}\abs{\log h}
      (\Wnorm[\surface(t)][2,\infty]{u}^{2}
      + \Wnorm[\surface(t)][2,\infty]{\mat u}^{2})
      \bigr).
  \end{align*}
  These estimates together, and absorbing $\wtH[1]{\mat_{h}e}$, imply \eqref{eq:32}.  \par
  \textbf{Step~2:} Using again an Aubin--Nitsche like argument we show that, for any $\delta>0$ sufficiently small, we have
  \begin{equation}
    \label{eq:64}
    \wtL[2]{ \mat_{h}e}^{2} \leq \delta \wtH[1]{\mat_{h}e}^{2}
    + c h^{2}
    \abs{\log h}^{4}\big( \Wnorm[\surface(t)][2,\infty]{ u}^{2} +
    \Wnorm[\surface(t)][2,\infty]{\mat u}^{2} \big)  .
  \end{equation}
  Let $w\in H^{2}\bigl(\surface(t)\bigr)$ be the weak solution of
  \begin{equation*}
    -\lb w + w = \mu^{-2} \mat_{h}e.
  \end{equation*}
  Then we have
  \begin{equation*}
    \wtL[2]{\mat_{h}e}
    = \aast(\mat_{h}e, w - \ipoll w) +  \aast(\mat_{h}e, \ipoll w) .
%    = I_{1}+I_{2}.
  \end{equation*}
  Again let $\varepsilon>0$ be a small number.  For $\gamma > \gamma_{0}$ sufficiently big we get
  \begin{align*}
    \abs{\aast(\mat_{h}e, w - \ipoll w)} \leq {}
    % ------------------------------------------------------------
    % Details
% & \wtH[1]{\mat_{h} e} \wtH[-1]{w - \ipoll w} \\
% \leq {}
% & c\wtH[1]{\mat_{h}e} h \wtHk[2][-1]{w} \\
% \leq {}
% & c \wtH[1]{\mat_{h}e}h ( \wtL[-1]{\mu^{-2} \mat_{h}e}
%   + \Hnorm{w}) \\
% \leq {}
% & c \wtH[1]{\mat_{h}e} h ( \rho^{-1}
%   + \rho^{-1}\abs{\log \rho}^{1/2} ) \wtL[2]{\mat_{h}e} \\
% \overset{\gamma > \gamma_{0}}{\leq} {}
% & \sqrt{\delta} \sqrt{\varepsilon}
%   \wtH[1]{\mat_{h}e} \wtL[2]{\mat_{h}e} \\
% \leq {}
% & \delta \wtH[1]{\mat_{h}e}^{2}
%   + \varepsilon \wtL[2]{\mat_{h}e}^{2} \\
% \leq {}
    % ------------------------------------------------------------
    & \varepsilon \wtH[1]{\mat_{h}e}^{2}
      + \delta \wtL[2]{\mat_{h}e}^{2}
  \end{align*}
  Using equation~\eqref{eq:65} and proceeding similar like in Dziuk and Elliott
  \cite[Theorem~6.2]{DziukElliott_L2}, by adding and subtracting terms, we get
  \begin{align*}
    \aast(\mat_{h}e, \ipoll w) = {}
    & - \Big( \aast(\mat_{h}\Rmapl u, \ipoll w)
      - \aast_{h}(\mat_{h}\Rmap u, \ipol w) \\
    & + (g+b)(v_{h};u, \ipoll w)
      - (g_{h}+b_{h})(V_{h}; u^{-l}, \ipol w) \\
    & + (g_{h}+b_{h})(V_{h}; u^{-l}- \Rmap u , \ipol w)
      - (g+b)(v_{h};u - \Rmapl u, \ipoll w) \\
    & + (g+b)(v_{h}; u-\Rmapl u, \ipoll w) - (g+b)(v; u-\Rmapl u,\ipoll w) \\
    & + (g+b)(v;u-\Rmapl u, \ipoll w) - (g+b)(v;u-\Rmapl u, w) \\
    & + (g+b)(v;u-\Rmapl u, w)
      \Big) \\
    = {}
    & J_{1}+J_{2}+J_{3}+J_{4}
    +J_{5}+J_{6}.
  \end{align*}
  Use Lemma~\ref{lemma:properties_of_wt_norm}~\eqref{eq:62}, \eqref{eq:63},
  Lemma~\ref{lemma:wt_ritz_error_estimate}~\eqref{eq:53} and the inequality
  \begin{equation*}
    h\wtH[1]{\ipoll w} \leq
    \varepsilon \wtL[2]{\mat_{h}e},
  \end{equation*}
  for $\gamma>\gamma_{1}$ sufficiently big, we reach at
  \begin{multline*}
    \abs{J_{1}}+ \dotsb %\abs{J_{2}} + \abs{J_{3}}
    + \abs{J_{4}} \leq \delta \wtH[1]{\mat_{h}e}^{2}
      + \varepsilon \wtL[2]{\mat_{h}e}^{2}
      + ch^{2}(\Wnorm[][2,\infty]{u}^{2}
      + \Wnorm[][1,\infty]{\mat u}^{2}).
  \end{multline*}
  With the same arguments like for $\aast(\mat_{h}e, w - \ipoll w)$ we estimate
  \begin{equation*}
    \abs{J_{5}} \leq
    \varepsilon \wtH[1]{\mat_{h}e}^{2} + \delta \wtL[2]{\mat_{h}e}^{2},
  \end{equation*}
  for $\gamma>\gamma_{2}$ sufficiently big.  For \(\gamma> \gamma_{3}\)
  sufficiently big we estimate the last term as
  follows
  \begin{align*}
    \abs{J_{6}} \leq {}
    & c \wtL[1]{e} \wtHk[2][-1]{w} \\
    \leq {}
    & c \Lnorm[][\infty]{e} \abs{\log \rho}^{1/2}
      \wtHk[2][-1]{w} \\
    \leq {}
    & c h^{2} \abs{\log h}^{3/2} \Wnorm[][2,\infty]{u}
      \wtHk[2][-1]{w} \\
    \leq {}
    & \varepsilon \wtL[2]{\mat_{h}e}^{2}
      + ch^{2} \abs{\log h}^{4} \Wnorm[][2,\infty]{u}^{2}.
  \end{align*}
  By absorption, these estimates together imply \eqref{eq:64}.

  The final estimate is shown by combining \eqref{eq:32} and \eqref{eq:64}, and choosing $\delta>0$ such that $\hat c \delta <1$. Then an absorbtion finishes the proof.
\end{proof}

From the weighted version of the error estimate in the material derivatives, the $L^\infty$-norm estimate follows easily.
\begin{theorem}[Errors in the material derivative of the Ritz projection]
  \label{theorem: L infty errors in mat derivative of Ritz}
  Let $z\in W^{3,\infty}(\GT)$.  For a sufficiently small $h < h_{0}$ and a
  sufficiently big $\gamma> \gamma_{0}$ there exists $c= c(h_{0},\gamma_{0})>0$
  independent of $t$ and $h$ such that
  \begin{align*}
    \Lnorm[\surface(t)][\infty]{\mat_{h} \bigl(z
    - (\Rmap(t)z)^{l}\bigr)} %\\
    &\leq ch^{2}\abs{\log h}^{3} \bigl(\Wnorm[\surface(t)][2,\infty]{z} +
    \Wnorm[\surface(t)][2,\infty]{\mat z}\bigr), \\
    \Wnorm[\surface(t)][1,\infty]{\mat_{h} \bigl(z
    -(\Rmap(t)z\bigr)^{l})}% \\
    & \leq ch \abs{\log h}^{5/2} \bigl(\Wnorm[\surface(t)][2,\infty]{z} +
    \Wnorm[\surface(t)][2,\infty]{\mat z}\bigr).
  \end{align*}
\end{theorem}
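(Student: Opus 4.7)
\medskip\noindent\textbf{Proof proposal.}
The plan is to mirror the proof of Theorem~\ref{theorem: L infty errors in Ritz}, with Lemma~\ref{lemma:wt_md_ritz_error_estimate} replacing Lemma~\ref{lemma:wt_ritz_error_estimate} and Lemma~\ref{lemma:md_ipol_err} replacing the ordinary interpolation estimate. Before starting, I note that the transport property \eqref{eq: transport property} guarantees that $\mat_h \Rmap(t) z \in S_h(t)$: writing $\Rmap(t) z = \sum_j \alpha_j(t) \chi_j$ gives $\mat_h \Rmap(t) z = \sum_j \alpha_j'(t)\chi_j$. Consequently $\ipol \mat z - \mat_h \Rmap z \in S_h(t)$, and its lift lies in $S_h^l(t)$, so the weighted-to-maximum-norm inequalities of Lemma~\ref{lemma:w1i_to_wt_H1} apply to it.

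First I would split via the interpolant of the continuous material derivative,
\begin{equation*}
  \mat_h\bigl(z - \Rmapl z\bigr)
  = \bigl(\mat_h z - \ipoll \mat z\bigr) + \bigl(\ipoll \mat z - \mat_h \Rmapl z\bigr).
\end{equation*}
The first summand is bounded in the $W^{k,\infty}$-norm ($k \in \{0,1\}$) directly by Lemma~\ref{lemma:md_ipol_err}~\eqref{eq:35}, giving the contribution $c h^{2-k}(\Wnorm[\surface(t)][2,\infty]{z} + \Wnorm[\surface(t)][2,\infty]{\mat z})$. For the second summand, which is a lifted finite element function by the remark above, I would apply the norm-conversion inequalities \eqref{eq:56} and \eqref{eq:43} of Lemma~\ref{lemma:w1i_to_wt_H1} to pass to the weighted $L^{2}$- and $H^{1}$-norms at the cost of a factor $h\abs{\log h}$ and $\abs{\log h}^{1/2}$, respectively.

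Then a further triangle inequality through $\mat_h z$ reduces the resulting weighted norms to two ingredients: the weighted interpolation estimate $\wtL[2]{\mat_h z - \ipoll \mat z} + \wtH[1]{\mat_h z - \ipoll \mat z}$ controlled by Lemma~\ref{lemma:md_ipol_err}~\eqref{eq:58} (contributing $h\abs{\log h}^{1/2}$), and the weighted material-derivative Ritz estimate $\wtL[2]{\mat_h z - \mat_h \Rmapl z} + \wtH[1]{\mat_h z - \mat_h \Rmapl z}$ controlled by Lemma~\ref{lemma:wt_md_ritz_error_estimate}~\eqref{eq:55} (contributing $h\abs{\log h}^{2}$, which dominates). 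Multiplying the factors produces $h \cdot h\abs{\log h} \cdot \abs{\log h}^{2} = h^{2}\abs{\log h}^{3}$ for the $L^{\infty}$-bound and $h \cdot \abs{\log h}^{1/2} \cdot \abs{\log h}^{2} = h\abs{\log h}^{5/2}$ for the $W^{1,\infty}$-bound, matching the claim exactly.

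No genuinely new analytic difficulty arises: the heavy lifting has been done in the weighted material-derivative Ritz estimate. The main thing to be careful about is bookkeeping of the logarithmic powers and the commutation of the material derivative with the lift, i.e.\ that $\mat_h(\phi_h^l) = (\mat_h \phi_h)^l$ when the continuous velocity $v_h$ is the lift of $V_h$, so that $\mat_h \Rmapl z$ really is the lift of $\mat_h \Rmap z \in S_h(t)$ and Lemma~\ref{lemma:w1i_to_wt_H1} can be applied as above.
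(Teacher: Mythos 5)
Your proposal is correct and follows essentially the same route as the paper, which proves the theorem by repeating the argument of Theorem~\ref{theorem: L infty errors in Ritz} with Lemma~\ref{lemma:wt_md_ritz_error_estimate}~\eqref{eq:55} as the main tool and the splitting $\mat_h(z-\Rmapl z)=(\mat_h z-\ipoll\mat z)+(\ipoll\mat z-\mat_h\Rmapl z)$ already used there. Your logarithmic bookkeeping ($h\abs{\log h}\cdot h\abs{\log h}^{2}$ and $\abs{\log h}^{1/2}\cdot h\abs{\log h}^{2}$) and the remark that the transport property makes $\mat_h\Rmap(t)z$ a genuine finite element function are exactly the points the paper leaves implicit.
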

\begin{proof}
The above results are shown by exactly following the proof of
Theorem~\ref{theorem: L infty errors in Ritz},
Lemma~\ref{lemma:wt_md_ritz_error_estimate}~\eqref{eq:55} being the main tool.
\end{proof}

\section{Maximum norm parabolic stability}
\label{sec:weak-discr-maxim-1}

The purpose of this section is to derive a ESFEM weak discrete maximum
principle.  The proof is modeled on the weak discrete maximum principle from
Schatz, Thom\'{e}e, Wahlbin \cite{1980_stw}.  For
this we are going to need a well known matrix formulation of
\eqref{eq: semidiscrete problem}, which is due to Dziuk and Elliott
\cite{DziukElliott_ESFEM}.  It
was first used in Dziuk, Lubich, Mansour \cite{DziukLubichMansour_rksurf} for
theoretical reasons, namely a
time discretization of \eqref{eq: semidiscrete problem}.  Using the matrix
formulation we derive a discrete adjoint problem of \eqref{eq: semidiscrete
  problem}, which does not arise in Schatz, Thom\'{e}, Wahlbin \cite{1980_stw},
but arises here, since the ESFEM evolution operator is not
self adjoint.  Then we deduce a corresponding a priori estimate and finally
prove our weak discrete maximum principle.

\subsection{A discrete adjoint problem}

A matrix ODE version of \eqref{eq: semidiscrete problem} can be derived by setting
\begin{equation*}
    U_h( \, . \,,t) = \sum_{j=1}^N \alpha_j\t \chi_j( \, . \,,t),
\end{equation*}
testing with the basis function $\phi_h=\chi_j$, where \(S_{h}(t) = \linspan \{
\chi_{j}\mid j =1,\dotsc, N\} \), and using the transport property
\eqref{eq: transport property}.

\begin{proposition}[ODE system]
%\label{prop: ODE system}
    The spatially semidiscrete problem \eqref{eq: semidiscrete problem} is equivalent to the following linear ODE system for the vector $\alpha\t=(\alpha_j\t)\in\R^N$, collecting the nodal values of $U_h(.,t)$:
    \begin{equation}\label{eq: ODE system}
        \disp
        \begin{cases}
            \begin{alignedat}{2}
                \diff \Big(M\t \alpha\t\Big) + A\t \alpha\t &= 0 \\
                \alpha(0) &= \alpha_0
            \end{alignedat}
        \end{cases}
    \end{equation}
    where the evolving mass matrix $M\t$ and stiffness matrix $A\t$ are defined as
    \begin{equation*}
        M(t)_{kj} = \int_{\Ga_h\t}\!\!\!\! \chi_j \chi_k, \qquad A\t_{kj} =
        \int_{\Ga_h\t}\!\!\! \nbgh \chi_j \cdot \nbgh  \chi_k.
    \end{equation*}
\end{proposition}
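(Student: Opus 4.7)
The plan is to carry out the standard Galerkin substitution and exploit the transport property \eqref{eq: transport property} of the nodal basis. First, I would substitute the expansion $U_h(\cdot,t) = \sum_{j=1}^{N} \alpha_j(t)\chi_j(\cdot,t)$ into \eqref{eq: semidiscrete problem} and test with $\phi_h = \chi_k$ for each $k = 1, \dots, N$. The basis functions $\chi_k$ are admissible test functions because, by \eqref{eq: transport property}, $\mat_h \chi_k = 0 \in S_h(t)$.

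Next, by the linearity of the integral in the coefficients $\alpha_j(t)$, which depend only on $t$ and can be pulled out of the spatial integral, I would rewrite the two terms on the left-hand side of \eqref{eq: semidiscrete problem} as
\begin{equation*}
  \diff\!\!\int_{\Ga_h(t)}\!\!\!\! U_h \chi_k = \diff\!\Big(\sum_{j=1}^{N} \alpha_j(t) M(t)_{kj}\Big) = \diff(M(t)\alpha(t))_k,
\end{equation*}
and $\int_{\Ga_h(t)} \nbgh U_h \cdot \nbgh \chi_k = (A(t)\alpha(t))_k$, using the definitions of $M(t)$ and $A(t)$ given in the proposition. The crucial observation is that the right-hand side $\int_{\Ga_h(t)} U_h \mat_h \chi_k$ vanishes identically because $\mat_h \chi_k = 0$ by \eqref{eq: transport property}. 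Assembling these $N$ scalar identities into one vector equation yields \eqref{eq: ODE system}, and the initial condition $\alpha(0)=\alpha_0$ is obtained by expanding $U_h^0 \in S_h(0)$ in the basis at $t=0$.

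For the converse direction (i.e.\ that a solution of the ODE system yields a solution of the weak semidiscrete problem), I would observe that any admissible test function $\phi_h \in S_h(t)$ with $\mat_h \phi_h \in S_h(t)$ can be written as $\phi_h(\cdot,t) = \sum_{k=1}^{N} \beta_k(t) \chi_k(\cdot,t)$ for some smooth scalar coefficients $\beta_k(t)$, again by virtue of the transport property (so that $\mat_h \phi_h = \sum_k \dot\beta_k \chi_k$ is automatically in $S_h(t)$ provided the $\beta_k$ are differentiable). Hence by linearity of \eqref{eq: semidiscrete problem} in $\phi_h$, it suffices that the equation holds for each $\chi_k$, which is exactly the $k$-th component of \eqref{eq: ODE system}.

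There is no real obstacle here beyond careful bookkeeping; the only point requiring attention is the interchange of $\tfrac{\d}{\d t}$ with the finite sum over $j$, which is legitimate since the $\alpha_j$ are (by definition) smooth scalar functions of $t$, and the crucial algebraic simplification comes solely from the transport property $\mat_h \chi_k = 0$, which collapses the right-hand side of \eqref{eq: semidiscrete problem} to zero when testing against basis functions.
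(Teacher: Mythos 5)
Your proposal is correct and follows exactly the derivation the paper sketches just before the proposition: substitute the nodal expansion, test with the basis functions $\chi_k$, and use the transport property \eqref{eq: transport property} to annihilate the right-hand side of \eqref{eq: semidiscrete problem}. The converse direction you add (expanding an arbitrary admissible test function in the moving basis) is the standard completion of the equivalence and is consistent with what the paper leaves implicit.
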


\begin{definition}
  Let $0\leq s \leq t \leq T$.  For given initial value $w_{h}\in S_{h}(s)$ at time $s$,
  there exists unique\footnote{cf.\ Dziuk and Elliott
    \cite{DziukElliott_ESFEM}. } solution $u_{h}$.  This defines a linear
  evolution operator
  \begin{equation*}
    E_{h}(t,s)\colon S_{h}(s)\to S_{h}(t),\quad w_{h}\mapsto u_{h}(t).
  \end{equation*}
  We define the adjoint of $E_{h}(t,s)$
  \begin{equation*}
    E_{h}(t,s)^{*}\colon S_{h}(t) \to S_{h}(s)
  \end{equation*}
  via the equation
  \begin{equation}
    \label{eq:21}
    m_{h}\bigl(t; E_{h}(t,s)\varphi_{h}(s), w_{h}(t)\bigr) = m_{h}\bigl(s;
    \varphi_{h}(s), E_{h}(t,s)^{*} w_{h}(t)\bigr),
  \end{equation}
  where $\varphi_{h}(s)\in S_{h}(s)$ and $w_{h}(t) \in S_{h}(t)$ are some arbitrary
  finite element functions.
\end{definition}

\begin{lemma}[Adjoint problem]
  \label{lemma:adjoint_problem}
  Let $s\in [0,t]$ where $t\in[0,T]$ and $w_{h}(t)\in S_{h}(t)$.  Then $s\mapsto
  E(t,s)^{*}w_{h}(t)$ is the unique solution of
  \begin{align}
    \label{eq:22}
    \left\{
    \begin{aligned}
    m_{h}\bigl(s;\partial^{\bullet,s}_{h} u_{h}, \varphi_{h}\bigr) - a_{h}(s;
    u_{h},\varphi_{h}) =&\ 0,  &\quad& \textrm{ on } \Ga(s)\\
    u_{h}(t) =&\ w_{h}(t),  &\quad& \textrm{ on } \Ga(t).
    \end{aligned}
    \right.
  \end{align}
  where $\partial^{\bullet,s}_{h}$ is the discrete material derivative with respect to
  $s$.
\end{lemma}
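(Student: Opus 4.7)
The plan is to derive the backward equation by choosing a test function in the defining relation \eqref{eq:21} that renders its left-hand side independent of $s$. For any $\phi_h^{0}\in S_h(0)$, let $\phi_h(r):=E_h(r,0)\phi_h^{0}$ for $r\in[0,t]$ denote the corresponding forward semidiscrete solution, and set $u_h(s):=E_h(t,s)^{*}w_h(t)$. By the semigroup identity $E_h(t,s)\phi_h(s)=E_h(t,0)\phi_h^{0}=\phi_h(t)$, substituting $\varphi_h(s)=\phi_h(s)$ into \eqref{eq:21} gives
\begin{equation*}
m_h\bigl(t;\phi_h(t),w_h(t)\bigr)=m_h\bigl(s;\phi_h(s),u_h(s)\bigr),
\end{equation*}
whose left-hand side is constant in $s$.

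Differentiating the right-hand side in $s$ via the discrete transport property \eqref{eq: discrete transport properties} yields
\begin{equation*}
0=m_h(\dellmat\phi_h,u_h)+m_h(\phi_h,\dellmat u_h)+g_h(V_h;\phi_h,u_h).
\end{equation*}
Independently, combining \eqref{eq: discrete transport properties} with the forward semidiscrete equation \eqref{eq: semidiscrete problem} tested against each basis function $\chi_k$ (which satisfies $\dellmat\chi_k=0$ by \eqref{eq: transport property}) and extending by linearity yields
\begin{equation*}
m_h(\dellmat\phi_h,\eta_h)+g_h(V_h;\phi_h,\eta_h)+a_h(\phi_h,\eta_h)=0 \quad \text{for every }\eta_h\in S_h(s).
\end{equation*}
Setting $\eta_h=u_h(s)$ in the latter and substituting into the former, the two $g_h$ contributions cancel, leaving $m_h(\phi_h,\dellmat u_h)-a_h(\phi_h,u_h)=0$. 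Since $E_h(s,0)\colon S_h(0)\to S_h(s)$ is a bijection between finite-dimensional spaces of equal dimension, $\phi_h(s)$ ranges over all of $S_h(s)$ as $\phi_h^{0}$ varies, so \eqref{eq:22} holds for every test function $\varphi_h\in S_h(s)$.

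The terminal condition $u_h(t)=w_h(t)$ is immediate from \eqref{eq:21} at $s=t$, since $E_h(t,t)=\mathrm{Id}$ and $m_h(t;\cdot,\cdot)$ is non-degenerate on $S_h(t)$. Uniqueness follows from the matrix formulation: writing $u_h(s)=\sum_j\beta_j(s)\chi_j(\cdot,s)$ and using $\dellmat\chi_k=0$, the backward problem \eqref{eq:22} reduces to the linear ODE $M(s)\dot\beta(s)-A(s)\beta(s)=0$ with terminal datum $\beta(t)$, which is well-posed backward in time. I expect the only delicate point to be the cancellation of the $g_h$ terms, which encodes the compatibility between the adjoint operation and the convective contribution arising from the evolving frame; once that is recognised, everything else reduces to two applications of the transport property.
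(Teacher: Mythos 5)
Your argument is correct, but it follows a genuinely different route from the paper's. The paper works entirely at the matrix level: it writes $\bm{E}_{h}(t,s)=M(t)^{-1}\Lambda(t,s)M(s)$ in terms of the resolvent $\Lambda(t,s)$ of $\dot{\xi}+A(t)M(t)^{-1}\xi=0$, identifies $\bm{E}_{h}(t,s)^{*}=\Lambda(t,s)^{T}$ from \eqref{eq:21}, and differentiates the inverse resolvent in $s$ to obtain $\ddti{\bm{E}_{h}(t,s)^{*}}{s}=M(s)^{-1}A(s)\bm{E}_{h}(t,s)^{*}$, which is the matrix transcription of \eqref{eq:22}. You instead argue at the level of the bilinear forms: choosing the test function in \eqref{eq:21} to be a forward solution makes the left-hand side $s$-independent, and differentiating the resulting conserved pairing with the transport property, then subtracting the forward equation (in its \eqref{eq:20}-form, obtained by testing against the basis functions with $\dellmat\chi_{k}=0$), yields \eqref{eq:22} after the two $g_{h}$ terms cancel --- a cancellation that works because $g_{h}$ is symmetric in its last two arguments. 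Your approach is coordinate-free, exploits the defining duality directly, and makes structurally visible why the convective term is absent from the adjoint equation; the paper's computation is more mechanical but gives the backward well-posedness for free via the explicit resolvent, which you must (and do) recover separately through the matrix formulation in your uniqueness step. The only points you pass over lightly are the injectivity of $E_{h}(s,0)$ (needed so that $\phi_{h}(s)$ exhausts $S_{h}(s)$), which follows from two-sided uniqueness of the linear ODE system since $M(s)$ is invertible, and the smoothness in $s$ of the coefficient vectors justifying the application of the transport property; both are standard and easily supplied.
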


\begin{remark}
  The problem \eqref{eq:22} has the structure of a backward heat equation, where $s$ is
  going backward in time.  Hence we considered \eqref{eq:22} as a PDE of
  parabolic type.  We recall, that using Lemma~\ref{lemma: transport properties} we may write
  equation~\eqref{eq: semidiscrete problem} equivalently as
  \begin{equation}
    \label{eq:20}
    \left\{
    \begin{aligned}
      m_{h}\bigl(t;\dellmat_{h} u_{h} + (\nbgh\cdot V_{h}) u_{h},\varphi_{h}\bigr)
      + a_{h}(t; u_{h},\varphi_{h}) &= 0, &\quad& \textrm{ on } \Ga(t),\\
      u_{h}(0) &= w_{h}, &\quad& \textrm{ on } \Ga(0)
    \end{aligned}
    \right.
  \end{equation}
  The problems \eqref{eq:20} and \eqref{eq:22} differ in the following way: If the
  initial data for \eqref{eq:22} is constant then it remains so for all times.
  In general this does not hold for solutions of \eqref{eq:20}.  On the other
  hand \eqref{eq:20} preserves the mean value of its initial data, which is in
  general not true for a solution of \eqref{eq:22}.
\end{remark}

\begin{proof}[Proof of Lemma~\ref{lemma:adjoint_problem}.]
  First we investigate the finite element matrix representation of $E_{h}(t,s)$
  with respect to the standard finite element basis, which we denote by
  $\bm{E}_{h}(t,s)$.  From \eqref{eq: ODE system} we have
  \begin{equation*}
  \ddti{}{t}\bigl(M(t) \bm{E}_{h}(t,0)\bm{u}_{h}(0)\bigr) + A(t)
  \bm{E}_{h}(t,0)\bm{u}_{h}(0) = 0.
  \end{equation*}
  Let $\Lambda(t,s)$ the resolvent matrix of the ODE
  \begin{equation*}
    \ddti{\xi}{t} + A(t)M(t)^{-1} \xi = 0.
  \end{equation*}
  Then obviously it holds
  \begin{equation*}
    \bm{E}_{h}(t,s)  = M(t)^{-1} \Lambda(t,s) M(s).
  \end{equation*}
  Denote by $\bm{E}_{h}(t,s)^{*}$ the matrix representation of $E_{h}(t,s)^{*}$.
  From equation~\eqref{eq:21} it follows
  \begin{equation*}
    \bm{E}_{h}(t,s)^{*} = M(s)^{-1} \bm{E}_{h}(t,s)^{T} M(t) = \Lambda(t,s)^{T}.
  \end{equation*}
  Now we calculate $\ddti{\Lambda(t,s)}{s}$.  Note that $\Lambda(t,s)=
  \Lambda(s,t)^{-1}$ and it holds
  \begin{equation*}
    \ddti{\Lambda(s,t)^{-1}}{s} = -\Lambda(s,t)^{-1}\ddti{\Lambda(s,t)}{s}
    \Lambda(s,t)^{-1}.
  \end{equation*}
  From that it easily follows
  \begin{equation*}
    \ddti{\Lambda(t,s)}{s} = \Lambda(t,s) A(s)M(s)^{-1},
  \end{equation*}
  which now implies
  \begin{equation*}
    \ddti{\bm{E}_{h}(t,s)^{*}}{s} = M(s)^{-1}A(s)\bm{E}_{h}(t,s)^{*}. \qedhere
  \end{equation*}
\end{proof}

\subsection{A discrete delta and Green's function}

Let $\delta_{h} = \delta_h^{x_h} = \delta_{h}^{t,x_{h}} \in S_h\t$ be a finite
element discrete delta function defined as
\begin{equation}
  \label{eq:15}
  m_h(t;\delta_h^{t,x_h},\vphi_h) = \vphi_h(x_h,t) \qquad (\vphi_h \in S_h\t).
\end{equation}
If \(\delta^{x_{h}}\colon \surface_{h}(t)\to \R\)
is a smooth function having support in the triangle \(E_{h}\)
containing \(x_{h}\),
then since \(\dim \surface_{h}(t) = 2 \)
one easily calculates
\(\Lnorm[\surface_{h}(t)]{\delta^{x_{h}} \pwt^{x_{h}}} \leq c\)
for some constant independent of \(h\)
and \(t\).  For the discrete delta function \(\delta_{h}\) a similar
result holds.

\begin{lemma}
  \label{lemma:delta_rho_bound}
  There exists $c>0$ independent of $t$ and \(h\):
  \begin{equation*}
    \|\pwt^{x_{h}} \delta_h^{x_h} \|_{L^2(\Ga_h\t)}
    \leq c \qquad (x_h \in \Ga_h\t).
  \end{equation*}
\end{lemma}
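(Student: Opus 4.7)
The strategy is to test the defining property of $\delta_h^{x_h}$ with a finite element function that, up to a controllable error, equals $\sigma^2 \delta_h^{x_h}$, and then use the $L^2$-projection estimate from Lemma~\ref{lemma:weighted_l_two_proj_err} together with a standard inverse estimate to close the argument by absorption. Throughout I abbreviate $\delta_h = \delta_h^{x_h}$, $\sigma = \sigma^{x_h}$, and let $E_0$ denote the triangle of $\mathcal{T}_h(t)$ containing $x_h$.

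\textbf{Step 1: a direct $L^2$-bound on $\delta_h$.} Testing \eqref{eq:15} with $\delta_h$ itself gives $\|\delta_h\|_{L^2(\Ga_h(t))}^2 = \delta_h(x_h)$. Combining this with the inverse estimate (Lemma~\ref{lemma:inverse_estimate}, $m=2$, $k=0$, $p=\infty$, $q=2$) applied on $E_0$, namely $\delta_h(x_h) \leq \|\delta_h\|_{L^\infty(E_0)} \leq c h^{-1}\|\delta_h\|_{L^2(E_0)}$, yields $\|\delta_h\|_{L^2(\Ga_h(t))} \leq c h^{-1}$ and hence also $\sigma(x_h)^2\,\delta_h(x_h) = h^2 \delta_h(x_h) \leq c$.

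\textbf{Step 2: expressing $\|\sigma\delta_h\|_{L^2}^2$ as a point value.} Set $\psi_h := \Lproj(\sigma^2 \delta_h) \in S_h(t)$. Since $\delta_h \in S_h(t)$, the defining property \eqref{eq:47} of $\Lproj$ gives
\begin{equation*}
\|\sigma \delta_h\|_{L^2(\Ga_h(t))}^2 \;=\; m_h(\delta_h,\sigma^2\delta_h) \;=\; m_h(\delta_h,\psi_h) \;=\; \psi_h(x_h),
\end{equation*}
where the last equality uses \eqref{eq:15}. It remains to bound $\psi_h(x_h)$.

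\textbf{Step 3: local inverse estimate plus weighted projection error.} Write $\psi_h(x_h) = \bigl(\psi_h - \sigma^2\delta_h\bigr)(x_h) + \sigma(x_h)^2\delta_h(x_h)$. The second term is bounded by $c$ by Step 1. For the first, since on $E_0$ both $\psi_h$ and $\sigma^2\delta_h$ are low degree polynomials, a standard inverse estimate in $L^\infty$ versus $L^2$ on the single element $E_0$ gives
\begin{equation*}
\bigl|(\psi_h - \sigma^2\delta_h)(x_h)\bigr| \;\leq\; \|\psi_h - \sigma^2\delta_h\|_{L^\infty(E_0)} \;\leq\; c h^{-1}\|\psi_h - \sigma^2\delta_h\|_{L^2(\Ga_h(t))}.
\end{equation*}
Now apply Lemma~\ref{lemma:weighted_l_two_proj_err} with $\phi_h = \delta_h$ to estimate $\|\psi_h - \sigma^2\delta_h\|_{L^2}$ by $c h^2(\|\delta_h\|_{L^2} + \|\sigma\nbgh \delta_h\|_{L^2})$. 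On each element the weight $\sigma$ oscillates by a bounded factor (since $\sigma \geq h$ everywhere), so an element-wise inverse estimate yields $\|\sigma \nbgh\delta_h\|_{L^2(\Ga_h(t))} \leq c h^{-1}\|\sigma\delta_h\|_{L^2(\Ga_h(t))}$. Combining these bounds and using Step 1 once more gives
\begin{equation*}
\bigl|(\psi_h - \sigma^2\delta_h)(x_h)\bigr| \;\leq\; c h\|\delta_h\|_{L^2} + c\|\sigma\delta_h\|_{L^2} \;\leq\; c + c\|\sigma\delta_h\|_{L^2(\Ga_h(t))}.
\end{equation*}

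\textbf{Step 4: absorption.} Inserting the two bounds into Step 2 yields the quadratic inequality
\begin{equation*}
\|\sigma\delta_h\|_{L^2(\Ga_h(t))}^2 \;\leq\; c + c\|\sigma\delta_h\|_{L^2(\Ga_h(t))},
\end{equation*}
from which the claim follows by Young's inequality. The main obstacle in this plan is Step 3, specifically verifying that Lemma~\ref{lemma:weighted_l_two_proj_err} can be applied with $\phi_h = \delta_h$ and that one obtains the correct powers of $h$, so that the factor multiplying $\|\sigma\delta_h\|_{L^2}$ on the right-hand side is genuinely of order one (and not of order $h^{-1}$) and absorption is legal. The constants are uniform in $t$ because all ingredients (inverse estimates, Lemma~\ref{lemma:weighted_l_two_proj_err}, and the bounds in Lemma~\ref{lemma:rho_calculation}) are uniform in $t$.
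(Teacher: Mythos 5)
Your argument is correct, and it takes a genuinely different route from the one the paper intends. The paper's proof (given only by reference) is the direct adaptation of Schatz--Thom\'{e}e--Wahlbin: one writes $\delta_h^{x_h} = \Lproj\tilde\delta$ for a regularized delta $\tilde\delta$ supported in the element $E_0$ containing $x_h$ with $\|\tilde\delta\|_{L^2}\leq ch^{-1}$, decomposes $\Ga_h(t)$ into dyadic annuli $A_j=\{2^{j-1}h\leq |x-x_h|\leq 2^j h\}$, invokes the exponential decay property \eqref{eq:14} of Theorem~\ref{theorem:old_L2_bound} to get $\|\delta_h\|_{L^2(A_j)}\leq c e^{-c2^j}h^{-1}$, and sums, using $\pwt\leq c2^jh$ on $A_j$. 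You avoid the exponential decay entirely: you convert $\|\pwt\delta_h\|_{L^2}^2$ into the point value $\psi_h(x_h)$ of the projected weighted delta, control $\psi_h-\pwt^2\delta_h$ by the superapproximation estimate of Lemma~\ref{lemma:weighted_l_two_proj_err}, and close by absorption, exactly as you anticipate in your "main obstacle" remark --- and the powers of $h$ do work out: the factor multiplying $\|\pwt\delta_h\|_{L^2}$ is $h^{-1}\cdot h^{2}\cdot h^{-1}=O(1)$. Both routes are sound; yours is more self-contained relative to the lemmas actually stated in this paper (it needs only Lemma~\ref{lemma:weighted_l_two_proj_err}, Lemma~\ref{lemma:inverse_estimate} and the two defining identities \eqref{eq:15}, \eqref{eq:47}), whereas the dyadic/decay argument yields strictly more, namely the localized bounds on annuli and the whole family $\|\pwt^{s}\delta_h\|_{L^2}\leq ch^{s-1}$ essentially for free. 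Two small points you should make explicit in a final write-up: Lemma~\ref{lemma:inverse_estimate} is stated for $\varphi_h\in S_h(t)$, whereas in Step~3 you apply the $L^\infty$--$L^2$ inverse estimate to $\psi_h-\pwt^2\delta_h$, which on $E_0$ is a polynomial of degree $3$ (harmless, but an extension of the stated lemma); and the element-wise comparability of $\pwt$ used for $\|\pwt\nbgh\delta_h\|_{L^2}\leq ch^{-1}\|\pwt\delta_h\|_{L^2}$ follows from $\pwt\geq h$ and $\mathrm{diam}(E)\leq h$, which give $\max_E\pwt\leq \sqrt{3}\,\min_E\pwt$ --- worth recording, since the legality of the absorption hinges on it.
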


The proof is a straight forward extension of the corresponding one in Schatz,
Thom\'{e}e, Wahlbin \cite{1980_stw} and uses the exponential decay property of
the \(L^{2}\)-projection, cf. Theorem~\ref{theorem:old_L2_bound}~\eqref{eq:14}.
% \begin{proof}
% We again fix the time $t$, and $x_h\in\Ga_h\t$, and then drop the indices. We define
% \begin{align*}
%     \Ga_j\t =&\ \big\{ y_h\in\Ga_h\t \,\,\big|\,\, 2^{j-1}h < |x_h-y_h| \leq 2^{j}h \big\}, \\
%     \Ga_0\t =&\ \big\{ y_h\in\Ga_h\t \,\,\big|\,\, |x_h-y_h| \leq h \big\}.
% \end{align*}
% Using \eqref{eq:42}, % Lemma~\ref{lemma: rho equivalence},
% we obtain the bound
% \begin{equation*}
%     \pwt^{-l}(y_h) \leq c h 2^j \for y_h \in \Ga_j\t,
% \end{equation*}
% and hence we also have
% \begin{equation}
% \label{eq: exponentially scaled rho-delta est}
%     \|\pwt^{-l} \delta\| \leq c \sum_{j\geq0} h 2^j \|\delta\|_{L^2(\Ga_j\t)}
% \end{equation}
% The norm of $\delta$ can be using
% \begin{equation*}
%     \|\delta\|_{L^2(\Ga_j\t)} = \sup_{\substack{\vphi\in C_0^\infty(\Ga_j\t), \\ \|\vphi\|_{L^2(\Ga_j\t)}=1}} m_h(\delta,\vphi)\ .
% \end{equation*}
% Let us temporarily fix $\vphi$, and consider the triangle $E_h$ containing $x_h$. By inverse inequality, the exponential decay property of $\Lproj$, and the trivial estimate $|x_h-y_h| \leq \dist_h(x_h,y_h)$, we obtain
% \begin{align*}
%     m_h(\delta,\vphi) =&\ m_h(\delta,\Lproj \vphi) = (\Lproj \vphi)(x_h) \\
%     \leq &\ c h\inv \|\Lproj \vphi\|_{L^2(E_h)} \leq c h\inv e^{-c2^j} \|\vphi\|_{L^2(\Ga_j)}.
% \end{align*}
% Therefore, by reinserting the estimate
% \begin{equation*}
%     \|\delta\|_{L^2(\Ga_j)} \leq c h\inv e^{-c2^j}
% \end{equation*}
% into \eqref{eq: exponentially scaled rho-delta est}, we completed the proof.
% \end{proof}
\bigskip

Next we define a finite element discrete Green's function as follows.  Let
$s\in [0,T]$.  For given $u_{h}\in S_{h}(s)$ there exists a unique
$\psi_{h}\in S_{h}(s)$ such that
\begin{equation*}
  a_{h}^{*}(s; \psi_{h}, \varphi_{h}) =
  m_{h}(s; u_{h}, \varphi_{h}) \quad \forall \varphi_{h}\in S_{h}(s).
\end{equation*}
This defines an operator
\begin{equation*}
  T_{h}^{*,s}\colon S_{h}(s)\to S_{h}(s),\quad T_{h}^{*,s}u_{h} := \psi_{h}.
\end{equation*}
We call $G^{s,x}_{h}:= T_{h}^{*,s}\delta_{h}^{s,x}$ a discrete Green's
function.  \par
A short calculation shows that for all $0\neq \varphi_{h}\in S_{h}(s)$ it holds
\begin{equation*}
  m_{h}(s;T_{h}^{*,s} \varphi_{h}, \varphi_{h}) >0,
\end{equation*}
which implies that $G_{h}^{s,x}(x) > 0$.  Actually we can bound the singularity
\(x\) with \(c \abs{\log h}\).

\begin{lemma}
  \label{lemma:discrete_greens_function}
  For the discrete Green's function $G^{s,x}_{h}$ we have the estimate
  \[
  G^{s,x}_{h}(x) \leq c \abs{\log h}.
  \]
\end{lemma}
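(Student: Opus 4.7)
The strategy parallels the discrete Green's function bound of Schatz--Thom\'ee--Wahlbin (1980), adapted to the evolving-surface setting. The starting observation exploits the symmetry of $a_h^\ast$: testing the defining relation with $\varphi_h = G_h := G_h^{s,x}$ gives the energy identity
\[
G_h(x) = a_h^\ast(s; G_h, G_h),
\]
so in particular $G_h(x) \geq 0$ (which recovers the positivity already noted). For the upper bound I would instead use the representation $G_h(x) = m_h(s; \delta_h^{s,x}, G_h)$ together with the weighted Cauchy--Schwarz inequality against the weight $\sigma = \sigma^{x}$ of \eqref{eq:10} and Lemma~\ref{lemma:delta_rho_bound} to obtain
\[
G_h(x) \leq \|\sigma \delta_h^{s,x}\|_{L^2(\Gamma_h(s))}\, \|\sigma^{-1} G_h\|_{L^2(\Gamma_h(s))} \leq c\,\|\sigma^{-1} G_h\|_{L^2(\Gamma_h(s))}.
\]
This reduces the lemma to establishing the weighted $L^2$-bound
\begin{equation*}
\|\sigma^{-1} G_h\|_{L^2(\Gamma_h(s))} \leq c|\log h|.
\end{equation*}

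To prove the weighted bound, I would follow the pattern of Lemma~\ref{lemma:weighted_l_two_proj_err} and test the equation $a_h^\ast(G_h,\varphi_h) = \varphi_h(x)$ with a carefully chosen finite element function built from an $L^2$-projection, of the form $\varphi_h = \Lmap(s)(\sigma^{-2} G_h)$ (or an equivalent variant). The defining property of the $L^2$-projection immediately gives $m_h(G_h,\varphi_h) = m_h(G_h,\sigma^{-2}G_h) = \|\sigma^{-1} G_h\|_{L^2}^2$, so the mass part of $a_h^\ast(G_h,\varphi_h)$ contributes exactly the norm we want to control. The stiffness part is rewritten by splitting $\varphi_h = \sigma^{-2}G_h + (\varphi_h - \sigma^{-2}G_h)$: the remainder is handled by the weighted projection error bound of Lemma~\ref{lemma:weighted_l_two_proj_err}, while the principal term is expanded by the Leibniz rule using the pointwise bounds on $\sigma$ from Lemma~\ref{lemma:rho_calculation} (in particular $|\nabla \sigma| \leq 1$, hence $|\nabla \sigma^{-2}| \leq c\sigma^{-3}$). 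An $\varepsilon$-Young inequality then absorbs the gradient terms on the left.

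The right-hand side $\varphi_h(x) = \Lmap(s)(\sigma^{-2} G_h)(x)$ is then controlled using the \emph{exponential decay} of the $L^2$-projection from Theorem~\ref{theorem:old_L2_bound}~\eqref{eq:14}, which localizes the contribution to an $O(h)$ neighborhood of $x$ and is combined with the inverse inequality (Lemma~\ref{lemma:inverse_estimate}) and the bound $\|\sigma \delta_h^{s,x}\|_{L^2}\leq c$. The logarithmic factor itself originates from the elementary two-dimensional integral $\int_{\Gamma_h(s)} \sigma^{-2} \leq c|\log h|$, obtained by integrating $\sigma^{-2} = (|x-y|^2 + h^2)^{-1}$ in geodesic polar coordinates, using tools analogous to those in the Appendix.

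The main obstacle is obtaining \emph{exactly} the first power of $|\log h|$ rather than a higher logarithmic power: a naive use of the discrete Sobolev-Nirenberg inequality from the proof of Lemma~\ref{lemma:wt_eigenvalues} combined with the identity $\|G_h\|_{a_h^\ast}^2 = G_h(x)$ yields only $G_h(x) \leq c|\log h|^2$, so it is essential to use the \emph{exponential} decay of the $L^2$-projection (not merely its $L^2$-stability) and the weighted Cauchy--Schwarz tightly. Uniformity of all constants in $t \in [0,T]$ is routine from the smooth-evolution assumption on $\Phi$, and the transition between $\Gamma_h(s)$ and $\Gamma(s)$ is handled via \eqref{eq:66} and the bilinear form geometric errors of Lemma~\ref{lemma: geometric errors of forms - Holder}.
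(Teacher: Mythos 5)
Your opening observation is exactly the right one, but you then abandon it: the identity $a_h^{*}(s;G_h,G_h)=m_h(s;\delta_h^{s,x},G_h)=G_h(x)$, i.e.\ $\Hnorm[\surface_h(s)]{G_h}^2= G_h(x)$ (up to equivalence of $a_h^*$ with the $H^1$-norm), is the whole proof once it is combined with the \emph{sharp} discrete Sobolev inequality of Lemma~\ref{lemma:improved_inverse_estimate}, whose constant is $c\abs{\log h}^{1/2}$, not $c\abs{\log h}$. The paper's argument is just: $G_h(x)\le \Lnorm[\surface_h(s)][\infty]{G_h}\le c\abs{\log h}^{1/2}\Hnorm[\surface_h(s)]{G_h}=c\abs{\log h}^{1/2}\sqrt{G_h(x)}$, and dividing by $\sqrt{G_h(x)}$ gives the claim. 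You dismiss this route because you only invoke the Sobolev--Nirenberg inequality $\Lnorm[][q]{f}\le cq\Hnorm{f}$ from the proof of Lemma~\ref{lemma:wt_eigenvalues}, which indeed loses a full power of the logarithm; the point of Lemma~\ref{lemma:improved_inverse_estimate} (proved via the Green's function representation \eqref{eq:8} and geodesic polar coordinates) is precisely that the discrete Sobolev constant in two dimensions is $\abs{\log h}^{1/2}$.

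The alternative route you propose instead does not close. Reducing to $\Lnorm[\surface_h(s)]{\pwt^{-1}G_h}\le c\abs{\log h}$ discards the quadratic structure of the energy identity, and the weighted bound you then need is where all the difficulty reappears. Concretely: (i) Lemma~\ref{lemma:weighted_l_two_proj_err} controls the projection error for the weight $\pwt^{2}$, not $\pwt^{-2}$; for the negative power the second derivatives of $\pwt^{-2}\phi_h$ scale like $\pwt^{-4}$, so the putative error term $h^{2}\pwt^{-4}$ is of the \emph{same} order as the principal weight $\pwt^{-2}$ near $x$ (where $\pwt\sim h$) and is not a perturbation. (ii) The Leibniz expansion of $a_h(G_h,\pwt^{-2}G_h)$ produces $\int \pwt^{-4}G_h^{2}$, which exceeds $\Lnorm{\pwt^{-1}G_h}^{2}$ by a factor up to $h^{-2}$ and cannot be absorbed; this is exactly why Nitsche-type arguments use $\mu$ with $\rho^{2}=\gamma h^{2}\abs{\log h}$ and large $\gamma$, a cushion the weight \eqref{eq:10} does not have. (iii) The right-hand side $\Lproj(\pwt^{-2}G_h)(x)$ is of size $\sim h^{-2}G_h(x)$, since the singular part of $\pwt^{-2}G_h$ is concentrated \emph{at} $x$, so the exponential decay \eqref{eq:14} gives no help there. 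I recommend you return to your first identity and pair it with Lemma~\ref{lemma:improved_inverse_estimate}.
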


\begin{proof}
  Using Lemma~\ref{lemma:improved_inverse_estimate} with \eqref{eq:66} we
  estimate as
  \begin{align*}
    \Lnorm[\surface_{h}(s)][\infty]{G^{s,x}_{h}}
    & \leq c \abs{\log h}^{1/2}
      \Hnorm[\surface_{h}(s)]{G^{s,x}_{h}}  %\\ &
    = c \abs{\log h}^{1/2} \sqrt{G^{s,x}_{h}(x)}.
  \end{align*}
\end{proof}

The next lemma needs a different treatment then the one presented in Schatz,
Thom\'{e}e and Wahlbin \cite{1980_stw}.  The reason for that is that the mass
and stiffness matrix depend on time and further the stiffness matrix is
singular.
% We cope the first problem with
% Lemma~\ref{lemma:discrete_transport_prperty}~\eqref{eq:60} and for the second
% problem we consider $(A+M)^{-1}$ instead of $A^{-1}$.

\begin{lemma}
  \label{lemma:adjoint_problem_estimate}
  Let be $u_{h}$ a solution of \eqref{eq:22}.  Then we have the estimate
  \[
  \int_{0}^{t}\Lnorm[\surface_{h}(s)]{u_{h}}^{2} \mathrm{d}s \leq c \cdot
  m_{h}(t; T_{h}^{*,t}u_{h}, u_{h}).
  \]
\end{lemma}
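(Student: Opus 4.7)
The plan is to run an energy-type argument based on the auxiliary function
$\psi_h(s) := T_h^{*,s} u_h(s) \in S_h(s)$, well-defined for each $s\in [0,t]$ since $a_h^*$ is coercive. From the definition of $T_h^{*,s}$ one has $a_h^*(s; \psi_h(s), \varphi_h) = m_h(s; u_h(s), \varphi_h)$ for all $\varphi_h \in S_h(s)$. Choosing $\varphi_h = u_h$ gives the pointwise identity $\|u_h(s)\|_{L^2(\Gamma_h(s))}^2 = a_h^*(s;\psi_h, u_h)$, while at $s=t$ the choice $\varphi_h = \psi_h$ together with symmetry of $m_h$ identifies the right-hand side of the lemma as the scalar energy $F(t)$, where $F(s):= a_h^*(s; \psi_h(s), \psi_h(s))$. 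I therefore aim at a differential inequality of the form $F'(s) + C F(s) \gtrsim \|u_h(s)\|^2$, from which a single Gronwall step delivers the claim.

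The heart of the matter is to compute $\partial^\bullet_h \psi_h$ even though $T_h^{*,s}$ itself depends on $s$. I circumvent direct differentiation of $T_h^{*,s}$ by differentiating the defining variational relation $a_h^*(s;\psi_h,\chi_k) = m_h(s;u_h,\chi_k)$ against a basis function $\chi_k$, which by \eqref{eq: transport property} satisfies $\partial^\bullet_h \chi_k = 0$. Applying the discrete transport formulae of Lemma~\ref{lemma: transport properties} to $a_h^* = a_h + m_h$ and $m_h$, and substituting the adjoint equation from \eqref{eq:22} in the form $m_h(\partial^\bullet_h u_h, \chi_k) = a_h(u_h, \chi_k)$, I obtain by linearity that for every $\varphi_h \in S_h(s)$
\[
a_h^*(s; \partial^\bullet_h \psi_h, \varphi_h) = a_h(u_h, \varphi_h) + g_h(V_h; u_h, \varphi_h) - (g_h+b_h)(V_h; \psi_h, \varphi_h).
\]
Choosing $\varphi_h=\psi_h$, plugging into $F'(s) = 2 a_h^*(\partial^\bullet_h \psi_h, \psi_h) + (g_h+b_h)(V_h; \psi_h, \psi_h)$, and rewriting $a_h(u_h, \psi_h) = a_h^*(\psi_h,u_h) - m_h(\psi_h, u_h) = \|u_h\|^2 - m_h(\psi_h, u_h)$, this mirroring of the derivation of the ODE system \eqref{eq: ODE system} produces
\[
F'(s) = 2\|u_h\|^2 - 2m_h(\psi_h, u_h) + 2 g_h(V_h; u_h, \psi_h) - (g_h+b_h)(V_h; \psi_h, \psi_h).
\]

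To conclude, the three remainder terms are absorbed as follows: $\varepsilon$-Young on the two mixed terms, together with boundedness of $V_h$ and $\nabla_{\Gamma_h}\cdot V_h$ (cf.\ \eqref{eq: velocity L infty bound} and the regularity of $\Phi_h$), combined with the elementary bounds $\|\psi_h\|^2 \le F(s)$ and $|(g_h+b_h)(V_h;\psi_h,\psi_h)| \le C F(s)$, yield after absorbing $\tfrac12\|u_h\|^2$ into the left-hand side
\[
F'(s) + C F(s) \ge \tfrac{3}{2}\|u_h(s)\|^2_{L^2(\Gamma_h(s))}.
\]
Multiplying by $e^{Cs}$, integrating on $[0,t]$, and using $F(0)\ge 0$ together with $F(t) = m_h(t; T_h^{*,t} u_h, u_h)$ gives the stated inequality with $c = \tfrac23 e^{CT}$. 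The only genuinely delicate step is the differentiation of $\psi_h(s)$; the basis-function transport trick above is exactly the analogue of the device used to derive \eqref{eq: ODE system} for the forward problem, and is what makes the time-dependence of $T_h^{*,s}$ tractable here.
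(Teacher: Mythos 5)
Your proposal is correct and is essentially the paper's own argument in variational rather than matrix notation: your energy $F(s)=\aast_{h}(s;\psi_{h},\psi_{h})=m_{h}(s;T_{h}^{*,s}u_{h},u_{h})$ is exactly the quadratic form $\bm{u}^{T}M(A+M)^{-1}M\bm{u}$ whose $s$-derivative the paper computes, and your transport-lemma differentiation of the defining relation of $T_{h}^{*,s}$ (via $\dellmat_{h}\chi_{k}=0$) is the variational counterpart of the paper's differentiation of $M(A+M)^{-1}M$ combined with the bounds \eqref{eq:34}. The concluding Gronwall step coincides with the paper's use of Lemma~\ref{lemma:modfied_gronwall}.
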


\begin{proof}
  Note that Lemma~\ref{lemma: transport properties}~\eqref{eq: discrete transport properties} reads
  with the matrix notation as follows: If $\bm{Z}_{h}$ and $\bm{\phi}_{h}$ are
  the coefficient vectors of some finite element function, then we have the
  estimate
  \begin{equation}
    \label{eq:34}
    \begin{split}
      \bm{Z}_{h}^{T} \ddti{M(s)}{s} \bm{\phi}_{h}
      \leq
      % \Lnorm[\surface_{h}(t)][\infty]{\nbgh\cdot V_{h}}
      c \sqrt{\bm{Z}_{h}^{T} M(s)
        \bm{Z}_{h}} \sqrt{\bm{\phi}_{h}^{T} M(s) \bm{\phi}_{h}}, \\
      \bm{Z}_{h}^{T} \ddti{A(s)}{s} \bm{\phi}_{h}
      \leq
      % \Wnorm[\surface_{h}(t)][1,\infty]{V_{h}}
      c\sqrt{\bm{Z}_{h}^{T} A(s) \bm{Z}_{h}}
      \sqrt{\bm{\phi}_{h}^{T} A(s) \bm{\phi}_{h}}.
    \end{split}
  \end{equation}
  In the following with drop the $s$ dependency.  Let $\bm{u}$ be the time
  dependent coefficient vector of $u_{h}$.  Then we have
  \begin{equation*}
    % \label{eq:23}
    0 = - M \ddti{\bm{u}}{s}  + A \bm{u} = - M \ddti{\bm{u}}{s}  + (A+M) \bm{u} -
    M\bm{u}.
  \end{equation*}
  Equivalently we write this equation as
  \begin{multline*}
    -\frac{1}{2} \ddti{}{s}\bigl[ \bm{u}^{T} M(A+M)^{-1}M \bm{u}\bigr] \\
    =  - \bm{u}^{T} M \bm{u} + \bm{u}^{T}M (A +M)^{-1} M \bm{u}
    - \frac{1}{2} \bm{u}^{T} \ddti{}{s}\bigl[M (A+M)^{-1} M\bigr] \bm{u}.
  \end{multline*}
  The last term expanded reads
  \begin{multline*}
    \frac{1}{2}\bm{u}^{T} \ddti{}{s}\bigl[M (A+M)^{-1} M\bigr] \bm{u} \\
    = \bm{u}^{T} \ddti{M}{s} (A+M)^{-1} M \bm{u}
    + \frac{1}{2} \bm{u}^{T} M \ddti{(A+M)^{-1}}{s} M \bm{u}
    = I_{1}+ I_{2}.
  \end{multline*}
  Using \eqref{eq:34} and a Young inequality we
  estimate as
  \begin{align*}
    \abs{I_{1}}
    & \leq c \cdot % \Wnorm[\surface_{h}(t)][1,\infty]{V_{h}}^{2}
      \bm{u}^{T} M (A+M)^{-1} M \bm{u}
      + \frac{1}{2} \bm{u}^{T} M \bm{u}.  \\
    \abs{I_{2}}
    & = \frac{1}{2}\biggl\lvert
      \bm{u}^{T} M (A+M)^{-1} \ddti{(A+M)}{s} (A+M)^{-1} M
      \bm{u} \biggr\rvert \\
    & \leq c \cdot % \Wnorm[\surface_{h}(t)][1,\infty]{V_{h}}
      \bm{u}^{T} M(A+M)^{-1} M \bm{u}
  \end{align*}
  Putting everything together we reach at
  \begin{equation*}
    -\ddti{}{s}\bigl[ \bm{u}^{T} M(A+M)^{-1} M \bm{u} \bigr]
    \leq  - \bm{u}^{T} M \bm{u} + c \cdot \bm{u}^{T}M(A+M)^{-1} M \bm{u}.
  \end{equation*}
  The claim then follows from Lemma~\ref{lemma:modfied_gronwall}.
\end{proof}

\subsection{A weak discrete maximum principle}

%\rev{The authors' reference [25] (Schatz, Thom\'{e}e and Wahlbin) was updated in [Schatz, Thom\'{e}e \& Wahlbin (1998)]. There the Euclidean version of the weak discrete maximum principle (Proposition 5.2) is shown to be logarithm-free.}{
\begin{proposition}
  \label{proposition:weak_max}
  Let $U_{h}(x,t)\in S_{h}(t)$ the ESFEM solution of our linear heat problem.
  Then there exists a constant $c=c(T,v)>0$, which depends exponentially on $T$
  and $v$ such that
  \[
  \Lnorm[\surface_{h}(t)][\infty]{U_{h}(t)} \leq c \abs{\log h}
  \Lnorm[\surface_{h}(0)][\infty]{U_{h}(0)}.
  \]
\end{proposition}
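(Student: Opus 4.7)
The plan is to reduce the sup-norm stability of $U_h(t)$ to an $L^{1}$ bound on the trajectory of the discrete adjoint problem, using the weighted $L^{2}$-duality machinery assembled above. First I would choose $x_h\in\Ga_h(t)$ with $|U_h(x_h,t)|=\Lnorm[\Ga_h(t)][\infty]{U_h(t)}$ and combine the discrete delta identity \eqref{eq:15} with the adjointness relation \eqref{eq:21} to write
\begin{equation*}
U_h(x_h,t)
=m_h\bigl(t;\delta_h^{t,x_h},U_h(t)\bigr)
=m_h\bigl(0;\psi_h(0),U_h(0)\bigr),
\end{equation*}
where $\psi_h(s):=E_h(t,s)^{\ast}\delta_h^{t,x_h}\in S_h(s)$ solves the backward adjoint problem \eqref{eq:22} with terminal data $\psi_h(t)=\delta_h^{t,x_h}$. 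A H\"older inequality then reduces the claim to the $L^{1}$ bound $\|\psi_h(0)\|_{L^1(\Ga_h(0))}\leq c\abs{\log h}$.

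To obtain this bound I would introduce the weight $\pwt(\cdot,s)=\pwt^{y(s)}$ from \eqref{eq:10}, centered at the material trajectory $y(s)$ through the point $x_h$. A weighted Cauchy--Schwarz gives
\begin{equation*}
\|\psi_h(s)\|_{L^1(\Ga_h(s))}
\leq \|\pwt^{-1}\|_{L^2(\Ga_h(s))}\,\|\pwt\psi_h(s)\|_{L^2(\Ga_h(s))},
\end{equation*}
and an elementary integration in geodesic polar coordinates on the 2-surface $\Ga_h(s)$ yields $\|\pwt^{-1}\|_{L^2(\Ga_h(s))}\leq c\abs{\log h}^{1/2}$. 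The problem therefore reduces to showing that $\|\pwt\psi_h(s)\|_{L^2(\Ga_h(s))}$ stays of order $\abs{\log h}^{1/2}$ uniformly in $s\in[0,t]$, with the terminal bound $\|\pwt\psi_h(t)\|_{L^2(\Ga_h(t))}\leq c$ being exactly Lemma~\ref{lemma:delta_rho_bound}.

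The core of the proof is a backward weighted energy estimate. I would differentiate $m_h\bigl(s;\pwt^{2}\psi_h,\psi_h\bigr)=\|\pwt\psi_h(s)\|_{L^2(\Ga_h(s))}^{2}$ in $s$ using Lemma~\ref{lemma: transport properties}, and, since $\pwt^{2}\psi_h\notin S_h(s)$, insert its $L^{2}$-projection $\xi_h:=\Lproj(\pwt^{2}\psi_h)\in S_h(s)$ so that the adjoint equation \eqref{eq:22} can be tested against $\xi_h$. In the resulting identity, the projection defect $\pwt^{2}\psi_h-\xi_h$ is controlled by Lemma~\ref{lemma:weighted_l_two_proj_err}, the weight derivatives $\mat\pwt^{2}$ and $\nbg\pwt^{2}$ by Lemma~\ref{lemma:rho_calculation}, and the principal elliptic contribution $a_h(s;\psi_h,\pwt^{2}\psi_h)$ splits via a discrete product rule into the nonnegative dissipation $\int_{\Ga_h(s)}\pwt^{2}|\nbgh\psi_h|^{2}$ plus cross terms absorbable by $\varepsilon$-Young. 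A backward Gronwall applied from $s=t$ to $s=0$, combined with the integrated-$L^{2}$ bound from Lemma~\ref{lemma:adjoint_problem_estimate} to absorb lower-order perturbations, then yields the target weighted $L^{2}$ estimate with a constant $c=c(T,v)$ depending exponentially on $T$ and $v$.

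The hard part, and where the argument genuinely departs from the Euclidean treatment in \cite{1980_stw}, will be the bookkeeping of the commutator terms generated when the time-dependent weight $\pwt^{y(s)}$, the evolving-mesh transport formulas, and the $L^{2}$-projection defect are combined: each perturbation must stay of order compatible with $\|\pwt\psi_h(s)\|_{L^2}\leq c\abs{\log h}^{1/2}$ uniformly in $s\in[0,t]$, despite the lack of a semigroup structure for the evolving-surface heat flow. The $|\log h|$ factor in the final statement will either trace solely to $\|\pwt^{-1}\|_{L^2}$ or split as $\abs{\log h}^{1/2}\cdot\abs{\log h}^{1/2}$ through an additional logarithmic growth of $\|\pwt\psi_h(s)\|_{L^2}$ along the backward flow; Lemmas~\ref{lemma:weighted_l_two_proj_err}, \ref{lemma:rho_calculation}, and \ref{lemma:adjoint_problem_estimate} are precisely the tools engineered to absorb these commutators.
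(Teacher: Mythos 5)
Your proposal follows essentially the same route as the paper: the duality step through the discrete delta and the adjoint evolution operator, the splitting $\|\psi_h\|_{L^1}\leq\|\pwt^{-1}\|_{L^2}\|\pwt\psi_h\|_{L^2}$, and the backward weighted energy estimate with the $L^{2}$-projection $\Lproj(\pwt^{2}\psi_h)$ inserted so the adjoint equation can be tested, closed by the modified Gronwall inequality together with Lemmas~\ref{lemma:weighted_l_two_proj_err}, \ref{lemma:rho_calculation}, \ref{lemma:adjoint_problem_estimate} and \ref{lemma:delta_rho_bound}. The only point you leave open — where the second $\abs{\log h}^{1/2}$ enters — is resolved in the paper by bounding $\int_{0}^{t}\Lnorm[\surface_{h}(s)]{\psi_h(s)}^{2}\,\mathrm{d}s$ through Lemma~\ref{lemma:adjoint_problem_estimate} combined with the discrete Green's function estimate $G_h^{s,x}(x)\leq c\abs{\log h}$ of Lemma~\ref{lemma:discrete_greens_function}, so the factor indeed splits as $\abs{\log h}^{1/2}\cdot\abs{\log h}^{1/2}$.
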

%}

\begin{proof}
  There exists \(x_{h}\in \surface_{h}(t)\) such that
  \begin{multline*}
    \Lnorm[][\infty]{U_{h}(t)}
    = \abs{U_{h}(x_{h},t)}
    = m_{h}\bigl(t; U_{h}(t), \delta_{h}^{t,x_{h}}\bigr)
    = m_{h}\bigl(t; E(t,0)U_{h}^{0}, \delta_{h}^{t,x_{h}}\bigr) \\
    = m_{h}(0; U_{h}^{0}, E(t,0)^{*} \delta_{h}^{t,x_{h}})
    \leq \Lnorm[][\infty]{U_{h}^{0}}
    \Lnorm[][1]{E(t,0)^{*} \delta_{h}^{t,x_{h}}}.
  \end{multline*}
  The claim follows from Lemma~\ref{lemma:technical_weak_max}.
\end{proof}

\begin{lemma}
  \label{lemma:technical_weak_max}
  For $G^{x}_{h}(t,s):= E_{h}(t,s)^{*}\delta^{t,x}_{h}$, where $\delta^{t,x}_{h}$ is
  defined via \eqref{eq:15} and $E_{h}(t,s)^{*}$ is defined via \eqref{eq:21},
  it holds
  \begin{equation*}
    \Lnorm[\surface_{h}(0)][1]{G^{x}(t,0)} \leq c \abs{\log h},
  \end{equation*}
  where the constant $c=c(T,v)$ depending exponentially on $T$ and $v$ such
  and is independent of \(x\), \(h\), \(t\) and \(s\).
\end{lemma}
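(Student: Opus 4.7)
The plan is a weighted duality argument modeled on Schatz, Thom\'{e}e and Wahlbin~\cite{1980_stw}, using the weight $\pwt$ from~\eqref{eq:10} together with the adjoint machinery already developed in this section. Fix the point $x \in \surface_h(t)$ at which the discrete delta is centered, let $y(s)\in \surface_h(s)$ be the backward trajectory of $x$ under the discrete flow $\Phi_h$ (so $y(t)=x$), and set $\pwt(s,\cdot):=\pwt^{y(s)}$ and $G(s):= E_h(t,s)^{*}\delta_h^{t,x}\in S_h(s)$. The two-dimensional polar-coordinate calculation $\int_{\surface_h(s)}\pwt^{-2}\leq c|\log h|$ combined with Cauchy--Schwarz gives
\begin{equation*}
  \Lnorm[\surface_h(0)][1]{G(0)} \leq \Lnorm[\surface_h(0)]{\pwt^{-1}(0)}\,\Lnorm[\surface_h(0)]{\pwt(0)G(0)} \leq c|\log h|^{1/2}\Lnorm[\surface_h(0)]{\pwt(0)G(0)},
\end{equation*}
so the problem reduces to showing $\Lnorm[\surface_h(0)]{\pwt(0)G(0)}\leq c|\log h|^{1/2}$.

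The key step is a weighted energy estimate of the form
\begin{equation*}
  E(0) \leq c\, E(t) + c\int_0^t \Lnorm[\surface_h(s)]{G(s)}^2 \, ds, \qquad E(s):=\Lnorm[\surface_h(s)]{\pwt(s)G(s)}^2,
\end{equation*}
which I would prove by differentiating $E$ in $s$ via the surface transport theorem and testing the adjoint equation~\eqref{eq:22} with $\psi_h := \Lproj(\pwt^2 G)\in S_h(s)$. The leading contribution is the coercive term $\Lnorm[\surface_h(s)]{\pwt\nbgh G}^2$; the cross term from $\nbgh(\pwt^2)$ is controlled by $|\nbgh \pwt|\leq 1$ and Young's inequality; the geometric corrections coming from $\dellmat_h \pwt$ and $\nbgh\cdot V_h$ are lower-order by Lemma~\ref{lemma:rho_calculation}; and the projection errors $a_h(G, \psi_h-\pwt^2G)$ and $m_h(\dellmat_h G, \pwt^2G-\psi_h)$ are handled by combining Lemma~\ref{lemma:weighted_l_two_proj_err} with the inverse estimate of Lemma~\ref{lemma:inverse_estimate} applied to $\nbgh G$ and, via the equation itself, to $\dellmat_h G$. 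All these perturbations bound $E'(s)$ from below by $-c\Lnorm{G(s)}^2-cE(s)$ (after absorbing a fraction of the coercive term), and a Gr\"onwall argument applied backward in $s$ (via $\tau=t-s$) delivers the displayed estimate, with a constant depending exponentially on $T$ and on $\Linfty{v}$.

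To conclude, the final data satisfies $E(t)=\Lnorm[\surface_h(t)]{\pwt(t)\delta^{t,x}_h}^2\leq c$ by Lemma~\ref{lemma:delta_rho_bound}, while Lemma~\ref{lemma:adjoint_problem_estimate} applied to $u_h=G$ gives
\begin{equation*}
  \int_0^t \Lnorm[\surface_h(s)]{G(s)}^2 \, ds \leq c\, m_h\bigl(t; T_h^{*,t}\delta^{t,x}_h, \delta^{t,x}_h\bigr) = c\, G^{t,x}_h(x) \leq c|\log h|
\end{equation*}
by Lemma~\ref{lemma:discrete_greens_function}. Hence $\Lnorm[\surface_h(0)]{\pwt(0)G(0)}^2 \leq c|\log h|$, and the opening Cauchy--Schwarz step yields $\Lnorm[\surface_h(0)][1]{G^x(t,0)} \leq c|\log h|$ as required.

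The main obstacle is the weighted energy estimate. Since $\pwt^2 G \notin S_h$, it must be replaced by $\Lproj(\pwt^2 G)$, and the resulting error terms involve $\nbgh G$ and $\dellmat_h G$, which by inverse estimates alone would be of order $h^{-1}$ and $h^{-2}$; the $h^2$ factors on the right of Lemma~\ref{lemma:weighted_l_two_proj_err} are precisely what is needed to absorb these into $\Lnorm{G}^2 + \varepsilon\Lnorm{\pwt\nbgh G}^2$. The time-dependent corrections $\dellmat_h \pwt$ and $\nbgh\cdot V_h$, which are absent in the stationary Euclidean setting of~\cite{1980_stw}, are treated as lower-order perturbations via Lemma~\ref{lemma:rho_calculation} and contribute only an exponential Gr\"onwall constant in $T$ and $v$.
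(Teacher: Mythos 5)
Your proposal is correct and follows essentially the same route as the paper's proof: the Cauchy--Schwarz splitting with the weight $\pwt$, the weighted energy identity obtained by testing the adjoint equation with $\Lproj(\pwt^{2}G)$, the modified Gr\"onwall argument, and the final appeal to Lemma~\ref{lemma:adjoint_problem_estimate}, Lemma~\ref{lemma:discrete_greens_function} and Lemma~\ref{lemma:delta_rho_bound}. The only cosmetic difference is that the paper uses the $L^{2}$-orthogonality to make the term $m_{h}(\partial^{\bullet,s}_{h}G,\pwt^{2}G-\psi_{h})$ vanish outright, whereas you bound it via the equation and inverse estimates, which also works.
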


\begin{proof}
  The proof presented here is a modification of the proof from Schatz,
  Thom\'{e}e and Wahlbin \cite[Lemma~2.1]{1980_stw}.  We estimate
  \begin{equation*}
    \Lnorm[\surface_{h}(0)][1]{G_{h}^{x}(t,0)} \leq
    \Lnorm[\surface_{h}(0)]{1/\pwt^{x}}
    \Lnorm[\surface_{h}(0)]{\pwt^{x} G_{h}^{x}(t,0)}.
  \end{equation*}
  With subsection~\ref{sec:integr-with-polar} it follows
  \begin{equation*}
    \Lnorm[\surface_{h}(0)]{1/\pwt^{x}}^{2} \leq c \abs{\log h}.
  \end{equation*}
  It remains to show
  \begin{equation*}
    \Lnorm[\surface_{h}(0)]{\pwt^{x} G_{h}^{x}(t,0)}^{2} \leq c \abs{\log h}.
  \end{equation*}
  In the following we abbreviate $\pwt= \pwt^{x}$ and $G_{h}=G_{h}^{x}(t,s)$ With
  equation~\eqref{eq:22} and the discrete transport property we proceed as
  follows
  \begin{align*}
    - \frac{1}{2} \ddti{}{s}
    & \Lnorm[\surface_{h}(s)]{\pwt G_{h}}^{2} + \Lnorm[\surface_{h}(s)]{\pwt \nbgh
      G_{h}}^{2}\\
    = {}
    & - m_{h}\bigl(s; \partial^{\bullet,s}_{h} G_{h}, \pwt^{2}G_{h}\bigr) +
    a_{h}\bigl(s; G_{h}, \pwt^{2}G_{h}\bigr) \\
    &- 2m_{h}\bigl(s; \pwt\nbgh
      G_{h}, G_{h} \nbgh \pwt\bigr)   \\
    &- m_{h}\bigl(s;\partial^{\bullet,s}_{h} \pwt, \pwt G_{h}^{2}\bigr)
      - \frac{1}{2}m_{h}\bigl(s;\pwt^{2}G_{h}^{2}, \nbgh\cdot V_{h}\bigr) \\
    = {}
    & - m_{h}\bigl(s; \partial^{\bullet,s}_{h} G_{h}, \pwt^{2}G_{h}-\psi_{h}\bigr) +
      a_{h}\bigl(s; G_{h}, \pwt^{2}G_{h}-\psi_{h}\bigr) \\
    &- 2m_{h}\bigl(s; \pwt\nbgh
      G_{h}, G_{h} \nbgh \pwt\bigr)   \\
    &- m_{h}\bigl(s;G_{h}\partial^{\bullet,s}_{h} \pwt, \pwt G_{h}\bigr)
      - \frac{1}{2}m_{h}\bigl(s;\pwt^{2}G_{h}^{2}, \nbgh\cdot V_{h}\bigr) \\
    = {}
    & I_{1}+ I_{2}+I_{3}+I_{4}+I_{5}.
  \end{align*}
  For the choice $\psi_{h} = \Lproj (\pwt^{2}G_{h}\bigr)$ we have $I_{1}=0$.
  Using Cauchy--Schwarz inequality, Lemma~\ref{lemma:weighted_l_two_proj_err} and an
  inverse estimate~\ref{lemma:inverse_estimate} we get
  \begin{align*}
    \abs{I_{2}} \leq {}
    % ------------------------------------------------------------
    % Details
    % & \Hnorm{G_{h}} \Hnorm{\pwt G_{h} - \Lproj(\pwt G_{h})}\\
    % \leq {}
    % & ch \Hnorm{G_{h}} ( \Lnorm{G_{h}} + \Lnorm{\pwt \nbgh G_{h}}) \\
    % \leq {}
    % & c \Lnorm{G_{h}} ( \Lnorm{G_{h}} + \Lnorm{\pwt \nbgh G_{h}}) \\
    % \leq {}
    % ------------------------------------------------------------
    & c \bigl( \Lnorm[\surface_{h}(s)]{G_{h}}^{2} +
      \Lnorm[\surface_{h}(s)]{G_{h}} \cdot \Lnorm[\surface_{h}(s)]{\pwt
      \nbgh G_{h}}\bigr).
  \end{align*}
  Using Lemma~\ref{lemma:rho_calculation}~\eqref{eq:68} we reach at
  \begin{align*}
    \abs{I_{3}} \leq {}
    % ------------------------------------------------------------
    % Details
    % & \Lnorm{\pwt \nbgh G_{h}} \Lnorm{G_{h} \nbgh \sigma} \\
    % \leq {}
    % ------------------------------------------------------------
    & c \Lnorm[\surface_{h}(s)]{G_{h}} \Lnorm[\surface_{h}(s)]{\pwt
      \nbgh G_{h}}.
  \end{align*}
  Using Lemma~\ref{lemma:rho_calculation}~\eqref{eq:67} we have
  \begin{align*}
    \abs{I_{4}} \leq {}
    % & \Lnorm[\surface_{h}(s)][\infty]{V_{h}}
    %   \Lnorm[\surface_{h}(s)]{G_{h}} \Lnorm[\surface_{h}(s)]{\pwt
    %   G_{h}}, \\
    & c \Lnorm[\surface_{h}(s)]{G_{h}}
        \Lnorm[\surface_{h}(s)]{\pwt G_{h}}, \\
    \abs{I_{5}} \leq {}
    & % \Lnorm[\surface_{h}(s)][\infty]{\nbgh \cdot V_{h}}
      c \Lnorm[\surface_{h}(s)]{\pwt G_{h}}^{2}.
  \end{align*}
  After a Young inequality we have
  \begin{multline*}
    - \ddti{}{s} \Lnorm[\surface_{h}(s)]{\pwt G_{h}}^{2}
    + \Lnorm[\surface_{h}(s)]{\pwt \nbgh G_{h}}^{2} %\\
    \leq c \Lnorm[\surface_{h}(s)]{G_{h}}^{2}
    +c %\Wnorm[\surface_{h}(s)][1,\infty]{V_{h}}
    \Lnorm[\surface_{h}(s)]{\pwt G_{h}}^{2}.
  \end{multline*}
  Lemma~\ref{lemma:modfied_gronwall} yields
  \begin{equation*}
    \Lnorm[\surface_{h}(0)]{\pwt G_{h}(t,0)}^{2}  \leq c
    \biggl( \int_{0}^{t}
    \Lnorm[\surface_{h}(s)]{G_{h}(t,s)}^{2} \mathrm{d}s  +
    \Lnorm[\surface_{h}(0)]{\pwt^{x} \delta^{x}_{h}} \biggl).
  \end{equation*}
  For the first term we get from Lemma~\ref{lemma:adjoint_problem_estimate} and
  Lemma~\ref{lemma:discrete_greens_function} the bound
  \begin{equation*}
    \int_{0}^{t} \Lnorm[\surface_{h}(s)]{G_{h}(t,s)}^{2} \mathrm{d}s
    \leq c \abs{\log h}.
  \end{equation*}
  The last term is bounded according to Lemma~\ref{lemma:delta_rho_bound}.
  % The last term is bounded according to Lemma~\ref{lemma:delta_rho_bound},
  % therefore it remains to show
  % Denote by $\bm{1}$ the constant finite element with value equals $1$.  Then we
  % have w.r. to the $L^{2}$ scalar product an orthogonal decomposition
  % \[
  % G_{h}(t,s) = G_{h}(t,s)^{\perp} + \biggl(\int_{\surface_{h}(s)} G_{h}(t,s)\biggr) \bm{1}.
  % \]
  % Hence we have
  % \begin{align*}
  %   \int_{0}^{t} &\Lnorm[\surface_{h}(s)]{G_{h}(t,s)}^{2} \mathrm{d}s \\
  %   &= \int_{0}^{t}\Lnorm[\surface_{h}(s)]{G_{h}(t,s)^{\perp}}^{2} \mathrm{d}s +
  %   \int_{0}^{t} \vol\bigl(\surface_{h}(s)\bigr) \biggl(\int_{\surface_{h}(s)}
  %   G_{h}(t,s)\biggr)^{2} \mathrm{d}s
  % \end{align*}
  % It is easy to see
  % \begin{align*}
  % \int_{0}^{t} &\vol\bigl(\surface_{h}(s)\bigr) \biggl(\int_{\surface_{h}(s)}
  %                G_{h}(t,s)\biggr)^{2} \mathrm{d}s \\
  %   &\leq \frac{1}{2}\max_{s\in
  %     [0,t]}\vol\bigl(\surface_{h}(s)\bigr) \vol\bigl(\G_{h}Th \bigr) + \frac{1}{2}
  %   \int_{0}^{t} \Lnorm[\surface_{h}(s)]{G_{h}(t,s)}^{2} \mathrm{d}s.
  % \end{align*}
  % Thus we have
  % \[
  % \int_{0}^{t}\Lnorm[\surface_{h}(s)]{G_{h}(t,s)}^{2} \mathrm{d}s \leq c + 2
  % \int_{0}^{t}\Lnorm[\surface_{h}(s)]{G_{h}(t,s)^{\perp}}^{2} \mathrm{d}s.
  % \]
  % This is a consequence of Lemma~\ref{lemma:adjoint_problem_estimate} and
  % Lemma~\ref{lemma:discrete_greens_function}.
\end{proof}

\begin{remark}
By using the techniques of \cite{1998_stw} instead of \cite{1980_stw} the logarithmic factor $|\log(h)|$ is expected to disappear, however this would lead to a much more technical and quite lengthy proof, as already noted in the introduction.
\end{remark}

\section{Convergence of the semidiscretization}
% \section{Maximum norm parabolic error estimates}
\label{sec:conv-semid}

\begin{theorem}
%\label{theorem:convergence_semidiscrete}
  Let $\surface(t)$ be an evolving surface, let $u\colon \surface(t)\to \R$
  be the solution of \eqref{eq_ES-PDE-strong-form} and let
  $u_{h}= U_{h}^{l}\in H^{1}\bigl(\surface(t)\bigr)$ be the solution
  of \eqref{eq: semidiscrete problem}.  If it holds
  \begin{align*}
    \Lnorm[\surface_{h}(t)][\infty]{\Rmap(t)u - U_{h}}
    \leq c h^{2},
  \end{align*}
  then there exists $h_{0}>0$ sufficiently small and $c=c(h_{0})>0$ independent
  of $t$, such that for all $0<h< h_{0}$ we have the estimate
  \begin{align*}
    \Lnorm[\surface(t)][\infty]{u- u_{h}}
    & + h\Wnorm[\surface(t)][1,\infty]{u - u_{h}} \\
    & \leq c h^{2} \abs{\log h}^{4} (1+t)
    \big(\Wnorm[\surface(t)][2,\infty]{u}
    + \Wnorm[\surface(t)][2,\infty]{\mat u}\big).
  \end{align*}
\end{theorem}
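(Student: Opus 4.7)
The plan is to split the error via the Ritz projection $\Rmap(t)$ and treat the two pieces separately. Writing $\theta := \Rmap(t)u - U_h \in S_h(t)$ we obtain $u - u_h = \bigl(u - \lRmap(t)u\bigr) + \theta^l$. The first piece is controlled directly by Theorem~\ref{theorem: L infty errors in Ritz}, giving $\Lnorm[\surface(t)][\infty]{u - \lRmap u} \leq c h^2 |\log h|^{3/2} \Wnorm[\surface(t)][2,\infty]{u}$ and analogously the $W^{1,\infty}$ bound $\Wnorm[\surface(t)][1,\infty]{u - \lRmap u} \leq ch|\log h|\Wnorm[\surface(t)][2,\infty]{u}$. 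The task therefore reduces to estimating $\theta$ in $L^\infty\bigl(\surface_h(t)\bigr)$; the corresponding tangential gradient bound on $\theta$ will follow by combining the standing hypothesis with the inverse estimate of Lemma~\ref{lemma:inverse_estimate}.

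I would first derive an error equation for $\theta$. From the weak form of \eqref{eq_ES-PDE-strong-form} tested with the lift $\phi_h^l$ of an arbitrary $\phi_h \in S_h(t)$ with $\mat_h \phi_h \in S_h(t)$, from the semidiscrete problem \eqref{eq: semidiscrete problem}, and from the defining identity \eqref{eq:39} of the Ritz map, a standard computation relying on the transport identities of Lemma~\ref{lemma: transport properties} produces
\begin{equation*}
\diff m_h(\theta,\phi_h) + a_h(\theta,\phi_h) - m_h(\theta,\mat_h\phi_h) = F(\phi_h),
\end{equation*}
where $F$ is a linear functional on $S_h(t)$ collecting (a) material-derivative Ritz defects driven by $\mat_h(u - \lRmap u)$, (b) the velocity discrepancy $(v-v_h)\cdot \nbg u$ from \eqref{eq:40}, and (c) geometric perturbations between the bilinear forms $(m,a,g,b)$ and $(m_h,a_h,g_h,b_h)$. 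Representing $F$ as $F(\phi_h)=m_h(r_h,\phi_h)$ with $r_h=\Lproj(t)R\in S_h(t)$ for a suitable continuous source $R$, the new maximum-norm estimate of Theorem~\ref{theorem: L infty errors in mat derivative of Ritz}, the velocity bound \eqref{eq: velocity L infty bound}, Lemma~\ref{lemma: geometric errors of forms - Holder} applied with $p=\infty, q=1$ and the $L^{\infty}$-stability of $\Lproj$ from Theorem~\ref{theorem:old_L2_bound} yield
\begin{equation*}
\Lnorm[\surface_h(t)][\infty]{r_h} \leq c h^2 |\log h|^3 \bigl(\Wnorm[\surface(t)][2,\infty]{u} + \Wnorm[\surface(t)][2,\infty]{\mat u}\bigr).
\end{equation*}

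The bound on $\theta$ is then obtained via Duhamel's principle and the weak discrete maximum principle. Since $\theta$ solves an inhomogeneous version of \eqref{eq: semidiscrete problem} with forcing $r_h$, duality with the adjoint operator $E_h(t,s)^{*}$ from Lemma~\ref{lemma:adjoint_problem} gives
\begin{equation*}
\theta(t) = E_h(t,0)\theta(0) + \int_0^t E_h(t,s) r_h(s)\,\mathrm{d}s.
\end{equation*}
The straightforward generalisation of Proposition~\ref{proposition:weak_max} to arbitrary starting time $s\in[0,t]$ reads $\Lnorm[\surface_h(t)][\infty]{E_h(t,s) v_h} \leq c|\log h|\Lnorm[\surface_h(s)][\infty]{v_h}$. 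Choosing $U_h^0$ so that $\Lnorm[\surface_h(0)][\infty]{\theta(0)} \leq c h^2 \Wnorm[\surface(0)][2,\infty]{u_0}$, this produces
\begin{equation*}
\Lnorm[\surface_h(t)][\infty]{\theta(t)} \leq c h^2 |\log h|^4 (1+t) M,\qquad M:=\Wnorm[\surface(t)][2,\infty]{u} + \Wnorm[\surface(t)][2,\infty]{\mat u}.
\end{equation*}
Invoking the standing hypothesis $\Lnorm[\surface_h(t)][\infty]{\theta}\leq ch^2$ together with Lemma~\ref{lemma:inverse_estimate} gives $\Lnorm[\surface_h(t)][\infty]{\nbgh \theta}\leq c h$; lifting by the gradient equivalence \eqref{eq:66} and combining with the Ritz estimates from the first paragraph closes the argument. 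The principal technical obstacle is the $L^\infty$ estimate of the residual $r_h$: it is precisely here that the novel material-derivative estimate of Theorem~\ref{theorem: L infty errors in mat derivative of Ritz} (contributing $|\log h|^3$) combines with the discrete maximum principle (contributing a further $|\log h|$) to produce the asserted $|\log h|^4(1+t)$ factor.
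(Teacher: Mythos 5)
Your proposal is correct and follows essentially the same route as the paper: the same splitting $u-u_h=(u-\lRmap u)+\theta_h^l$, the variation-of-constants representation of $\theta_h$ combined with the weak discrete maximum principle (Proposition~\ref{proposition:weak_max}), and the $L^\infty$ bound on the residual obtained by duality with $L^1$ together with the $L^p$-stability of the $L^2$-projection, the geometric form estimates, and the two maximum-norm Ritz-map theorems. The only cosmetic difference is that the paper obtains the $W^{1,\infty}$ part directly from the proven $L^\infty$ bound by an inverse inequality rather than from the standing hypothesis.
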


\begin{proof}
  It suffices to prove the $L^{\infty}$-estimate, since an inverse inequality
  implies the $W^{1,\infty}$-estimate.  \par
  For this proof we denote by $\Rmap u = \Rmap(t)u$, $\Rmapl u = (\Rmap u)^{l}$
  and $u_{h} = U_{h}^{l}$.  We split the error as follows
  \begin{align*}
    u - u_{h} = (u - \Rmapl u) + (\Rmap u - U_{h})^{l} = \sigma
    + \theta_{h}^{l}.
  \end{align*}
  Because of Theorem~\ref{theorem: L infty errors in Ritz} it remains to bound
  $\theta_{h}$.  Obviously there exists $R_{h} \in S_{h}(t)$ such that for all
  $\phi_{h}\in S_{h}(t)$ it holds
  \begin{equation*}
    \frac{\mathrm{d}}{\mathrm{d}t}
    \int_{\surface_{h}(t)} \theta_{h} \phi_{h}
    + \int_{\surface_{h}(t)} \nbgh \theta_{h} \cdot \nbgh \phi_{h}
    - \int_{\surface_{h}(t)} \theta_{h} \mat_{h} \phi_{h}
    = \int_{\surface_{h}(t)} R_{h} \phi_{h}.
  \end{equation*}
  By the variation of constant formula we deduce
  \begin{align*}
    \theta_{h}(t)
    = E_{h}(t,0) \theta_{h}(0) + \int_{0}^{t} E_{h}(t,s) R_{h}(s) \d s.
  \end{align*}
  With Proposition~\ref{proposition:weak_max} we get
  \begin{equation*}
    \Lnorm[\surface_{h}(t)][\infty]{\theta_{h}}
    \leq c\abs{\log h}\bigl(
      \Lnorm[\surface_{h}(t)]{\theta_{h}(0)}
      + t \max_{s\in [0,t]} \Lnorm[\surface_{h}(t)][\infty]{R_{h}(s)}
      \bigr).
  \end{equation*}
  Observe that if we denote by $\varphi_{h}:= \phi_{h}^{l}$, then a quick
  calculation reveals
  \begin{equation}
    \label{eq:54}
    \begin{aligned}
      m_{h}(R_{h},\phi_{h})
      % ------------------------------------------------------------
      % Details
      % & = \frac{\d }{\d t} m_{h}(\Rmap u, \phi_{h})
      % + a_{h}(\Rmap u, \phi_{h}) - m_{h}(\Rmap u, \mat_{h} u) \\
      % & \quad - \left( \frac{\d }{\d t} m(u, \varphi_{h})
      %   + a(u,\varphi_{h}) - m(u , \mat_{h} u)
      % \right) \\
      % ------------------------------------------------------------
      & = m_{h}(\mat_{h} \Rmap u , \phi_{h})
      + g_{h}(V_{h}; \Rmap u, \phi_{h})
      + a_{h}(\Rmap u, \phi_{h}) \\
      & \quad - \bigl( m ( \mat_{h} u ,\varphi_{h})
      + g(v_{h}; u, \varphi_{h})
      + a(u , \varphi_{h}) \bigr)
    \end{aligned}
  \end{equation}
  Lemma~\ref{lemma:linfty_stability_theta_residual} finishes the proof.
\end{proof}

\begin{lemma}
  \label{lemma:linfty_stability_theta_residual}
  Assume that $R_{h}\in S_{h}(t)$ satisfies for all $\phi_{h}\in S_{h}(t)$ with
  $\varphi_{h} := \phi_{h}^{l}$ equation \eqref{eq:54}.  Then it holds
  \begin{align*}
    \Lnorm[\surface_{h}(t)][\infty]{R_{h}}
    \leq c h^{2} \abs{\log h}^{3}
    (\Wnorm[\surface(t)][2,\infty]{u}
    + \Wnorm[\surface(t)][2,\infty]{\mat u}).
  \end{align*}
\end{lemma}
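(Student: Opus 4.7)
The plan is to bound $\|R_h\|_{L^\infty(\surface_h(t))}$ pointwise by testing the identity \eqref{eq:54} with a discrete delta function at the maximum point of $R_h$. Pick $x_h^\ast \in \surface_h(t)$ with $|R_h(x_h^\ast)| = \|R_h\|_{L^\infty}$ and set $\phi_h := \delta_h^{t,x_h^\ast}$, so that $R_h(x_h^\ast) = m_h(R_h,\phi_h)$ by \eqref{eq:15}. A preliminary step is to check that $\|\delta_h^{t,x_h^\ast}\|_{L^1(\surface_h(t))} \leq c$ uniformly in $h,t$; this can be extracted from the exponential decay property of the $L^2$-projection in Theorem~\ref{theorem:old_L2_bound}, in the same spirit as Lemma~\ref{lemma:delta_rho_bound}. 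With this, the H\"older pairing $(p,q) = (\infty,1)$ becomes the natural tool in the geometric perturbation estimates.

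I would then split each of the three differences on the right of \eqref{eq:54} by adding and subtracting the continuous Ritz image $\Rmapl u$, obtaining schematically
\begin{equation*}
\bigl[\text{form}_h(\Rmap u,\phi_h) - \text{form}(u,\varphi_h)\bigr]
= \bigl[\text{form}_h(\Rmap u,\phi_h) - \text{form}(\Rmapl u,\varphi_h)\bigr]
- \text{form}(u - \Rmapl u,\varphi_h),
\end{equation*}
with $\varphi_h = \phi_h^l$. The $m$- and $g$-geometric brackets are controlled by Lemma~\ref{lemma: geometric errors of forms - Holder} and the $L^\infty$-stability of $\Rmap u$ and $\mat_h \Rmap u$ (which follow by triangle inequality from Theorems~\ref{theorem: L infty errors in Ritz} and \ref{theorem: L infty errors in mat derivative of Ritz}), yielding $O(h^2)$. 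The matching Ritz-error pieces $-m(\mat_h(u-\Rmapl u),\varphi_h)$ and $-g(v_h; u-\Rmapl u,\varphi_h)$ are estimated by H\"older against $\|\varphi_h\|_{L^1}$, invoking respectively Theorem~\ref{theorem: L infty errors in mat derivative of Ritz} (this is what produces the dominant factor $h^2|\log h|^3$) and Theorem~\ref{theorem: L infty errors in Ritz}.

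The $a$-term is the delicate one. For the Ritz contribution $-a(u - \Rmapl u,\varphi_h)$ I would decompose $a = \aast - m$ and bound the $\aast$-part through the weighted pseudo-Galerkin estimate \eqref{eq:49}, using Lemma~\ref{lemma:wt_ritz_error_estimate} to control $\wtH{\Rmapl u}$ and Lemma~\ref{lemma:wtL2_to_Li_or_wtH1_to_W1i} (together with the fact that $\delta_h$ concentrates at $x_h^\ast$, the centre of the weight $\mu$) to bound $\wtH[-\alpha]{\varphi_h}$; the $m$-part is handled as before via Theorem~\ref{theorem: L infty errors in Ritz}. The main obstacle is the geometric bracket $[a_h(\Rmap u,\phi_h) - a(\Rmapl u,\varphi_h)]$: a direct application of Lemma~\ref{lemma: geometric errors of forms - Holder} would require $\|\nbg \varphi_h\|_{L^1}$, which blows up like $h^{-1}$. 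I would instead appeal to the weighted geometric error \eqref{eq:61} for $\aast$, again paired with the weighted stability of $\Rmapl u$ against $\wtH[-\alpha]{\varphi_h}$, and then recover $a$ from $\aast$ by subtracting the $m$-contribution, which is harmless. Collecting everything, the factor $|\log h|^3$ coming from the material derivative Ritz error dominates all other logarithmic losses, leading to the announced bound $\|R_h\|_{L^\infty} \leq c h^2 |\log h|^3 (\Wnorm[\surface(t)][2,\infty]{u} + \Wnorm[\surface(t)][2,\infty]{\mat u})$.
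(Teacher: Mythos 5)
Your overall architecture is the paper's: reduce $\Lnorm[\surface_{h}(t)][\infty]{R_{h}}$ to a pairing of \eqref{eq:54} against a test function of unit $L^{1}$-norm (the paper uses $L^{\infty}=(L^{1})'$ together with the $L^{1}$-stability of $\Lproj$ from Theorem~\ref{theorem:old_L2_bound}, you use a discrete delta, which is equivalent), add and subtract $\Rmapl u$, and control the geometric brackets with Lemma~\ref{lemma: geometric errors of forms - Holder} for $(p,q)=(\infty,1)$ and the matching Ritz-error pieces with Theorems~\ref{theorem: L infty errors in Ritz} and~\ref{theorem: L infty errors in mat derivative of Ritz}. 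The $m$- and $g$-terms are fine.

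The treatment of the $a$-term, however, has a genuine gap. Both \eqref{eq:49} and \eqref{eq:61} produce a bound of the form $ch^{2}\,\wtH{\cdot}\,\wtH[-\alpha]{\varphi_{h}}$, and for $\varphi_{h}=(\delta_{h}^{t,x_{h}^{\ast}})^{l}$ the factor $\wtH[-\alpha]{\varphi_{h}}$ is of size $h^{-1}$ up to logarithms, for every choice of $\alpha$: the norm contains $\wtL[-\alpha]{\nbg\varphi_{h}}$, the gradient of the discrete delta scales like $h^{-2}$ in $L^{2}$ on the element containing $x_{h}^{\ast}$, and the weight there is only $\mu^{\alpha}\simeq\rho^{2\alpha}=(\gamma h^{2}\abs{\log h})^{\alpha}$, which is exactly compensated by the corresponding growth of $\wtH{\lRmap u}$ near the singularity. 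So your route yields only $O(h\abs{\log h})$ for the stiffness contribution, one power of $h$ short. The paper avoids this entirely by using the definition of the Ritz map \eqref{eq:39}: since $\aast_{h}(\Rmap u,\phi_{h})=\aast(u,\varphi_{h})$ holds for \emph{every} $\phi_{h}\in S_{h}(t)$, in particular for the test function at hand, one has
\begin{equation*}
  a_{h}(\Rmap u,\phi_{h})-a(u,\varphi_{h})
  = m(u,\varphi_{h})-m_{h}(\Rmap u,\phi_{h}),
\end{equation*}
i.e.\ the whole stiffness bracket collapses to a mass bracket, which is then estimated exactly like your $m$-term — by Lemma~\ref{lemma: geometric errors of forms - Holder} with $(\infty,1)$ plus $\Lnorm[\surface(t)][\infty]{u-\Rmapl u}$ — with no gradient of the test function ever appearing. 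You should replace your weighted-norm argument for the $a$-term by this identity; the rest of your proof then goes through.
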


\begin{proof}
  Using Definition~\ref{definition:L2_proj}~\eqref{eq:47}, \eqref{eq:54}
  and since $L^{\infty}$ is the dual of $L^{1}$ we deduce
  \begin{equation*}
    \Lnorm[\surface_{h}(t)][\infty]{R_{h}}
    = \sup_{\substack{f_{h}\in L^{1}(\surface_{h}(t))\\
        \Lnorm[\surface_{h}(t)][1]{f_{h}}=1}}
    m_{h}(R_{h}, f_{h}) =
    \sup_{\substack{f_{h}\in L^{1}(\surface_{h}(t))\\
        \Lnorm[\surface_{h}(t)][1]{f_{h}}=1}}
    m_{h}(R_{h}, \Lproj f_{h}).
  \end{equation*}
  Now consider
  \begin{align*}
    m_{h}(R_{h},\Lproj f_{h})
    & = m_{h}(\mat_{h} \Rmap u, \Lproj f_{h}) - m(\mat_{h} u, \Lproj f_{h}^{l}) \\
    & \quad + g_{h}(V_{h}; \Rmap u, \Lproj f_{h})
      - g(v_{h}; u, \Lproj f_{h}^{l}) \\
    & \quad + a_{h}(\Rmap u , \Lproj f_{h})
      - a(u , \Lproj f_{h}^{l}) \\
    & = I_{1} + I_{2} + I_{3}.
  \end{align*}
  Using Lemma~\ref{lemma: geometric errors of forms - Holder} and
  Theorem~\ref{theorem:old_L2_bound} it is easy to see
  \begin{align*}
    \abs{I_{1}}
    % ------------------------------------------------------------
    % Details
    % & \leq \abs{m_{h}(\mat_{h} \Rmap u - \mat_{h} u^{-l}, \Lproj f_{h})} \\
    % & \quad + \bigabs{m_{h}(\mat_{h} u^{-l}, \Lproj f_{h})
    %   - m(\mat_{h} u, \Lproj f_{h}^{l})} \\
    % ------------------------------------------------------------
    & \leq c \bigl(
    \Lnorm[\surface(t)][\infty]{\mat_{h} u - \mat_{h} (\Rmap u)^{l}} \\
    & \quad + h^{2}( \Lnorm[\surface(t)][\infty]{\mat u}
    + h^{2}\Wnorm[\surface(t)][1,\infty]{u})
    \bigr) \Lnorm[\surface_{h}(t)][1]{f_{h}} \\
    \abs{I_{2}}
    % ------------------------------------------------------------
    % Details
    % & \leq \abs{g_{h}(V_{h}; \Rmap u - u^{-l}, \Lproj f_{h})} \\
    % & \quad + \abs{ g_{h}(V_{h}; u^{-l},\Lproj f_{h})
    %   - g(v_{h}; u, \Lproj f_{h}^{l})} \\
    % ------------------------------------------------------------
    & \leq c (
    \Lnorm[\surface(t)][\infty]{u - \Rmap u^{l}}
    + h^{2} \Lnorm[\surface(t)][\infty]{u}
    ) \Lnorm[\surface_{h}(t)][1]{f_{h}} \\
    \abs{I_{3}}
    % ------------------------------------------------------------
    % Details
    % & = \lvert \aast_{h}(\Rmap u, \Lproj f_{h})
    %   - \aast(u,\Lproj f_{h}^{l}) \\
    % & \quad - \bigl(m_{h}(\Rmap u, \Lproj f_{h})
    %   - m(u,\Lproj f_{h}^{l})\bigr)\rvert \\
    % & = \abs{m(u, \Lproj f_{h}^{l}) - m_{h}(\Rmap u, \Lproj f_{h})} \\
    % & \leq \abs{m(u,\varphi) - m_{h}(u^{-l},\Lproj f_{h})} \\
    % & \quad + \abs{m_{h}(u^{-l} - \Rmap u , \Lproj f_{h})} \\
    % ------------------------------------------------------------
    & \leq c (
    h^{2}\Lnorm[\surface(t)][\infty]{u}
    + \Lnorm[\surface(t)][\infty]{u - (\Rmap u)^{l} })
    \Lnorm[\surface_{h}(t)][1]{f_{h}}
  \end{align*}
  Theorem~\ref{theorem: L infty errors in Ritz} and Theorem~\ref{theorem: L
    infty errors in mat derivative of Ritz}  imply the claim.
\end{proof}

\section{A numerical experiment}
\label{sec:numerical-experiment}

We present a numerical experiment for an evolving surface parabolic problem
discretized in space by the evolving surface finite element method.  As a time
discretization method we choose backward difference formula 4 with a
sufficiently small time step (in all the experiments we choose
\(\tau = 0.001\)).  \par
As initial surface \(\surface_{0}\)
we choose the unit sphere \(S^{2}\subset \R^{3}\).
The dynamical system is given by
\(\Phi(x,y,z,t) = (\sqrt{1+0.25 \sin(2\pi t)}  x, y, z) \),
which implies the velocity
\(v(x,y,z,t) = (\pi  \cos(2\pi t) / (4 + \sin(2\pi t)) x,0,0 )\), over the time
interval \([0,1]\).
As the exact solution we choose \(u(x,y,z,t) = x y e^{-6t}\).
The complicated right-hand side was calculated using the computer algebra system
Sage \cite{sage}. \par
We give the errors in the following norm and seminorm
\begin{align*}
  L^\infty(L^\infty):
  &\qquad \max_{1\leq n \leq N}\|u_h^n - u(.,t_n)\|_{L^\infty(\Ga(t_n))},\\
  L^2(W^{1,\infty}):
  &\qquad  \Big(\tau \sum_{n=1}^{N}
  \lvert \nb_{\Ga(t_n)}\big(u_h^n - u(.,t_n)\big)\rvert_{L^\infty(\Ga(t_n))}^2
  \Big)^{1/2}.
\end{align*}
The experimental order of convergence (EOC) is given as
\begin{equation*}
  EOC_{k}=\frac{\ln(e_{k}/e_{k-1})}{\ln(2)}, \qquad (k=2,3, \dotsc,n),
\end{equation*}
where \(e_{k}\) denotes the error of the \(k\)-th level.
\begin{table}[!ht]
  \centering
  \begin{tabular}{r r  l l l l}
    \toprule
    level & dof & \(L^{\infty}(L^{\infty})\) & EOCs & \(L^{2}(W^{1,\infty})\) & EOCs \\
    \midrule
    1 & 126   & 0.00918195 & -    & 0.01921707 & - \\
    2 & 516   & 0.00308305 & 1.57 & 0.01481673 & 0.37 \\
    3 & 2070  & 0.00100752 & 1.61 & 0.00851267 & 0.80 \\
    4 & 8208  & 0.00025326 & 1.99 & 0.00399371 & 1.09 \\
%    5 & 32682 & 0.00007752 & 1.71 & 0.00223474 & 0.84 \\
    \bottomrule
  \end{tabular}
   % original data
   % errors
   % 9.181951e-03 1.921707e-02
   % 3.083053e-03 1.481673e-02
   % 1.007524e-03 8.512668e-03
   % 2.532567e-04 3.993712e-03
   % 7.751871e-05 2.234741e-03
   % eoc
   % 1.57444  0.375161
   % 1.61355  0.799544
   % 1.99214  1.09188
   % 1.70798  0.837623
  \caption{Errors and EOCs in the \(L^{\infty}(L^{\infty})\) and
    \(L^{2}(W^{1,\infty})\) norms}
%  \label{table: EOCs}
\end{table}

\section*{Acknowledgement}
The authors would like to thank Prof.\ Christian Lubich for the invaluable
discussions on the topic, and for his encouragement and help during the
preparation of this paper.  We would also like to thank Buyang Li from Nanjing
University  for his discussions on the topic.
The research stay of B.K.~at the University of T\"{u}bingen has been funded by
the Deutscher Akademischer Austausch Dienst (DAAD).

%    Bibliographies can be prepared with BibTeX using amsplain,
%    amsalpha, or (for "historical" overviews) natbib style.
\bibliographystyle{amsplain}
%    Insert the bibliography data here.

\appendix

\section{A Green's function for evolving surfaces}
% \label{sec:greens-funct-evolv}

Aubin \cite[Section~4.2]{Aubinbook} proves existence of a Green's function on a
closed manifold \(M\), that is a function which satisfies in \(M\times M\)
\begin{equation*}
  \laplace_{Q\text{ distr.}} \green(P,Q) = \delta_{P}(Q),
\end{equation*}
where \(\laplace\) is the Laplace--Beltrami operator on \(M\).  The Green's
function is unique up to an constant.  For
Lemma~\ref{lemma:improved_inverse_estimate} we need that the first derivative of
a Green's function can be bounded independent of \(t\).

\begin{theorem}[Green's function]
  \label{thm:green}
  Let $\surface(t)$ with $t\in [0,T]$ be an evolving surface.  There exists
  a Green's function \(\green(t;x,y)\) for \(\surface(t)\).  The value of
  $\green(x,y)$ depends only on the value of $\riemdist(x,y)$.
  \(\green(x,y)\)satisfies the   inequality
  \begin{equation*}
    \abs{\nbg^{x} \green(t;x,y) } \leq c \frac{1}{\riemdist(x,y)}.
  \end{equation*}
  for some $c>0$ independent of $t$.  \par
  Furthermore for all functions $\varphi\in C^{2}(\GT)$ it holds
  \begin{equation}
    \label{eq:8}
    \varphi(x,t) = \frac{1}{V} \int_{\surface(t)} \varphi(y,t)\mathrm{d}y -
    \int_{\surface(t)} \green(t;x,y) \lb\varphi(y,t) \mathrm{d}y.
  \end{equation}
\end{theorem}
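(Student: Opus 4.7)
The plan is to adapt the classical parametrix construction of Aubin \cite{Aubinbook} to the evolving-surface setting, taking care that every constant in the construction can be chosen independent of $t$. First I would fix $t\in[0,T]$ and work on the smooth closed surface $\surface(t)$, whose injectivity radius is uniformly bounded below on the compact interval $[0,T]$ thanks to smoothness of the dynamical system $\Phi$. Choose a smooth cut-off $\chi$ equal to $1$ near the origin and vanishing outside a geodesic ball inside the injectivity radius, and set
\begin{equation*}
  H(t;x,y) := -\frac{1}{2\pi}\,\chi\bigl(\riemdist(x,y)\bigr)\,\log\riemdist(x,y).
\end{equation*}
A computation in geodesic normal coordinates at $x$, using the standard volume expansion on a Riemannian $2$-manifold, yields $\lb_y H(t;x,y) = \delta_x(y) + K(t;x,y)$ with a remainder kernel $K$ that is bounded on $\surface(t)\times\surface(t)$ and smooth off the diagonal. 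Since $\int_{\surface(t)}\lb_y H(t;x,\cdot)=0$ on the closed surface while $\int \delta_x = 1$, a mean-value correction $-V(t)^{-1}$ must enter, and the true Green's function $\green(t;x,y) = H(t;x,y)+W(t;x,y)$ will satisfy $\lb_y \green = \delta_x - V(t)^{-1}$ (up to the sign convention of \cite{Aubinbook}).

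Second, I would determine $W$ by solving a Fredholm integral equation with kernel $K$ restricted to the orthogonal complement of the constants (the kernel of $\lb$). Iteration of the operator with kernel $K$ smooths, so $W$ can be chosen continuous with bounded gradient on $\surface(t)\times\surface(t)$. The representation formula \eqref{eq:8} then follows at once by testing the distributional identity $\lb_y \green = \delta_x - V^{-1}$ against $\varphi\in C^{2}(\GT)$ and integrating by parts twice; there are no boundary terms since $\surface(t)$ is closed, and the log singularity is integrable. For the gradient estimate, symmetry of $\green$ under swapping $x\leftrightarrow y$ reduces $|\nbg^{x}\green(t;x,y)|$ to $|\nbg^{y}\green(t;x,y)|$; the only singular contribution comes from $\nbg^{y}\bigl[-\tfrac{1}{2\pi}\log\riemdist(x,y)\bigr]$, which is $O(1/\riemdist(x,y))$, while the terms involving $\chi'$ and $\nbg W$ are uniformly bounded. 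This yields $|\nbg^{x}\green(t;x,y)|\leq c/\riemdist(x,y)$.

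The main obstacle is making the constant $c$ uniform in $t\in[0,T]$. This reduces to (i) uniform lower bounds on the injectivity radius and uniform bounds on curvature for the evolving family, and (ii) uniform invertibility of the Fredholm operator $\mathrm{Id}+\mathcal{K}(t)$ on the orthogonal complement of constants. Both follow from smoothness of $\Phi$ and compactness of $[0,T]$; the cleanest route is to pull everything back to $\surface_{0}$ via the diffeomorphism $\Phi_{t}$ and argue by continuous dependence of the pulled-back metric, kernel, and resolvent on $t$. The statement that $\green(x,y)$ ``depends only on $\riemdist(x,y)$'' I would read as referring only to its leading singular part $-\tfrac{1}{2\pi}\log\riemdist(x,y)$, which is what is actually used in the geodesic-polar-coordinate argument of Lemma~\ref{lemma:improved_inverse_estimate}.
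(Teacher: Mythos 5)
Your proposal follows essentially the same route as the paper: both adapt Aubin's parametrix construction of the Green's function on a closed manifold and identify the uniform (in the point and in $t$) lower bound on the injectivity radius as the key issue for obtaining a $t$-independent constant, which the paper settles via continuity of the exponential map in $t$ together with its modified inverse function theorem (Lemma~\ref{lemma:inv_fct_thm_parameter}). The paper's proof is far terser---it simply cites Aubin's Theorem~4.13 for \eqref{eq:8} and calls the $t$-independence a straightforward calculation---so your explicit parametrix/Fredholm details, the derivation of the gradient bound from the logarithmic leading term, and your reading of ``depends only on $\riemdist(x,y)$'' as referring to the singular part are sensible elaborations rather than deviations.
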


\begin{proof}
  As noted in Aubin \cite[4.10]{Aubinbook} the distance \(r = \riemdist(x,y)\)
  is only a Lipschitzian function on \(\surface(t)\).  To use his construction
  we therefore need to revise that the injectivity radius at any point \(P\in
  \surface(t)\) can be bounded by below by a number independent of \(P\) and
  \(t\).  This follows if the Riemannian exponential map is continuous in \(t\)
  and from Lemma~\ref{lemma:inv_fct_thm_parameter}.  To prove that the
  Riemannian exponential map is continuous one carefully revises the
  construction of exponential map as it is given in Chavel
  \cite[Chapter~1]{2006_ch}.  Formula~\eqref{eq:8} follows from Aubin
  \cite[Theorem~4.13]{Aubinbook} and that the constant is independent of \(t\)
  is a straightforward calculation.
\end{proof}

\section{Calculations with some weight functions on evolving surfaces}
\label{sec:extens-some-auxil}

\subsection{Integration with geodesic polar coordinates on evolving surfaces}
\label{sec:integr-with-polar}

Assume we have sufficiently smooth function \(f\colon \surface(t) \times
\surface(t) \to \R\), where the value \(f(x,y)\) depends on the distance \(r =
\riemdist(x,y)\) and we want to estimate the quantity \(\int_{\surface(t)}
f(x,y) \d y \) for a fix \(y\).  \par
Applying the well known coarea formulae to the distance function \(r\),
cf.\ Chavel \cite[Theorem~3.13]{2006_ch} and Morgan
\cite[Theorem~3.13]{morgan}, we reach at
\begin{multline*}
  \int_{\surface(t)} f(x,y) \d y = \int_{0}^{\infty}
  \int_{\{\riemdist(x,y)=r\}} f(r) \d \omega \d r \\
  = \int_{0}^{\infty}
  \frac{\mathcal{H}^{m}\bigl(\{\riemdist(x,y)=r\}\bigr)}{r^{m}}  f(r) r^{m} \d r,
\end{multline*}
where $\mathcal{H}^{m}$ denotes the $m$-dimensional Hausdorff measure.  If
\(\surface(t)\) is not a closed surface but \(\R^{m+1}\) then
\(\frac{\mathcal{H}^{m}\bigl(\{\riemdist(x,y)=r\}\bigr)}{r^{m}}\) would be
constant.  For closed surfaces the situation is different.
Obviously there exists a positive number \(R>0\) independent of \(t\) and
\(x,y\in \surface(t)\) such that for all \(r \geq R\) it holds
\begin{equation*}
  \mathcal{H}^{m}\bigl(\riemdist(x,y)=r\bigr) = 0.
\end{equation*}

\begin{lemma}
  \label{lemma:geodesic_sphere}
  There exists $c>0$ depending on $t\in [0,T]$ and $x\in \surface(t)$ such that
  \begin{equation*}
    \sup_{r>0} \frac{\mathcal{H}^{m}\bigl(\{\riemdist(x,y)=r\}\bigr)}{r^{m}}
    \leq c.
  \end{equation*}
\end{lemma}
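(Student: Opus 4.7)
The plan is to split the supremum in $r$ into a small-$r$ regime and a large-$r$ regime, separated by the injectivity radius of $\surface(t)$ at $x$, call it $r_{\mathrm{inj}} > 0$. On the small-$r$ part I would invoke the classical asymptotic volume formula for geodesic spheres on a Riemannian manifold, which is the content of Gray \cite{geodesic_ball}. On the large-$r$ part I would use compactness of $\surface(t)$ together with the fact that $\{y : \riemdist(x,y)=r\}$ is empty for $r$ exceeding the diameter of $\surface(t)$.

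\textbf{Small $r$ regime.} Within the injectivity radius the exponential map $\exp_x \colon T_x \surface(t) \to \surface(t)$ is a diffeomorphism onto its image, and the level set $\{y : \riemdist(x,y) = r\}$ is precisely the image under $\exp_x$ of the Euclidean sphere of radius $r$ in $T_x \surface(t)$. A Jacobi-field calculation along radial geodesics gives the Jacobian of $\exp_x$ at a point of norm $r$ as $1 + O(r^2)$ as $r\to 0$, so the induced $(m-1)$-dimensional surface measure of the geodesic sphere equals $\omega_{m-1} r^{m-1}(1 + O(r^2))$, and the ratio with $r^{m-1}$ is bounded as $r\to 0$. This is exactly Gray's expansion.

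\textbf{Large $r$ regime.} Since $\surface(t)$ is a smooth closed hypersurface, $\operatorname{diam}(\surface(t)) < \infty$ and the level set is empty for $r > \operatorname{diam}(\surface(t))$, contributing zero. On the remaining bounded range $r_{\mathrm{inj}} \leq r \leq \operatorname{diam}(\surface(t))$, the surface measure of $\{\riemdist(x,\cdot)=r\}$ is bounded (it is a closed subset of the compact smooth manifold $\surface(t)$, with a bound obtainable from the coarea formula applied to the distance function and the finite total area), while $r^{m-1} \geq r_{\mathrm{inj}}^{m-1} > 0$, so the ratio is again bounded. Combining both regimes yields the claim, with $c$ depending on $t$ and $x$ only through $r_{\mathrm{inj}}$, the local curvature bounds of $\surface(t)$, and $\operatorname{diam}(\surface(t))$.

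The main obstacle is a bookkeeping one: reconciling the exponent $m$ as written (the level set $\{\riemdist(x,\cdot)=r\}$ is $(m-1)$-dimensional in $\surface(t)$, so one would naturally expect $\mathcal{H}^{m-1}/r^{m-1}$ in the coarea computation) with the expression $\mathcal{H}^m/r^m$ in the statement. Once this quantity is read as the density of the geodesic-sphere measure with respect to its Euclidean counterpart of the same codimension, the Jacobi-field / Gray argument above applies verbatim. If uniformity of $c$ in $t$ and $x$ were additionally required, one would invoke smoothness of the dynamical system $\Phi$ and compactness of $[0,T]$ to obtain uniform lower bounds on $r_{\mathrm{inj}}$ together with uniform upper bounds on curvature and diameter; but the statement as formulated permits $c$ to depend on $t$ and $x$, so this extra step is unnecessary.
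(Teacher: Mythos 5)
Your overall strategy coincides with the paper's: Gray's small-geodesic-ball asymptotics handle the regime $r\to 0$, and for $r$ bounded away from zero one combines a uniform bound on the measure of the level set with the lower bound $r\geq r_{\mathrm{inj}}$ and the fact that the level set is empty beyond the diameter. Your reading of the exponent is also the right one: the level set is $(m-1)$-dimensional, and the quantity the paper actually uses downstream (e.g.\ the bound $\int_{\surface(t)}f\leq c\int_0^R r^{m-1}f(r)\,\d r$ with $m=2$) is $\mathcal{H}^{m-1}/r^{m-1}$; the $\mathcal{H}^m/r^m$ in the statement is an off-by-one in the indexing.

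The one step you justify too loosely is the uniform bound on $\mathcal{H}^{m-1}\bigl(\{\riemdist(x,\cdot)=r\}\bigr)$ in the intermediate range. The coarea formula applied to the distance function (whose tangential gradient has unit length a.e.) only yields $\int_0^\infty \mathcal{H}^{m-1}\bigl(\{\riemdist(x,\cdot)=r\}\bigr)\,\d r = \mathcal{H}^m\bigl(\surface(t)\bigr)<\infty$, an averaged statement that does not control the supremum over $r$; and "closed subset of a compact manifold" gives no $(m-1)$-dimensional measure bound at all. Note also that beyond the injectivity radius the level set meets the cut locus and is no longer a smooth sphere, so smoothness cannot be quoted either. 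The paper closes exactly this point by writing the level set as $\exp_x\bigl(W_x\cap S_x(r)\bigr)$, where $W_x\subset \mathrm{T}_x\surface(t)$ is the star-shaped region bounded by the preimage of the cut locus, contained in a ball of radius $R$ independent of $x$ and $t$; applying the area--coarea formula to $\exp_x$ restricted to this bounded set, whose Jacobian is controlled by smoothness and compactness, gives $\mathcal{H}^{m-1}\bigl(\{\riemdist(x,\cdot)=r\}\bigr)\leq c\,\omega_{m-1}r^{m-1}$ uniformly in $r$. Since you already have the exponential map in play for the small-$r$ regime, this is a local repair rather than a change of approach.
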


\begin{proof}
  It holds
  \begin{equation*}
    \lim_{r\to 0} \frac{\mathcal{H}^{m}\bigl(\{\riemdist(x,y)=r\}\bigr)}{r^{m}} =
    \omega_{m},
  \end{equation*}
  where $\omega_{m}$ is the volume of the $m$-dimensional sphere in $\R^{m+1}$,
  cf.\ Gray \cite[Theorem~3.1.]{geodesic_ball}.  Thus the proof is
  finished if find a $c>0$ such that
  \begin{equation*}
    \mathcal{H}^{m}\bigl(\{\riemdist(x,y)=r\}\bigr) \leq c \quad \forall r \in
    [0,\infty).
  \end{equation*}
  This can be seen as follows.  For a fix point \(p\in \surface(t)\) it is
  possible to use the Riemannian exponential to flat out \(\surface(t)\), cf.\
  Figure~\ref{fig:wp_set_explanation} for an illustration on the torus.
  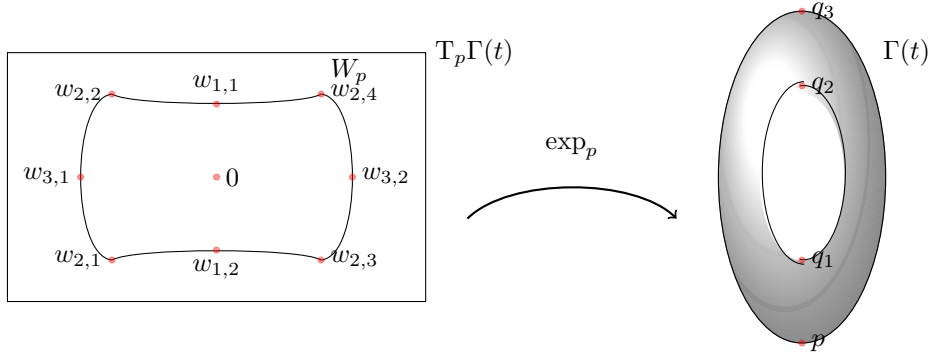
\begin{figure}
    \centering
    \begin{tikzpicture}[scale=.55]
      % RHS
      \begin{scope}[xshift = 3cm]
        % Torus H\"{u}lle
        \path[ball color = white, draw = black, opacity=0.4] (1,1) circle [x radius
        = 2, y radius = 4];
        \draw (1,1) circle [x radius = 2, y radius = 4];
        \path (3.5,4) node {$\surface(t)$};
        % Torus Loch
        \filldraw[white] (1,1) circle [x radius =1, y radius = 2];
        % 3D -Effekt
        % \draw (1.05,-.1) arc [x radius =.5, y radius = 1.1, start angle = 270, end
        % angle = 90];
        % \draw (.95,2) arc [x radius =.5, y radius = 1, start angle = 95, delta angle
        % = -190];
        \draw (1.05,-1.1) arc [x radius =1, y radius = 2.2, start angle = 270, end
        angle = 90];
        \draw (.95,3.2) arc [x radius =1, y radius = 2.1, start angle = 95, delta angle
        = -190];

        % Punkt auf dem Torus
        % \filldraw (1.6,2.2) circle (2pt) node [right = .5cm] {$f(x,y,z)=z$};
        % Alle kritische Punkte
        \foreach \y/\l in { {-3/p}, {-1/q_{1}}, {3.2/q_{2}}, {5/q_{3}}}
        {
          \filldraw[red,opacity=0.4] (1,\y) circle (2pt);
          \path (1,\y) node[anchor = west] {$\l$};
        }
      \end{scope}

      % exp arrow
      \draw[->, thick] (-4,0) .. controls (-3,1) and (0,1) ..  (1,0);
      \path (-1.5,1.2) node [anchor = south] {$\exp_{p}$};

      % LHS
      \begin{scope}[xshift = -10cm, yshift = 1cm]
        % tangent space
        \draw (-5,-3) rectangle (5,3) node [anchor = west]
        {$\mathrm{T}_{p}\surface(t)$};
        % W_p set
        %- left
        \draw (-2.5,-2) .. controls (-3.5,-1.9) and (-3.5,1.9) .. (-2.5,2);
        %- right
        \draw (2.5,-2) .. controls (3.5,-1.9) and (3.5,1.9) .. (2.5,2);
        %- bottom
        \draw (-2.5,-2) .. controls (-2,-1.7) and (2,-1.7) ..  (2.5,-2);
        %- top
        \draw (-2.5,2) .. controls (-2,1.7) and (2,1.7) ..  (2.5,2)
        node [anchor = south west] {$W_{p}$};
        % critical points
        %- p
        \filldraw[red,opacity=0.4] (0,0) circle (2pt);
        \path (0,0) node[anchor = west] {$0$};
        %- w_1
        \filldraw[red,opacity=0.4] (0,1.76) circle (2pt);
        \filldraw[red,opacity=0.4] (0,-1.76) circle (2pt);
        \path (0,1.76) node[anchor = south] {$w_{1,1}$};
        \path (0,-1.76) node[anchor = north] {$w_{1,2}$};
        %- w_2
        \filldraw[red,opacity=0.4] (-2.5,-2) circle (2pt);
        \filldraw[red,opacity=0.4] (-2.5,2) circle (2pt);
        \filldraw[red,opacity=0.4] (2.5,2) circle (2pt);
        \filldraw[red,opacity=0.4] (2.5,-2) circle (2pt);
        \path (-2.5,-2) node[anchor = east] {$w_{2,1}$};
        \path (-2.5,2) node[anchor = east] {$w_{2,2}$};
        \path (2.5,-2) node[anchor = west] {$w_{2,3}$};
        \path (2.5,2) node[anchor = west] {$w_{2,4}$};
        %- w_3
        \filldraw[red,opacity=0.4] (-3.25,0) circle (2pt);
        \filldraw[red,opacity=0.4] (3.25,0) circle (2pt);
        \path (-3.25,0) node[anchor = east] {$w_{3,1}$};
        \path (3.25,0) node[anchor = west] {$w_{3,2}$};
      \end{scope}
    \end{tikzpicture}
    \caption{Illustration of a possible $W_{p}$ for the Torus as a subset of
      $\R^{3}$ with induced metric.  Note that the opposite boundary of $W_{p}$
      are identified.  It holds $\exp_{p}(w_{i,*})= q_{i}$ and $\exp_{p}(0)=p$.
    }
    \label{fig:wp_set_explanation}
  \end{figure}
  We make this argument precise.  \par
  For $r\in [0,\infty)$ let
  \begin{equation*}
    S_{p}(r) := \{ v\in \mathrm{T}_{p} \surface(t) \mid g_{p}(v,v) = r^{2} \}
  \end{equation*}
  be the sphere of radius $r$ and for $v\in S_{p}(1)$ consider the geodesic
  \begin{equation*}
    f_{v}\colon [0,\infty ) \to  \surface, \quad \lambda \mapsto \exp_{p}(\lambda
    v).
  \end{equation*}
  It is well known that a geodesic is just locally length minimizing.  Hence
  there exists a unique $\lambda_{*}(v)>0$, such that
  $f_{v}\rvert [0,\lambda_{*}(v)]$ is a length minimizing geodesic and for every
  $\varepsilon >0$ $f_{v}\rvert [0,\lambda_{*}(v) + \varepsilon]$ is not anymore
  length minimizing.  We define
  \begin{equation*}
    W_{p}(t) := \{ w\in \mathrm{T}_{p}\surface(t)\mid w = \lambda \cdot v \text{ with
    }v\in S_{p}\text{ and } \lambda \in [0, \lambda_{*}(v)]\}.
  \end{equation*}
  Obviously it holds for every \( w\in W_{p}(t)\)  that \(\riemdist\bigl(p,\exp_{p}(w)\bigr) \leq
  R\).  Further there exists for every $q\in \surface$ a unique $w\in W_{p}$
  with $\exp_{p}(w) = q$.  Clearly it holds
  \begin{equation*}
    \exp_{p}\bigl( W_{p} \cap S_{p}(r) \bigr) = \{\riemdist(x,y)=r\}.
  \end{equation*}
  Now apply a general Area-coarea Formula, cf.\ \cite[Theorem~3.13]{morgan}, to
  finish the proof.
\end{proof}

Using this lemma we have the estimate
\begin{equation*}
  \int_{\surface(t)} f(x,y) \d y  \leq c \int_{0}^{R} r^{m}f(r) \d r.
\end{equation*}

\subsection{Comparison of extrinsic and intrinsic distance}

\begin{lemma}
  \label{lemma:comparision_extr_intr_dist}
  There exists a constant $c>0$ independent of $t$ such that for all $x,y\in
  \surface(t)$ the following inequality holds
  \begin{align}
    \label{eq:3}
    c\cdot  \riemdist(x,y) \leq \abs{x-y}.
  \end{align}
\end{lemma}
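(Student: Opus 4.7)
The plan is to combine a local Taylor-expansion argument (valid at distances below the injectivity radius) with a compactness argument at larger scales, relying on the uniform smoothness of the dynamical system $\Phi$ on $[0,T]$ to keep all constants independent of $t$. Since $\Phi\colon \surface_0 \times [0,T] \to \R^{m+1}$ is smooth on a compact domain, the injectivity radius of $\surface(t)$ is bounded below and the Euclidean norm of the second fundamental form of $\surface(t)$ is bounded above, both uniformly in $t$ --- the same uniformity already exploited in the proof of Theorem~\ref{thm:green}. Fix $\delta_0 > 0$ smaller than half of this uniform injectivity-radius bound.

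For $x,y \in \surface(t)$ with $\riemdist(x,y) = \ell < \delta_0$, let $\gamma\colon[0,\ell]\to \surface(t)$ be the unit-speed minimizing geodesic from $x$ to $y$. Since $\gamma$ is a geodesic in $\surface(t) \subset \R^{m+1}$, its Euclidean acceleration $\gamma''(s)$ is normal to $\surface(t)$ with $\abs{\gamma''(s)} \le C$ uniformly in $t$. A short calculation using $\gamma'(s) = \gamma'(0) + \int_0^s \gamma''(r)\,\d r$ and Fubini gives $y - x = \ell\,\gamma'(0) + \int_0^\ell (\ell-s)\gamma''(s)\,\d s$, so $\abs{y - x} \geq \ell - C\ell^2/2$; choosing $\delta_0 < 1/C$ yields $\abs{y-x} \geq \ell/2 = \riemdist(x,y)/2$, which is the desired inequality in this regime.

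For $\riemdist(x,y) \geq \delta_0$, since the diameter of $\surface(t)$ is also uniformly bounded above, it suffices to show that $\abs{x-y}$ is bounded below by a positive constant uniform in $t$. Transporting to the reference surface via $\xi = \Phi_t^{-1}(x)$, $\eta = \Phi_t^{-1}(y)$, smoothness of $\Phi$ and $\Phi^{-1}$ on the compact set $\surface_0 \times [0,T]$ makes $\Phi_t$ uniformly bi-Lipschitz and the family of pullback Riemannian metrics $g_t$ on $\surface_0$ uniformly equivalent to $g_0$. The desired bound therefore reduces to its analogue on the single smooth compact embedded surface $(\surface_0, g_0)$, which is a standard consequence of compactness (the set $\{(\xi,\eta)\in\surface_0^2 : \riemdist_{g_0}(\xi,\eta) \geq c\delta_0\}$ is compact and disjoint from the diagonal, so $\abs{\xi-\eta}$ attains a positive minimum there). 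The main technical obstacle throughout is uniformity in $t$, handled by the smoothness of $\Phi$ on the compact cylinder $\surface_0 \times [0,T]$; the geometric content is otherwise classical.
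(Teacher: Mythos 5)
Your proof is correct, but your treatment of the short-distance regime is genuinely different from the paper's. The paper also splits into $\riemdist(x,y)\leq r$ and $\riemdist(x,y)\geq r$ and handles the second case by the same compactness argument you give; but locally it writes $\surface(t)$ as a graph $z'\mapsto\bigl(z',f(z')\bigr)$ over the tangent plane at $x$ (using that $\nu(x)\cdot\nu(y)\geq\cos(\pi/6)$ for nearby points, so $\Wnorm[][1,\infty]{f}\leq m\tan(\pi/6)$) and then \emph{upper-bounds the intrinsic distance} by the length of the explicit competitor path $t\mapsto\bigl(ty',f(ty')\bigr)$, getting $\riemdist(x,y)\leq\sqrt{1+\Wnorm[][1,\infty]{f}^{2}}\,\abs{x-y}$. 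You instead take the minimizing geodesic and \emph{lower-bound the Euclidean chord}, $\abs{x-y}\geq\ell-C\ell^{2}/2\geq\ell/2$, using the uniform bound on the second fundamental form. Both are elementary and correct; the graph argument needs only first derivatives of the local parametrization and produces an explicit constant from the normal-alignment condition, while yours buys the clean constant $1/2$ at the price of invoking curvature bounds and Hopf--Rinow (note the injectivity-radius restriction you impose is not actually needed for existence of a minimizing geodesic on a closed surface; only $\delta_{0}<1/C$ matters). On the large-distance part you are in fact slightly more careful than the paper, which assumes ``for simplicity'' that $\surface(t)=\surface_{0}$, whereas you carry the $t$-uniformity through the compact cylinder $\surface_{0}\times[0,T]$ via the uniformly bi-Lipschitz $\Phi_{t}$.
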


\begin{proof}
  For simplicity we assume that \(\surface(t)=\surface_{0}\) for all \(t\in
  [0,T]\).  The basic idea is to find a radius \(r>0\) and two constant
  \(c_{1},c_{2}>0\) such that \eqref{eq:3} holds with \(c_{1}\) for
  \(\riemdist(x,y)\leq r\) and with \(c_{2}\) for \(\riemdist(x,y) \geq
  r\).  \par
  Observe that from the compactness from \(\surface_{0}\) it follows that there
  exists \(r>0\) such that for all \(\riemdist(x,y)\leq r\) it holds
  \begin{equation*}
    \nu(x) \cdot \nu(y) \geq \cos(\pi /6).
  \end{equation*}
  After rotation we may assume $x=0$, $\nu(x) = e_{n+1}$ and that
  $\surface_{0}$ may be written as graph of a smooth function, that means that
  there exits $f\colon U(x)\to \R$ smooth with $U(x)\subset \R^{n}$ an open subset,
  such that $z=(z',w)\in \surface_{0}\subset \R^{m}\times \R$ with
  $\riemdist(z,x)\leq r$ if and only if $z'\in U(x)$ and $w = f(z')$.  For
  $x=(0,0)$ and
  $y=\bigl(y',f(y')\bigr)$ consider the path $t\mapsto \bigl(ty',f(ty')\bigr)$.
  We calculate
  \begin{equation*}
    \riemdist(x,y) \leq \int_{0}^{1} \sqrt{1+ \d f_{ty'} y'} \d t \leq \sqrt{1+
    \lVert f \rVert_{W^{1,\infty}}^{2} } \abs{y'} \leq \sqrt{1+
    \lVert f \rVert_{W^{1,\infty}}^{2} } \abs{y-x}.
  \end{equation*}
  Now the derivatives of $f$ are bounded by $m\cdot \tan(\pi/6)$.  \par
  To get the existence of \(c_{2}>0\) observe that \(\riemdist\) is continuous
  and hence the set \(\riemdist^{-1}\{ r>0\}\) is compact.  On this set the
  function \(\abs{x-y}\) does not vanish and takes it maximum and minimum.
\end{proof}

\subsection{Weight functions}

\begin{definition}
  Let $\mu$ and $\widetilde{\mu}$ be like \eqref{eq:12} resp.\ \eqref{eq:10}.  For given
  $\mu$, $\widetilde{\mu}$ with curve $y=y(t)$, we define a curve
  $y_{h}=y_{h}(t):= y(t)^{-l}\in \surface_{h}(t)$.  Now we define
  a weight function on the discrete surface
  \begin{align*}
    \mu_{h}\colon \surface_{h}(t)\to \R,\quad \text{resp.}\quad
    \widetilde{\mu}_{h} \colon \surface_{h}(t) \to \R,
  \end{align*}
  via the same formula like \eqref{eq:12} resp.\ \eqref{eq:10}.
\end{definition}
\begin{lemma}
  There exists a constant $h_{0}=h_{0}(\gamma)>0$ sufficiently small and
  $c=c(h_{0})>0$ independent of $t$ and $h$ such that for all $0< h < h_{0}$ it
  holds
  \begin{align}
    \label{eq:41}
    \frac{1}{c} \mu \leq \mu_{h}^{l} \leq c \mu, \\
    \label{eq:42}
    \frac{1}{c} \widetilde{\mu}
    \leq \widetilde{\mu}_{h}^{l} \leq c \widetilde{\mu}.
  \end{align}
\end{lemma}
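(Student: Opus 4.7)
The plan is to compare the weight functions pointwise by controlling the displacement between $\surface_h(t)$ and $\surface(t)$ and between the base points $y_h$ and $y$. The key inputs are $\|d\|_{L^\infty(\surface_h(t))}\leq ch^2$ from the lemma in Section~\ref{subsection: lift}, together with the definitions $x = x^l + \nu(x^l)d(x)$ and $y_h = y^{-l}$, which give
\begin{equation*}
|x - x^l| \leq c h^2
\qquad\text{and}\qquad
|y - y_h| \leq c h^2
\end{equation*}
uniformly in $t$. From these, expanding
\begin{equation*}
|x - y_h|^2 = |x^l - y|^2 + 2(x^l - y)\cdot\big((x-x^l) - (y_h - y)\big) + \big|(x-x^l)-(y_h-y)\big|^2
\end{equation*}
and using the uniform bound $|x^l - y|\leq \mathrm{diam}(\surface(t)) \leq C$ yields
\begin{equation*}
\bigl| |x-y_h|^2 - |x^l-y|^2 \bigr| \leq C_{1} h^{2},
\end{equation*}
where $C_1$ is independent of $t$ and $h$.

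For the first estimate, I would add $\rho^2$ to both sides, using $\rho^2 = \gamma h^2|\log h|$ and $\mu(x^l)\geq \rho^2$, to obtain
\begin{equation*}
\bigl|\mu_h(x) - \mu(x^l)\bigr| \leq C_{1}h^{2} \leq \frac{C_{1}}{\gamma |\log h|}\,\mu(x^l).
\end{equation*}
For $0<h<h_0(\gamma)$ sufficiently small so that $C_1/(\gamma|\log h|) \leq \tfrac{1}{2}$, this gives $\tfrac{1}{2}\mu(x^l) \leq \mu_h(x) \leq \tfrac{3}{2}\mu(x^l)$, which is exactly \eqref{eq:41} after recalling $\mu_h^l(x^l)=\mu_h(x)$.

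For the second estimate, the same pointwise bound gives
\begin{equation*}
\bigl|\widetilde{\mu}_h(x)^2 - \widetilde{\mu}(x^l)^2\bigr| \leq C_{1}h^{2},
\end{equation*}
but here one cannot absorb $h^2$ into $\widetilde{\mu}^2$ via a logarithmic factor. Instead I would split into two regimes. When $|x^l - y| \geq K h$ for a suitably chosen constant $K$ (depending on $C_1$), the term $|x^l-y|^2$ dominates and a direct estimate yields equivalence. When $|x^l-y| < Kh$, both $\widetilde{\mu}(x^l)^2$ and $\widetilde{\mu}_h(x)^2$ are comparable to $h^2$, so their ratio is bounded above and below by a constant. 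Combining the two cases yields \eqref{eq:42}.

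The only delicate point is the interplay between the constants: the perturbation $h^2$ must be swallowed by the lower bound on the weight, which is trivial for $\widetilde{\mu}$ by case analysis but requires the logarithmic reservoir in $\rho^2$ for $\mu$, which is precisely why $h_0$ must be chosen small as a function of $\gamma$. This is straightforward once the expansion above is written down, so no serious obstacle is expected.
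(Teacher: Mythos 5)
Your proposal is correct and takes essentially the same route as the paper: both proofs rest on the bound $\lVert d\rVert_{L^{\infty}(\Gamma_h(t))}\le ch^{2}$, which gives $|x-x^{l}|\le ch^{2}$ and $|y-y_h|\le ch^{2}$, and then absorb the resulting $O(h^{2})$ geometric perturbation into the weight. The paper's sketch compares the distances $|x^{-l}-y_h|$ and $|x-y|$ before squaring (so that $(a+b)^{2}\le 2a^{2}+2b^{2}$ turns the perturbation into an $O(h^{4})$ term absorbed by $h^{2}$ resp.\ $\rho^{2}$, with no case split and no use of the logarithmic factor), but this is only a cosmetic difference from your expansion of the squared distances followed by the two-regime argument for $\widetilde{\mu}$.
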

\begin{proof}
  The main idea is to observe that we have the inequalities
  \begin{align*}
    \abs{x^{-l}- y_{h}}
    & \leq 2 d + \abs{x - y}, \\
    \abs{x - y}
    & \leq 2 d + \abs{x^{-l} - y_{h}},
  \end{align*}
  where $d = d(t) := \max_{x\in\surface(t)}
  \dist_{\R^{n+1}}\bigl(x,\surface_{h}(t)\bigr)$.
  % ------------------------------------------------------------
  % Details
  % \begin{align*}
  %   \mu_{h}^{l}(x)
  %   & = \abs{x^{-l} - y_{h}}^{2} + \rho^{2} \\
  %   & \leq (2 d + \abs{x-y})^{2} + \rho^{2} \\
  %   & \leq 4 ( 4 d^{2} + \abs{x-y}^{2}) + \rho^{2} \\
  %   & \leq 16 ( d^{2} + \abs{x-y}^{2} + \rho^{2}) \\
  %   & \overset{h < h_{0}}{\leq} 16( \abs{x-y}^{2}+ 2\rho^{2}) \\
  %   & \leq 32 ( \abs{x-y}^{2} + \rho^{2}) = 32 \mu(x).
  % \end{align*}
  % ------------------------------------------------------------
\end{proof}

\section{Modified analytic results for evolving surface problems}

\begin{lemma}[modified Gronwall inequality]
  \label{lemma:modfied_gronwall}
  Let $c>0$ be a positive constant, let $\varphi,\psi$ and $\rho$ be some
  positive functions defined on $[t,T]$ and assume for all $s\in [t,T]$ we have
  the inequality
  \begin{equation*}
    - \ddti{\varphi}{s}(s)  + \psi(s) \leq c \varphi(s) + \rho(s).
  \end{equation*}
  Then it holds
  \begin{equation*}
    \varphi(t) + \int_{t}^{T} \psi(s)\mathrm{d}s \leq e^{c(T-t)} \biggl(
    \varphi(T) + \int_{t}^{T}\rho(s)\mathrm{d}s\biggr).
  \end{equation*}
\end{lemma}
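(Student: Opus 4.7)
The plan is to multiply the hypothesis by a well-chosen integrating factor so that the $\varphi'$ and $c\varphi$ terms combine into a total derivative, and then integrate over $[t,T]$. The only thing to watch is that, since the inequality bounds $-\varphi'$ (rather than $+\varphi'$), we are in a backward-in-time situation: $\varphi(t)$ should be controlled by the terminal value $\varphi(T)$, not by an initial value. This dictates the choice of integrating factor $e^{-c(T-s)}$, which is an increasing positive function of $s$ taking the value $e^{-c(T-t)}$ at $s=t$ and $1$ at $s=T$.

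Concretely, I would multiply $-\varphi'(s)+\psi(s)\leq c\varphi(s)+\rho(s)$ through by $e^{-c(T-s)}$ and observe that
\begin{equation*}
\tfrac{\d}{\d s}\bigl(e^{-c(T-s)}\varphi(s)\bigr)=c\,e^{-c(T-s)}\varphi(s)+e^{-c(T-s)}\varphi'(s),
\end{equation*}
so that the terms $ce^{-c(T-s)}\varphi(s)-e^{-c(T-s)}\varphi'(s)$ coming from the inequality collapse to $-\tfrac{\d}{\d s}\bigl(e^{-c(T-s)}\varphi(s)\bigr)$. After rearrangement this reads
\begin{equation*}
\tfrac{\d}{\d s}\bigl(e^{-c(T-s)}\varphi(s)\bigr)\ \geq\ e^{-c(T-s)}\psi(s)-e^{-c(T-s)}\rho(s).
\end{equation*}

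Next, I would integrate this inequality from $s=t$ to $s=T$. This gives
\begin{equation*}
\varphi(T)-e^{-c(T-t)}\varphi(t)\ \geq\ \int_t^T e^{-c(T-s)}\psi(s)\,\d s-\int_t^T e^{-c(T-s)}\rho(s)\,\d s,
\end{equation*}
i.e., after moving the $\psi$-integral to the left,
\begin{equation*}
e^{-c(T-t)}\varphi(t)+\int_t^T e^{-c(T-s)}\psi(s)\,\d s\ \leq\ \varphi(T)+\int_t^T e^{-c(T-s)}\rho(s)\,\d s.
\end{equation*}
Finally, using $\rho\geq 0$ together with $e^{-c(T-s)}\leq 1$ on the right, and $\psi\geq 0$ together with $e^{-c(T-s)}\geq e^{-c(T-t)}$ on the left (both for $s\in[t,T]$), I can replace the weighted integrals by unweighted ones at the cost of the single factor $e^{-c(T-t)}$ on the left. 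Multiplying by $e^{c(T-t)}$ yields the claimed bound exactly.

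There is no genuine obstacle: this is the textbook integrating-factor proof of a Gronwall-type estimate, and the only subtlety is to pick the sign in the exponent so that both exponential comparisons go in the right direction. Alternatively, one could substitute $u(\tau):=\varphi(T-\tau)$ to convert the hypothesis into a standard forward-in-time differential inequality on $[0,T-t]$ and then apply the usual Gronwall lemma with integrating factor $e^{-c\tau}$; the two routes produce the same estimate with the same constant.
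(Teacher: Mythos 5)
Your proof is correct and is essentially the paper's own argument: the paper's proof consists precisely of computing $-\frac{\d}{\d s}\bigl[\varphi(s)e^{-c(T-s)}\bigr]$ and integrating from $t$ to $T$, which is exactly your integrating-factor computation, with the final sign comparisons ($e^{-c(T-s)}\leq 1$ on the $\rho$-side, $e^{-c(T-s)}\geq e^{-c(T-t)}$ on the $\psi$-side) carried out in the same way. No differences worth noting.
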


\begin{proof}
  Calculate $-\ddti{}{s}[ \varphi e^{-c(T-s)}]$
  and integrate from $t$ to $T$.
  % \begin{align*}
  %   -\ddti{}{s}[ \varphi e^{-c(T-s)}] = e^{-c(T-s)}\Bigl( -c\varphi(s) -
  %   \ddti{\varphi}{s}(s)\Bigr) \leq  e^{-c(T-s)} \bigl( \rho(s) -
  %   \psi(s)\bigr),\\
  %   -\varphi(T) + \varphi(t)e^{-c(T-t)} \leq  \int_{t}^{T}\rho(s) -
  %   e^{-c(T-s)}\psi(s) \mathrm{d}s \\
  %   \varphi(t)e^{-c(T-t)}\Bigl( \varphi(t) + \int_{t}^{T}
  %   \psi(s)\mathrm{d}s\Bigr) \leq \varphi(T) + \int_{t}^{T}\rho(s) \mathrm{d}s
  % \end{align*}
\end{proof}

\begin{lemma}[modified inverse function theorem]
  \label{lemma:inv_fct_thm_parameter}
  Let $f\colon \R^{n}\times [0,T]\to \R^{n}$ be a smooth map, denote by
  $f(t)(x) := f(x,t)$ and assume that for all $t\in [0,T]$ the map
  $\mathrm{d}f(t)_{0} = \ddxi{f}{x}(0,t)$ is invertible.  Then there exists
  $r>0$ independent of $t$ such that
  \[
  f(t)\colon f(t)^{-1}\{ B_{r}(0)\} \to \R^{n},\quad x\mapsto  f(x,t),
  \]
  is a diffeomorphism onto its image and we have
  $B_{r/2}(0)\subset f(t)^{-1}\{ B_{r}(0)\}$ for all $t$, where
  $B_{r}(0):=\{ x\in \R^{n}\mid \abs{x}\leq r\}$.  The map
  \[
  g \colon [0,T]\times B_{r}(0) \to \R^{n}, \quad
  (t,x)\mapsto f(t)^{-1}(x)
  \]
  is smooth.  In particular $g$ is smooth in $t$.
\end{lemma}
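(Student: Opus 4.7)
The plan is to apply the contraction-mapping proof of the classical inverse function theorem with constants that are uniform in $t$, exploiting the compactness of $[0,T]$. The crucial observation is that in the quantitative proof of the inverse function theorem, the radius on which $f(t)$ is invertible depends only on $\|df(t)_0^{-1}\|$ and a Lipschitz bound for $x \mapsto df(x,t)$ near the origin, both of which can be controlled uniformly in $t$.

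First I would establish the uniform quantitative ingredients. Since $t \mapsto df(t)_0$ is continuous into the open subset $\mathrm{GL}(\R^n) \subset \R^{n\times n}$ and matrix inversion is continuous there, the compactness of $[0,T]$ yields a uniform bound $\|df(t)_0^{-1}\| \leq C_1$. Fix $\varepsilon > 0$ and observe that on the compact set $\overline{B_\varepsilon(0)} \times [0,T]$ the smoothness of $f$ gives a uniform bound $\sup |d^2 f| \leq C_2$; in particular $x \mapsto df(x,t)$ is $C_2$-Lipschitz on $B_\varepsilon(0)$ uniformly in $t$. With these in hand, I would consider for fixed $t$ and target $y$ the auxiliary map
\[
\Phi_{y,t}(x) := x - df(t)_0^{-1}\bigl(f(x,t) - y\bigr),
\]
whose fixed points are precisely preimages of $y$ under $f(t)$. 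A direct estimate gives $\|D\Phi_{y,t}(x)\| \leq C_1 C_2 |x|$ and $|\Phi_{y,t}(0)| \leq C_1 |y - f(0,t)|$, so one can choose $r > 0$, depending only on $C_1$, $C_2$ and an upper bound for $|f(0,t)|$, small enough that $\Phi_{y,t}$ is a $1/2$-contraction of $\overline{B_{r/2}(0)}$ into itself for every $y \in B_r(0)$, uniformly in $t$. The Banach fixed point theorem then produces the required local inverse $g(t,\cdot) : B_r(0) \to \overline{B_{r/2}(0)}$, with $B_{r/2}(0) \subset f(t)^{-1}(B_r(0))$ for every $t$.

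Finally, joint smoothness of $g$ on $[0,T] \times B_r(0)$ would follow by the standard implicit function theorem argument applied to $F(x,y,t) := f(x,t) - y$: since $F$ is jointly smooth in $(x,y,t)$ and $\partial_x F(g(t,y),y,t) = df(t)_{g(t,y)}$ is invertible with uniformly bounded inverse, repeated differentiation of the identity $F(g(t,y),y,t) \equiv 0$ expresses all mixed derivatives of $g$ as continuous expressions in derivatives of $f$ and $(\partial_x F)^{-1}$; smoothness in $t$ alone is then a special case. The main obstacle will be the bookkeeping in the preceding step --- verifying that $C_1$, $C_2$ truly combine to give a contraction radius independent of $t$, and that shrinking $\varepsilon$ does not enter a vicious circle --- but once these uniform estimates are locked in, the rest of the proof is a direct adaptation of the classical Banach-space argument.
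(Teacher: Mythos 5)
Your proof is correct and follows exactly the route the paper indicates: the paper's own proof consists solely of the remark that the result ``follows from the compactness of $[0,T]$ and the smoothness of $f$'', and your contraction-mapping argument with the uniform constants $C_{1}$, $C_{2}$ extracted by compactness, plus the implicit-function-theorem argument for joint smoothness of $g$, is precisely the elaboration of that remark. (Note only that your choice of $r$ really requires $f(0,t)=0$ --- or a recentering of the target $y$ --- rather than merely an upper bound on $\abs{f(0,t)}$; this normalization is implicit in the conclusion $B_{r/2}(0)\subset f(t)^{-1}\{B_{r}(0)\}$ and holds in the paper's application to the Riemannian exponential map.)
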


\begin{proof}
  The results follows from the compactness of $[0,T]$ and the smoothness of $f$.
\end{proof}

\end{document}